\documentclass[11pt]{article}

\usepackage{amsthm}
\usepackage{amsmath}
\usepackage{amsfonts, epsfig, amsmath, amssymb, color, amscd}
\usepackage[cmtip,arrow]{xy}
\usepackage{pb-diagram, pb-xy}
\usepackage{amssymb,epsfig}
\usepackage{color}
\usepackage[cmtip,arrow]{xy}
\usepackage{pb-diagram, pb-xy}
\usepackage{amssymb,epsfig,amsfonts}

\frenchspacing
\mathsurround=2pt
\emergencystretch=5pt
\tolerance=400
\voffset=-3cm
\hoffset=-17mm
\textwidth=16cm
\textheight=690pt

\righthyphenmin=2
\newfont{\sdbl}{msbm9}
\newfont{\dbl}{msbm10 at 12pt}
\theoremstyle{definition}
\newcommand{\da}{{\mbox{\dbl A}}}

\newcommand{\cl}{{{\cal L}}}
\newcommand{\cg}{{{\cal G}}}
\newcommand{\dpp}{{\mbox{\dbl P}}}
\newcommand{\dz}{{\mbox{\dbl Z}}}

\newcommand{\dg}{{\mbox{\dbl G}}}

\newcommand{\dq}{{\mbox{\dbl Q}}}

\newcommand{\ord}{\mathop{\rm ord}\nolimits}


\newcommand{\Pic}{\mathop {\rm Pic}}

\newcommand{\Quot}{\mathop {\rm Quot}}

\newcommand{\rk}{\mathop {\rm rk}}

\newcommand{\Spec}{\mathop {\rm Spec}}

\newcommand{\Proj}{\mathop {\rm Proj}}

\newcommand\limind{\mathop{\underrightarrow{\lim}}}

\makeatother
\newtheorem{defin}{Definition}[section]
\newtheorem{rem}{Remark}[section]

\theoremstyle{plain}
\newtheorem{prop}{Proposition}[section]
\newtheorem{thm}{Theorem}[section]
\newtheorem{lem}{Lemma}[section]
\newtheorem{cor}{Corollary}[section]
\newtheorem{claim}{Claim}[section]
\newtheorem{property}{Property}
\newtheorem{que}{Question}

\def\Spec{{{\rm Spec \,}}}

\def\Proj{{{\rm Proj \,}}}

\def\Quot{{{\rm Quot \,}}}

\def\Pic{{{\rm Pic \,}}}

\def\dim{{{\rm dim \,}}}

\def\ker{{{\rm ker \,}}}

\newcommand{\eqdef}{\stackrel{\rm def}{=}}

\newcommand{\Ord}{\mathop {\rm \bf ord}}
\newcommand{\Rk}{\mathop {\rm \bf rk}}
\newcommand{\co}{{{\cal O}}}
\newcommand{\cf}{{{\cal F}}}

\newcommand{\cm}{{{\cal M}}}

\newcommand{\cb}{{{\cal B}}}

\textwidth 6.0in \setlength{\oddsidemargin}{1.5cm}
\setlength{\evensidemargin}{1.5cm}

\title{Amazing	examples	of		nonrational
smooth	spectral	surfaces}
\author{Viktor Kulikov \quad Alexander Zheglov}

\date{}

\overfullrule5pt
\begin{document}

\maketitle

\quad \qquad \qquad
{\em Dedicated to A.~N.~Parshin on the occasion of his 75th birthday}

\begin{abstract}
In this paper we construct first examples of  smooth projective surfaces of general type satisfying the following conditions:
there are  1) an ample integral curve $C$ with $C^2=1$ and $h^0(X,\co_X(C))=1$; \quad 2) a divisor $D$ with $(D, C)_X=g(C)-1$, $h^i(X,\co_X(D))=0$, $i=0,1,2$, and $h^0(X,\co_X(D+C))=1$.

Such conditions arise from necessary and sufficient conditions for the existence of non-trivial commutative subalgebras of rank one in $\hat{D}$, a completion of the algebra of partial differential operators in two variables, which can be thought of as a simple algebraic analogue of the algebra of analytic pseudodifferential operators on a manifold. We extract these conditions by elaborating
the classification theorem of commutative subalgebras in $\hat{D}$ due to the second author for the case of rank one subalgebras. Amazingly, the commutative subalgebras with such spectral surfaces do not admit isospectral deformations.
\end{abstract}


\section{Introduction}

The  theory of commuting ordinary differential or difference operators appears in mathematical physics as an algebro-geometric tool in the theory of integrating non-linear soliton systems and the spectral theory of periodic finite-zone operators (see \cite{DMN1976}, \cite{DKN1985},  \cite{Krichever77}, \cite{KricheverNovikov2003}). An effective classification was offered in  \cite{Krichever77},  \cite{Krichever78} for differential case and in \cite{KricheverNovikov2003} for difference case (under certain natural restrictions). The theory of commuting partial differential operators or higher-dimensional difference operators is much more complicated and is not yet completed, though some elements of it appeared in the literature since the work \cite{Krichever77}. Namely, a lot of papers were devoted to the explicit constructions of commuting matrix differential operators, see e.g. \cite{Dubrovin83}, \cite{Grinevich86}, \cite{Nakayashiki91}, \cite{Nakayashiki94}, \cite{Mironov2002}, \cite{MironovNakayashiki2011}, \cite{Rothstein2004}. Non-trivial examples of commuting partial differential operators with {\it scalar} coefficients appeared in the deep theory of quantum Calogero-Moser systems and their deformations, see e.g. \cite{OP83}, \cite{HeckmanOpdam87}, \cite{Heckman91}, \cite{ChalykhVeselov90}, \cite{ChalykhVeselov93}, \cite{ChalykhVeselov98}, \cite{BerestEtigofGinzburg03}, \cite{FeiginJohnston14}, \cite{Ch} and references therein. Other examples obtained with the help of ideas from differential algebra or theory of $D$-modules see e.g. in \cite{Previato08}, \cite{BerestKasman98}.

Investigating the theory of commuting operators with scalar coefficients, the second author offered  in \cite{Zheglov2013} an analogue of the Krichever classification theorem for commutative subalgebras in a certain completion $\hat{D}$ of the algebra of partial differential operators in two variables. This completion can be thought of as a simple algebraic analogue of the algebra of analytic pseudodifferential operators on a manifold. It contains many important pseudodifferential operators that appear in diverse context in mathematical physics and analysis, among them are also difference operators, see section 2 (in particular, remark \ref{R:completion}). In the approach of \cite{Zheglov2013} the subalgebras in $\hat{D}$ appears quite naturally, in particular as isospectral deformations of subalgebras in $D$. The similarity of theories (the theory of commuting ordinary differential operators and the theory in dimension two) leads to a natural conjecture that subalgebras of rank one permit an easier description in algebro-geometric terms (than a description of higher rank algebras).  In particular, conjecturally there should be certain analogues of the explicit Krichever-Novikov formulas for rank one commuting ordinary differential or difference operators. This conjecture is justified by a number of explicit examples: first, by the examples arising in the theory of quantum Calogero-Moser systems (see the references above); second, by the examples obtained  with the help of the new approach from works \cite{Zheglov2013}, \cite{KurkeZheglov}, \cite{BurbanZheglov2017} (see \cite{ZheglovDiss} for the modern state of art). We start our paper with a statement extracting such easier description of rank one subalgebras from the general classification theorem in \cite{Zheglov2013} (first giving an appropriate definition of rank one subalgebras). Refining this result, we come to a simple notion of {\it pre-spectral data of rank one} which encodes rank one subalgebras in $\hat{D}$. This data consists of a projective surface $X$, ample $\dq$-Cartier divisor $C$ and a torsion free sheaf $\cf$ with a number of simple properties, see definition \ref{D:pre-spectral}.

Unlike the theory in dimension one, there are strong restrictions on the geometry of algebro-geometric spectral data of commutative subalgebras from $\hat{D}$.  In particular, in all examples from \cite{Zheglov2013}, \cite{KurkeZheglov}, \cite{BurbanZheglov2017} the spectral surfaces are {\it singular}. First parameter that divides the set of all possible spectral surfaces in two classes is the dimension of the following cohomology group: $h^0(X,\co_X(C))$. To explain its meaning, recall that there is a big class of rank one algebras which were called "trivial" in \cite{KurkeZheglov}. They are the algebras containing the operator $\partial_1$ or $\partial_2$, i.e. they consist of operators do not depending on $x_1$ or $x_2$. The examples of such algebras naturally arise from examples of commuting ordinary differential operators just by adding one extra derivation. The pre-spectral data corresponding to such subalgebras are characterised by the property $h^0(X,\co_X(C))\ge 2$, see section 4 (cf. \cite[Th.4.1]{KurkeZheglov}). In fact, the "trivial" algebras are not so trivial; investigation of their properties as well as of properties of their spectral data will appear in a separate paper.
First examples of explicit non-"trivial" algebras, which are deformations of well known rational Calogero-Moser systems (with singular spectral surface), obtained with the help of our classification theory appeared in \cite[\S 6]{BurbanZheglov2017}.

In this paper we investigate the question whether there are pre-spectral data of rank one with a {\it smooth} spectral surface {\it and} $h^0(X,\co_X(C))=1$. We construct first examples of such data with minimal possible genus of the divisor $C$; the surfaces are of general type. 

We leave for further investigations the following questions. 1) It would be interesting to find all possible smooth surfaces with such properties. We conjecture that for all such surfaces $q=p_g=0$. 2) It would be also interesting to find explicit commuting operators corresponding to such surfaces. 3) Other questions see at the end of the paper.

\bigskip

The paper is organized as follows. In the second section we give a review of the classification theory of commuting operators. At the end of section we prove a refined version of the classification theorem for commutative algebras of rank one.

In the third section we introduce the notion of pre-spectral data and show that it can be extended to a spectral data, thus reducing the problem of finding examples of commutative subalgebras in the algebra $\hat{D}$ to a purely algebro-geometric problem.

In the fourth section we investigate the question of existing {\it smooth spectral surfaces}. We recall the notion of "trivial" commutative subalgebras and show that smooth spectral surfaces with ample divisors of arithmetical genus less or equal to one lead to "trivial" commutative subalgebras. Then we show the existence of pre-spectral data with smooth spectral surfaces (of general type) and with smooth ample divisors of genus two. Amazingly the moduli space of spectral sheaves on such surfaces is finite, i.e. the corresponding commutative subalgebras do not have any isospectral deformations.

\section{Survey on the algebro-geometric spectral data}

In this section we elaborate the theory of commutative subalgebras  from \cite{Zheglov2013}, \cite[Ch. 3]{ZheglovDiss} for  {\it rank one} subalgebras (we refer to \cite[Sec. 2]{KurkeZheglov} for a review of this theory in generic case). Our aim is to prove a refined version of the classification theorem for commutative algebras of rank one, which is motivated by the following observation. 

Starting with the algebra of partial differential operators (PDO for short) $D=k[[x_1,x_2]][\partial_1,\partial_2]$, where $k$ is an algebraically closed field of characteristic zero,  one can define a completion $\hat{D}$. In \cite{Zheglov2013} it was shown that commutative $k$-subalgebras $B\subset \hat{D}$ satisfying certain mild conditions are classified in terms of certain geometric spectral data, and this classification is, in a sense, a natural generalisation of the Krichever classification of commuting ordinary differential operators. The classification of commuting ordinary operators is especially simple for  subalgebras of rank one, because any rank one commutative subalgebra of ordinary differential operators is essentially determined (up to a linear change of variables) by purely geometric spectral data consisting of a projective curve, a regular point on the curve, and a coherent torsion free sheaf of rank one over the curve.
We are going to explain in this section that the classification of commutative subalgebras in $\hat{D}$ of {\it rank one} (for an appropriately defined notion of rank) possess the analogous property.

We denote by $M$ the unique maximal ideal in the ring $k[[x_1,x_2]]$ and by $\ord_M$ the order function (or the discrete valuation defined by $M$) on this ring:
$$
\ord_M(a)=\sup \{n|a\in (x_1,x_2)^n\}.
$$
\begin{defin}
Define the ring
$$
\hat{D}_1=\{ a=\sum_{q\ge 0} a_q\partial_1^q |a_q\in k[[x_1,x_2]] \quad \mbox{and} \quad \sup\{q-\ord_M(a_q)\}<\infty\}.
$$
Define the {\it completion} of $D$ as $\hat{D}=\hat{D}_1[\partial_2]$.

We define also a kind of "pseudo-differential" ring $\hat{E}_+= \hat{D}_1((\partial_2^{-1}))$.
\end{defin}
\begin{rem}
\label{R:completion}
This definition differs from definition in \cite{Zheglov2013}(our $\hat{D}$ is $\hat{D}\cap \Pi_1$ in the notation from \cite{Zheglov2013}; $\hat{E}_+$ also differs correspondingly). Though this definition is not "symmetric" (there is a symmetric version of completion in \cite[Def. 5.1]{BurbanZheglov2017}), it is well adapted for the classification of commutative subalgebras. In particular, there is an analogue of the Schur theory \cite[Ch. 3]{ZheglovDiss}.

We'll call the elements from $\hat{D}$, $\hat{E}_+$ as operators. Operators from $\hat{D}$ act linearly on the space of  functions $k[[x_1,x_2]]$. For $P\in \hat{D}$ we'll denote this action by $P\diamond f$ or by $P(f)$. The algebra $D$ of partial differential operators (PDOs)  is a dense subalgebra in $\hat{D}$ (with respect to an appropriate topology). It contains also $k$-linear endomorphisms of $k[[x_1,x_2]]$, Dirac's delta-functions, integral operators (see \cite[Ex. 5.4, 5.5]{BurbanZheglov2017}) and difference operators (embedded e.g. through the embedding defined in \cite{MironovMaul}).
\end{rem}

Following the exposition of \cite[Sec. 5]{BurbanZheglov2017}, we introduce an analogue of the order function on $D$:
\begin{defin}
For any element
$
P = \sum\limits_{k_1,k_2 \ge \underline{0}} a_{\underline{k}} \underline{\partial}^{\underline{k}} \in \hat{D},
$
where $\underline{k}=(k_1,k_2)$, $\underline{\partial}^{\underline{k}} = \partial_1^{k_1} \partial_2^{k_2}$, we define its {\it order} to be
\begin{equation}
\label{E:LastOrder}
\Ord (P) := \sup\bigl\{k_1+k_2 - \ord_M(a_{\underline{k}})  \bigr\} \in \dz\cup \{ -\infty\}
\end{equation}
(we set $\Ord (0)=-\infty$).
Thus, if $d=\Ord (P)$, then we have:
$$
\ord_M(a_{\underline{k}})\ge k_1+k_2-d \quad \mbox{for any} \; a_{\underline{k}}.
$$
Note that for a partial differential  operator with constant highest symbol the order of $P$ taken in the sense \eqref{E:LastOrder} coincides with the usual definition of the order of a differential operator. The order function $\Ord$ induces a filtration on $\hat{D}$ and all its subalgebras, and we will denote by $gr(\cdot)$ the corresponding associated graded algebras.

Let $P \in \hat{D}$. Then we have  uniquely determined  $\alpha_{\underline{k}, \underline{i}} \in k$ such that
\begin{equation}\label{E:expOperatorP}
P = \sum\limits_{k_1, k_2, x_1, x_2 \ge \, 0} \alpha_{\underline{k}, \underline{i}} \,  \underline{x}^{\underline{i}} \underline{\partial}^{\underline{k}}.
\end{equation}
For any $m \ge -d$ we put:
$$
P_m:= \sum\limits_{(i_1+i_2) - (k_1+k_2) = m} \alpha_{\underline{k}, \underline{i}} \,  \underline{x}^{\underline{i}} \underline{\partial}^{\underline{k}}
$$
to be the $m$-th \emph{homogeneous component} of $P$. Note that $\Ord (P_m) = -m$ and we have a decomposition
$
P = \sum\limits_{m=-d}^\infty P_m.
$

Finally,  $\sigma(P) := P_{-d}$ is  the \emph{symbol} of $P$. We say that $P \in \hat{D}$ is \emph{homogeneous} if $P = \sigma(P)$.
\end{defin}

Unlike the usual ring of PDOs, the ring $\hat{D}$ contains zero divisors (see e.g. \cite[Ex. 5.4]{BurbanZheglov2017}). By this reason the order function and symbols have a little bit weaker properties.
\begin{lem}
\label{L:ord-propeties}
The following properties hold:
\begin{itemize}
\item
$\Ord (P\cdot Q) \le \Ord (P)+\Ord (Q)$, and the equality holds iff $\sigma (P)\cdot \sigma (Q)\neq 0$.
\item
$\sigma (P\cdot Q)= \sigma (P)\cdot \sigma (Q)$, provided $\sigma (P)\cdot \sigma (Q)\neq 0$.
\end{itemize}
\end{lem}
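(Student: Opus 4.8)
The plan is to recognise $\Ord$ as a $\dz$-grading on $\hat D$ that is respected by multiplication, and then to read off both assertions from the behaviour of the top graded piece alone. Write $|\underline{k}|=k_1+k_2$. The order of a single monomial $\underline{x}^{\underline{i}}\underline{\partial}^{\underline{k}}$ is $|\underline{k}|-|\underline{i}|$, since $\ord_M(\underline{x}^{\underline{i}})=|\underline{i}|$; I will call this integer its \emph{weight}.

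First I would record the normal-ordering identity in the Weyl algebra: for each index $s\in\{1,2\}$,
$$
\partial_s^{\,k_s}\,x_s^{\,j_s}=\sum_{r\ge 0}\binom{k_s}{r}\binom{j_s}{r}\,r!\;x_s^{\,j_s-r}\,\partial_s^{\,k_s-r}.
$$
The decisive observation is that every monomial on the right has weight $(k_s-r)-(j_s-r)=k_s-j_s$, independent of $r$. Hence, for two monomials $\mu=\underline{x}^{\underline{i}}\underline{\partial}^{\underline{k}}$ and $\nu=\underline{x}^{\underline{j}}\underline{\partial}^{\underline{l}}$, the product $\mu\nu$ is, after normal ordering, a combination of monomials all of weight $(|\underline{k}|-|\underline{i}|)+(|\underline{l}|-|\underline{j}|)=\Ord(\mu)+\Ord(\nu)$. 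Thus the weight grading turns $\hat D$ into a completed $\dz$-graded algebra whose homogeneous pieces are exactly the homogeneous operators, and the product of two homogeneous operators of orders $a$ and $b$ is homogeneous of order $a+b$ (or is $0$).

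Next I would promote this from monomials to arbitrary homogeneous operators by continuity: a homogeneous component $P_m$ is a sum of monomials of the single weight $-m$, so the previous step gives that $P'Q'$ is homogeneous of order $a+b$ for homogeneous $P',Q'$ of orders $a,b$. Here the defining finiteness condition $\sup\{q-\ord_M(a_q)\}<\infty$ of $\hat D$ is what guarantees that the infinite sums converge inside $\hat D$ and that, for each fixed target weight, only finitely many pairs of components contribute, so the graded decomposition of a product is well defined. With this in place the computation is immediate: setting $d=\Ord(P)$, $e=\Ord(Q)$ and decomposing $P=\sum_{m\ge -d}P_m$, $Q=\sum_{n\ge -e}Q_n$, we get $PQ=\sum_{m,n}P_mQ_n$ with $P_mQ_n$ homogeneous of order $-(m+n)\le d+e$. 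Equality $-(m+n)=d+e$ forces $(m,n)=(-d,-e)$, so the order-$(d+e)$ component of $PQ$ is precisely $P_{-d}Q_{-e}=\sigma(P)\sigma(Q)$ and all remaining terms have strictly smaller order. Consequently $\Ord(PQ)\le d+e$ always; if $\sigma(P)\sigma(Q)\neq 0$ the top component survives, yielding $\Ord(PQ)=d+e$ and $\sigma(PQ)=\sigma(P)\sigma(Q)$, whereas if $\sigma(P)\sigma(Q)=0$ it vanishes and $\Ord(PQ)<d+e$. This gives both bullets.

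I expect the only real obstacle to be the bookkeeping inside the completion: one must verify that the homogeneous decomposition is compatible with the infinite sums defining elements of $\hat D$, that each graded component of $PQ$ is a genuine element of $\hat D$ assembled from the individually convergent products $P_mQ_n$, and that no convergence or cancellation phenomenon produces a contribution of order exceeding $d+e$. Once the homogeneity of the Weyl relations from the first step is combined with the finiteness condition defining $\hat D$, the rest is formal.
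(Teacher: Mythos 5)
Your proof is correct, but it differs from the paper in an essential respect: the paper gives no argument at all for this lemma, deferring entirely to the proof of \cite[Th.~5.3]{BurbanZheglov2017}, whereas you supply a self-contained one. Your key observation --- that the Weyl relations $\partial_s^{k_s}x_s^{j_s}=\sum_{r}\binom{k_s}{r}\binom{j_s}{r}r!\,x_s^{j_s-r}\partial_s^{k_s-r}$ are homogeneous for the grading $\deg x_s=-1$, $\deg\partial_s=+1$, so that multiplication in $\hat{D}$ respects the weight decomposition $P=\sum_m P_m$ --- is exactly the right mechanism, and it is the same decomposition into homogeneous components that the paper (following \cite[Sec.~5]{BurbanZheglov2017}) sets up but never exploits in print. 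The bookkeeping you flag as the ``only real obstacle'' does close up: each $P_m$ lies in $\hat{D}$ with $\Ord(P_m)=-m$ (its coefficients of $\partial_1^q\partial_2^j$ are the degree-$(q+j+m)$ parts of the original coefficients, so the defining finiteness condition holds with bound $-m$); each product $P_mQ_n$ is then an element of $\hat{D}$ because $\hat{D}$ is a ring, and it is homogeneous of weight $-(m+n)$ since every monomial it contains has that weight; and for a fixed target weight $w$ the constraints $m\ge -\Ord(P)$, $n\ge -\Ord(Q)$, $m+n=-w$ leave only finitely many contributing pairs, so the regrouping of $PQ$ by weight is legitimate. Since no monomial of weight exceeding $\Ord(P)+\Ord(Q)$ can ever arise, cancellation can only lower the order, which gives the inequality; and since distinct weights cannot interfere, the top component of $PQ$ is exactly $\sigma(P)\sigma(Q)$, which gives both the equality criterion and the multiplicativity of symbols. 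What your route buys is precisely what the paper's citation hides: a reader can check the lemma without consulting the external reference, and the argument makes transparent that both bullets are formal consequences of $\hat{D}$ being a (completed) graded ring.
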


The proof of this lemma for the ring $\hat{D}$ is contained in the proof of \cite[Th. 5.3]{BurbanZheglov2017}.

The classification theorem from \cite{Zheglov2013} deals with commutative subalgebras from $\hat{D}$ satisfying certain mild special conditions (1-quasi-elliptic strongly admissible). To explain them, we need to recall the notions of order functions and of growth condition from \cite{Zheglov2013}.

\begin{defin}
\label{defin3}
We say  that a non-zero operator $P\in \hat{D}$ {\it has $\Gamma$-order} $\ord_{\Gamma} (P)=(k,l)$ if $P=\sum_{s=0}^lp_s\partial_2^s$, where $p_s\in \hat{D}_1$, $p_l\in k[[x_1,x_2]][\partial_1]=D_1$, and $\ord (p_l)=k$ (here $\ord$ is the usual order in the ring of differential operators $D_1$). In this situation we say that the operator $P$ is monic if the highest coefficient of $p_l$ is $1$.

The notion of $\Gamma$-order extends obviously to the ring $\hat{E}_+$. There is also the order {\it function} $\ord_2$ defined on $\hat{E}_+$ as
$
\ord_2(P)=l
$
if $P=\sum_{s=-\infty}^lp_s\partial_2^s$. The coefficient $p_l$ is called {\it the highest term} and will be denoted by $HT_2(P)$ (as the term naturally associated with the function $\ord_2$).
\end{defin}

Both orders  behave like the $\Ord$-function, namely
\begin{equation}
\label{E:order-properties}
\ord_{\Gamma}(P_1\cdot P_2)=\ord_{\Gamma}(P_1)+\ord_{\Gamma}(P_2); \quad \ord_2(P_1\cdot P_2)\le \ord_2(P_1)+\ord_2(P_2),
\end{equation}
see \cite[Lemma 2.8]{Zheglov2013} or \cite[S.3.3.1, L.14]{ZheglovDiss} for the proof of the first equality; the second inequality is obvious, moreover, we have $HT_2(P_1\cdot P_2)=HT_2(P_1)\cdot HT_2(P_2)$ provided $HT_2(P_1)\cdot HT_2(P_2)\neq 0$.

\begin{defin}
We say that an operator $Q=\sum q_{ij}\partial_1^{i}\partial_2^{j}\in \hat{E}_+$ {\it satisfies condition $A_{1}(m)$} if $\ord_M(q_{ij})\ge i+j-m$ for all $(i,j)$.

An operator $P\in \hat{D}$, $P=\sum p_{ij}\partial_1^{i}\partial_2^{j}$ with $\ord_{\Gamma} (P)=(k,l)$ {\it satisfies  condition $A_{1}$} if it satisfies $A_1(k+l)$.
\end{defin}

It is easy to see that the ring $\hat{D}$ consists of operators satisfying conditions $A_1(m)$ for some $m$. The subset
\begin{equation}
\label{F:Pi}
\Pi =\{P\in \hat{E}_+| \mbox{\quad $\exists $  $m\in \dz_+$ s. that $P$ satisfies $A_{1}(m)$}\} .
\end{equation}
is an associative subring with unity (see \cite[Corol.2.2]{Zheglov2013}).
The order function $\Ord$ extends obviously to $\Pi$ and this extension has the same properties from lemma \ref{L:ord-propeties}, as the same arguments show.

\begin{defin}
\label{D:elliptic}
The ring $B\subset \hat{D}$ of commuting operators is called quasi elliptic if it contains two monic operators $P,Q$ such that $\ord_{\Gamma} (P)= (0,k)$  and $\ord_{\Gamma} (Q)=(1,l)$ for some $k,l\in \dz_+$, $k\ge 1$.

The ring $B$ is called $1$-quasi elliptic if $P,Q$ satisfy the condition $A_{1}$. In this case  $\Ord (P)=k$, $\Ord (Q)=1+l$.
\end{defin}

The main features of $1$-quasi elliptic rings are that they are integral (see \cite[Th. 3.2]{Zheglov2013}) and are appropriate for the Schur theory \cite[Ch. 3]{ZheglovDiss}. The $\Gamma$-order is defined on all elements of such rings (see \cite[Lemma 2.3]{KurkeZheglov}), and thus the function $-\ord_{\Gamma}$ is a discrete valuation of rank two (i.e. a valuation with values in the group $\dz\oplus \dz$ with the anti-lexicographical order).
Moreover these rings satisfy the following important property (cf. \cite[Th. 2.1]{KOZ2014}).

\begin{lem}
\label{L:property}
Let $B$ be a $1$-quasi elliptic commutative subalgebra in $\hat{D}$. Then the natural map
$$
\Phi: gr(\hat{D}) \rightarrow gr(\hat{D})/x_1 gr(\hat{D})+x_2gr(\hat{D})\simeq k[\xi_1,\xi_2]
$$
induces an embedding of vector spaces on $gr(B)$. The $B$-module $F=\hat{D}/x_1\hat{D}+x_2\hat{D}\simeq k[\partial_1, \partial_2]$ (the {\it spectral module}) is torsion free.

In particular, the function $-\Ord$ induces a discrete valuation of rank one on $B$ and on its field of fractions $\Quot (B)$. Moreover, $\Ord (P)=k+l$, where $(k,l)=\ord_{\Gamma}(\sigma(P))$, $P\in B$.
\end{lem}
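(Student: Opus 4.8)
The plan is to reduce the whole statement to a single assertion, which I will call $(\star)$: \emph{for every nonzero $P\in B$ one has $\Ord(P)=k+l$, where $(k,l)=\ord_\Gamma(\sigma(P))$}. First I would make $gr(\hat D)$ and $\Phi$ explicit. Writing $P$ in the normal form \eqref{E:expOperatorP}, the graded ring $gr(\hat D)$ has $k$-basis the classes of the monomials $\underline x^{\underline i}\underline\partial^{\underline k}$, graded by $(k_1+k_2)-(i_1+i_2)$, and $\Phi$ simply deletes every monomial with $\underline i\neq\underline 0$. Identifying $F=\hat D/(x_1\hat D+x_2\hat D)\simeq k[\partial_1,\partial_2]$ with $k[\xi_1,\xi_2]$, the element $\Phi(\sigma(P))$ is then exactly the top total-degree component of the reduction $\bar P\in F$. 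An elementary bookkeeping shows that $\Ord(P)\le k+l$ always (the $\Gamma$-leading coefficient $\tilde c_P$ of $\sigma(P)$ is $\underline x$-homogeneous of degree $(k+l)-\Ord(P)\ge 0$), with equality if and only if $\tilde c_P$ is a nonzero constant, in which case $\Phi(\sigma(P))$ contains the monomial $\xi_1^{k}\xi_2^{l}$ and is nonzero. Hence proving $(\star)$ yields at once the formula of the lemma and the injectivity of $\Phi$ on $gr(B)$: a nonzero homogeneous element of $gr(B)$ is a symbol $\sigma(P)$ with $P\in B$, and $(\star)$ makes its image nonzero.

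The remaining assertions then follow formally. The valuation property I would establish first and independently: since $-\ord_\Gamma$ is a rank-two valuation on $B$ and extends to the symbols, $\ord_\Gamma(\sigma(P)\sigma(Q))=\ord_\Gamma(\sigma(P))+\ord_\Gamma(\sigma(Q))$ is finite, so $\sigma(P)\sigma(Q)\neq 0$; by Lemma \ref{L:ord-propeties} this gives $\Ord(PQ)=\Ord(P)+\Ord(Q)$ and $\sigma(PQ)=\sigma(P)\sigma(Q)$, and with $\Ord(P+Q)\le\max(\Ord(P),\Ord(Q))$ this shows $-\Ord$ is a discrete valuation of rank one on $B$, hence on $\Quot(B)$. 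Torsion-freeness of $F$ is then a leading-term computation: for $0\neq f\in k[\partial]$ lift it to a constant-coefficient operator $\tilde f$; reducing $\tilde f P$ modulo $(x_1,x_2)$ one checks that the top total-degree part of $f\cdot P=\overline{\tilde f P}$ equals the top part of the polynomial product $f\cdot\bar P$, and $\bar P\neq 0$ by $(\star)$. Since $k[\partial]$ is a domain this is nonzero, so $f\cdot P\neq 0$.

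The whole difficulty is therefore concentrated in $(\star)$, i.e. in propagating the elliptic normalization from the two distinguished generators to all of $B$. My approach would set $\tilde\nu(P):=(k+l)-\Ord(P)=\ord_M(\tilde c_P)\ge 0$ and prove $\tilde\nu\equiv 0$. Using $\sigma(PQ)=\sigma(P)\sigma(Q)$ together with multiplicativity of $\ord_\Gamma$ and of $\Gamma$-leading coefficients on symbols, $\tilde\nu$ is additive on $B\setminus\{0\}$; and on the two monic operators of Definition \ref{D:elliptic}, which I denote $P_0,Q_0$, one computes directly $\tilde\nu=0$ (their leading symbols are $\partial_2^{k}$ and $\partial_1\partial_2^{l}$). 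On the polynomial subring $B'=k[P_0,Q_0]$ (note $P_0,Q_0$ are algebraically independent, their $\Gamma$-orders $(0,k),(1,l)$ being linearly independent) this already forces $\tilde\nu\equiv 0$, since distinct monomials $P_0^aQ_0^b$ have distinct $\ord_\Gamma$, so no cancellation occurs in the $\Gamma$-leading coefficient of the symbol of any element of $B'$. To pass from $B'$ to $B$ I would use that $B$ is integral over $B'$. Given $0\neq P\in B$ with $P^n+c_{n-1}P^{n-1}+\dots+c_0=0$, $c_i\in B'$, I apply the rank-one valuation $-\Ord$: among the top-order terms the symbols must cancel, hence so must the $\Gamma$-leading coefficients $\tilde c_{c_i}\tilde c_P^{\,i}$ of those of maximal $\ord_\Gamma$; but the $M$-order of the $i$-th such coefficient equals $i\,\tilde\nu(P)$ (as $\tilde\nu(c_i)=0$), so if $\tilde\nu(P)>0$ the term of least index has strictly smallest $M$-order and cannot be cancelled, a contradiction. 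Hence $\tilde\nu(P)=0$.

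I expect the two genuinely delicate points to be, first, the geometric input that $B$ is integral (indeed finite) over $k[P_0,Q_0]$ — this is where the $1$-quasi-elliptic hypothesis and the structure theory of \cite{Zheglov2013} must be invoked, compare \cite[Th. 2.1]{KOZ2014} — and second, the verification that $\ord_\Gamma$ and the $\Gamma$-leading coefficient are well defined and multiplicative on the symbols $\sigma(P)$, $P\in B$; this is already implicit in the very formulation $\ord_\Gamma(\sigma(P))$ and rests on the fact, from \cite[Lemma 2.3]{KurkeZheglov}, that $\ord_\Gamma$ is defined on all of a $1$-quasi-elliptic ring. A more conceptual alternative to the integrality argument would be to conjugate $B$, via the Schur normalization of \cite[Ch. 3]{ZheglovDiss}, into an algebra with constant leading symbols; but the elementary valuation argument above seems to me the cleaner route.
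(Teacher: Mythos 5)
Your reduction to $(\star)$ is sound as bookkeeping, but the proof of $(\star)$ rests on the two inputs you yourself flag as "delicate points", and neither is actually available — indeed one of them is false as you state it. The first input is that the symbol $\sigma(P)$ of every $P\in B$ has a well-defined $\Gamma$-order and $\Gamma$-leading coefficient $\tilde c_P$, i.e.\ that the top $\partial_2$-coefficient of $\sigma(P)$ lies in $D_1$ (is a \emph{polynomial} in $\partial_1$). Every step of your plan uses this: the inequality $\Ord(P)\le k+l$, the nonvanishing $\sigma(P)\sigma(Q)\neq 0$ behind the valuation property, the additivity of $\tilde\nu$, and the cancellation analysis in the integrality step. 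You cite \cite[Lemma 2.3]{KurkeZheglov}, but that lemma concerns the elements of $B$ themselves, not their symbols, and symbols of elements of $\hat{D}$ need not have $\Gamma$-orders at all: the definition of $\hat{D}_1$ admits homogeneous infinite series such as $\sum_{q\ge 0}x_1^q\partial_1^{q+c}$, so the top $\partial_2$-coefficient of a symbol can fail to be a polynomial in $\partial_1$, and then $\tilde c_P$ simply does not exist. Ruling this out for a $1$-quasi-elliptic commutative $B$ is precisely the hard content of the lemma: it is what the paper's proof obtains by conjugating $B$ with the invertible Schur operator $S$ of \cite{Zheglov2013} (so that $SBS^{-1}$ has constant coefficients) and computing $HT_2(\sigma(Q))=c_1\partial_1^m$. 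The "more conceptual alternative" you set aside in your last sentence is therefore not an alternative route but the step that legitimizes all the others; assuming well-definedness is, in disguise, assuming a substantial part of the conclusion.

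The second input, integrality of $B$ over $B'=k[P_0,Q_0]$, is false for an arbitrary admissible choice of the monic pair: $B=k[\partial_1,\partial_2]$ is $1$-quasi-elliptic with the pair $P_0=\partial_2$, $Q_0=\partial_1\partial_2$ (both monic, of $\Gamma$-orders $(0,1)$ and $(1,1)$, satisfying $A_1$), yet $\partial_1$ is not integral over $k[\partial_2,\partial_1\partial_2]$: writing a purported integral relation inside the polynomial ring $k[\partial_1,\partial_2]$ and setting $\partial_2=0$ gives a vanishing monic polynomial $\partial_1^n+\sum_i c_i(0,0)\partial_1^i=0$ in $k[\partial_1]$, a contradiction. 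So your argument needs the existence of \emph{some} admissible pair over which $B$ is integral; this is a nontrivial statement (geometrically it amounts to producing two elements whose symbol zero-divisors on the spectral curve are disjoint), it is not contained in \cite[Th. 2.1]{KOZ2014}, and it is hard to see how to obtain it at this point of the theory, since the present lemma is exactly what makes the valuation $-\Ord$ and the spectral curve available. A further, repairable, slip: in the torsion-freeness step the claimed equality of the top total-degree part of $\overline{\tilde f P}$ with that of $f\cdot\bar P$ is false, because $x_1\hat{D}+x_2\hat{D}$ is only a one-sided ideal and reduction modulo it does not commute with multiplication (for $P=\partial_1^2-x_1\partial_1^3$ and $\tilde f=\partial_1$ one gets $\overline{\tilde f P}=0$ while $f\bar P=\xi_1^3$); granting the full strength of $(\star)$ the argument can be saved by tracking the single monomial $c\,\partial_1^{k}\partial_2^{l}$, which does survive reduction, rather than entire top-degree parts.
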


\begin{proof}
By \cite[Lemmas 2.9, 2.10, 2.11]{Zheglov2013} together with \cite[Corol. 3.1]{Zheglov2013} there exists an invertible operator $S\in \hat{E}_+$ satisfying the condition $A_{1}$ with $\ord_2(S)=0$ and with invertible $HT_2(S)\in \hat{D}_1$ such that $SBS^{-1}\subset \hat{E}_+$ is a subalgebra of operators with {\it constant} coefficients (in particular, $S\in \Pi$). More precisely, $S$ has the form $S=fS_1S_2$, where $f\in k[[x_1,x_2]]$ is an invertible function, $S_1\in \hat{D}_1$ is an invertible operator such that $[\partial_1,S_1]=0$, and $S_2=1+S_2^-$ with $S_2^-\in \hat{D}_1[[\partial_2^{-1}]]\partial_2^{-1}$.

Since $S$ is invertible, for any non-zero element $Q\in B$ we have $\Ord (SQS^{-1})=\Ord (Q)$. But the symbol of $SQS^{-1}$ has constant coefficients, therefore, the image $\sigma'(Q)$ of $\sigma (Q)$ in $k[\partial_1,\partial_2]$ is not zero.
Indeed, on the one hand side we have $HT_2(\sigma (S)\cdot \sigma (Q)\cdot \sigma (S)^{-1})=HT_2(\sigma (S))\cdot HT_2(\sigma (Q))\cdot HT_2(\sigma (S)^{-1})$. On the other hand, 
$HT_2(\sigma (S)\cdot \sigma (Q)\cdot \sigma (S)^{-1})=c_1\partial_1^m$, $\sigma (S)=c_2\cdot \sigma (S_1)\cdot \sigma (S_2)$ for some $c_i\in k^*$, and $HT_2(\sigma (S))=c_2\cdot \sigma (S_1)$. Thus, $HT_2(\sigma (Q))=c_2^{-1}\cdot \sigma (S_1)^{-1}\cdot c_1\partial_1^m \cdot c_2 \cdot \sigma (S_1)=c_1\partial_1^m $, i.e. $\sigma '(Q)\neq 0$ in $k[\partial_1,\partial_2]$. Thus, $gr(B)$ is embedded in $k[\partial_1,\partial_2]$ and $F$ is a torsion free $B$-module, as for any non-zero $f\in F$ $f\cdot S\neq 0$ and for any non-zero $b\in B$ $f\cdot b=fS^{-1}(SbS^{-1})S\neq 0 \mbox{\quad mod\quad } (x_1,x_2)$.
\end{proof}

For some $1$-quasi-elliptic rings it is possible to define a notion of rank.

\begin{defin}
\label{D:rank}
For a $1$-quasi-elliptic commutative ring $B\subset \hat{D}$ we define numbers $\tilde{N}_B$ and $N_B$ as
$$
N_B=GCD\{\ord_2(b):\quad b\in B, \quad \ord_{\Gamma}(b)=(0,\ord_2(b)), \quad \Ord (b)=\ord_2(b)\},
$$
$$
\quad \tilde{N}_B=GCD\{\Ord (b)|\quad b\in B\}
$$
We will say that the ring $B$ is {\it strongly admissible} if $\tilde{N}_B=N_B$. In this case we define the rank of $B$ as
$\rk(B)=N_B=\tilde{N}_B$.
\end{defin}

This definition is equivalent to \cite[Def. 3.5-3.8]{Zheglov2013} due to \cite[Th. 3.2]{Zheglov2013}. One of motivations to define $1$-quasi-elliptic strongly admissible rings was the following property of a sufficiently large class of commutative rings of PDEs (\cite[Lemma 2.6, Prop. 2.4]{Zheglov2013}): if $B$ contains two operators $P,Q$ such that the function $\sigma (P)^{\Ord Q}/\sigma (Q)^{\Ord P}$ is not a constant on $\dpp^1$, then after almost all linear changes of variables the ring $B$ becomes $1$-quasi-elliptic strongly admissible.

According to \cite[Th. 3.4]{Zheglov2013}, finitely generated commutative $1$-quasi-elliptic strongly admissible rings of rank $r$ are classified in terms of {\it geometric spectral data of rank $r$}  (we'll give below a simplified precise definition of  rank one data), which consists, in particular, of a projective surface (usually very singular), an ample irreducible $\dq$-Cartier divisor, a regular point on the divisor and on the surface, and a torsion free sheaf with some cohomological properties.

These geometric objects can be easily described: for a given ring $B$ define $\tilde{B}:=\bigoplus\limits_{n=0}^{\infty}B_ns^n$ to be the Rees algebra defined with respect to the filtration defined by $\Ord$. Then the {\it spectral surface} $X=\Proj \tilde{B}$, the {\it divisor} $C\simeq\Proj (gr (B))$ corresponds to the valuation $-\Ord$, the {\it spectral sheaf} $\cf =\Proj \tilde{F}$, where $\tilde{F}:=\bigoplus\limits_{n=0}^{\infty}F_ns^n$ is the Rees module defined with respect to the filtration defined by $\Ord$ and $\Proj$ means the sheaf associated with the graded module, and the {\it point} $p$ is the center of a discrete valuation of rank two $\nu_B$ defined as  $\nu_B (P):=(k,-\Ord)$, where $(k,l)=\ord_{\Gamma}(\sigma (P))$ for $P\in B$ (see the proof of theorem \ref{T:classification} below).

The spectral sheaf  plays the crusial role in this classification; it has the following general properties:
\begin{itemize}
\item
it is quasi-coherent, but may be not coherent \cite[Ex. 3.4]{KOZ2014};
\item
if it is coherent, its rank is greater or equal to the rank of the ring (=the rank of the data) \cite[Rem. 3.3]{KOZ2014} (there are some sufficient conditions on the ring of PDO's that guarantees the coherence of the spectral sheaf, see \cite[Prop. 3.3]{KOZ2014} combined with  \cite[Th. 2.1]{KOZ2014} or \cite[Th.18]{ZheglovDiss} for a version with corrected inaccuracies);
\item
it is coherent of rank {\it equal } to the rank of the ring iff the self-intersection index of the ample divisor is equal to the rank of ring \cite[Prop. 3.2]{KOZ2014}.
\end{itemize}

In case when the $B$-module $F$ from lemma \ref{L:property} is finitely generated, it has the following  natural interpretation, analogous to the case of rings of PDOs.

\begin{prop}
\label{P:spectral_module}
Let $B\subset \hat{D}$ be a finitely generated $1$-quasi-elliptic commutative subalgebra such that the module $F$ is finitely generated.

For any character $B \stackrel{\chi}\rightarrow k$ (i.e.~an algebra homomorphism), consider the vector space
\begin{equation}\label{E:solspace}
\mathsf{Sol}\bigl(B, \chi\bigr):= \Bigl\{f\in k[[ x_1, x_2]] \big| P\diamond f = \chi(P) f \; \mbox{\rm for all}\; P \in B\Bigr\}.
\end{equation}
Then there exists  a canonical isomorphism of vector spaces
\begin{equation}\label{E:solspaceisom}
F\big|_{\chi} := F \otimes_{B} \bigl(B/\mathrm{Ker}(\chi)\bigr) \cong \mathsf{Sol}\bigl(B, \chi\bigr)^\ast.
\end{equation}
assigning to a class $\overline{\partial_1^{p_1} \partial_2^{p_2}} \in F\big|_{\chi}$ the linear functional $f  \mapsto \left.\dfrac{1}{p_1! p_2!} \dfrac{\partial^{p_1+p_2}f}{\partial x_1^{p_1} \partial x_2^{p_2}}\right|_{(0, 0)}$ on the vector space $\mathsf{Sol}\bigl(B, \chi\bigr)$. In particular, $\dim_{k}\Bigl(\mathsf{Sol}\bigl(B, \chi\bigr)\Bigr)<\infty$  for any  $\chi$.
\end{prop}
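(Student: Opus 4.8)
The plan is to realize the asserted map as one side of a perfect pairing between $F|_{\chi}$ and $\mathsf{Sol}(B,\chi)$, and then to read off both the isomorphism and the finite-dimensionality from a single dimension count. The starting point is the evaluation pairing $\langle\,\cdot\,,\,\cdot\,\rangle\colon F\times k[[x_1,x_2]]\to k$, $\langle \overline{P},f\rangle:=(P\diamond f)(0)$. This is well defined on $F=\hat{D}/(x_1\hat{D}+x_2\hat{D})$ because every operator of the form $x_iQ$ acts as multiplication by $x_i$ followed by a differential operator, so its value at the origin vanishes; on the generator $\overline{\partial_1^{p_1}\partial_2^{p_2}}$ it returns $p_1!\,p_2!$ times the Taylor coefficient, i.e.~exactly the functional of the statement once the normalising factor $1/(p_1!p_2!)$ is absorbed into the identification $F\simeq k[\partial_1,\partial_2]$.

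The key formal property I would establish is the adjunction $\langle \overline{P}\cdot b,\,f\rangle=\langle \overline{P},\,b\diamond f\rangle$ for $b\in B$, which is immediate from associativity of the operator action $(Pb)\diamond f=P\diamond(b\diamond f)$, once one fixes the correct (right) $\hat{D}$-module structure on $F$ (the subspace $x_1\hat{D}+x_2\hat{D}$ is a right ideal). Consequently, for $f\in\mathsf{Sol}(B,\chi)$ we have $\langle \overline{P}\cdot b,f\rangle=\chi(b)\langle\overline{P},f\rangle$, so every $b\in\mathrm{Ker}(\chi)$ pairs trivially; the pairing therefore descends to $F\otimes_{B}(B/\mathrm{Ker}(\chi))=F|_{\chi}$ on the second factor $\mathsf{Sol}(B,\chi)$, producing the two candidate maps $F|_{\chi}\to\mathsf{Sol}(B,\chi)^{\ast}$ and $\mathsf{Sol}(B,\chi)\to (F|_{\chi})^{\ast}$.

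Non-degeneracy on the $\mathsf{Sol}$-side is free: if $f$ pairs to zero with every $\overline{\partial_1^{p_1}\partial_2^{p_2}}$ then all Taylor coefficients of $f$ vanish and $f=0$, giving an injection $\mathsf{Sol}(B,\chi)\hookrightarrow(F|_{\chi})^{\ast}$. Here the hypothesis that $F$ be finitely generated over $B$ enters decisively: since $B/\mathrm{Ker}(\chi)\cong k$, the space $F|_{\chi}=F/\mathrm{Ker}(\chi)F$ is a finitely generated $k$-module, hence finite dimensional, and the injection already yields $\dim_{k}\mathsf{Sol}(B,\chi)\le \dim_{k}F|_{\chi}<\infty$, which proves the final assertion. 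For the reverse inequality I would note that the Taylor-coefficient functionals separate the points of $\mathsf{Sol}(B,\chi)$ and each of them lies in the image of $F|_{\chi}\to\mathsf{Sol}(B,\chi)^{\ast}$; as $\mathsf{Sol}(B,\chi)$ is finite dimensional, a point-separating family spans its dual, so this map is surjective. Comparing the two inequalities forces $\dim_{k}F|_{\chi}=\dim_{k}\mathsf{Sol}(B,\chi)$ and upgrades surjectivity to the desired isomorphism.

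I expect the main obstacle to be the passage from the \emph{a priori} only one-sided pairing to a \emph{perfect} one: left non-degeneracy against $\mathsf{Sol}(B,\chi)$, i.e.~injectivity of $F|_{\chi}\to\mathsf{Sol}(B,\chi)^{\ast}$, is not visible directly and is obtained only through the finiteness of $\dim_{k}F|_{\chi}$ together with the dimension count above. The two subsidiary points that require care are the correct handling of the right-module structure on $F$ in the adjunction step (so that the pairing genuinely factors through $F|_{\chi}$) and checking that the explicit normalisation $1/(p_1!p_2!)$ is consistent with the chosen identification $F\simeq k[\partial_1,\partial_2]$.
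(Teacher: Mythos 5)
Your construction of the evaluation pairing, its $B$-adjunction, the descent of the pairing to $F|_{\chi}$, the finite-dimensionality of $F|_{\chi}$ from finite generation of $F$, and the injection $\mathsf{Sol}(B,\chi)\hookrightarrow (F|_{\chi})^{\ast}$ are all correct, and this duality-via-Taylor-coefficients mechanism is indeed the one behind the proof the paper invokes (the paper itself only cites \cite[Th.~4.5]{BurbanZheglov2017} here). But your final step contains a genuine gap: injectivity of $F|_{\chi}\to\mathsf{Sol}(B,\chi)^{\ast}$ does not follow from what you have established, because the ``two inequalities'' you compare are one and the same inequality. The injection $\mathsf{Sol}(B,\chi)\hookrightarrow(F|_{\chi})^{\ast}$ gives $\dim_k\mathsf{Sol}(B,\chi)\le\dim_k F|_{\chi}$, while the surjection $F|_{\chi}\twoheadrightarrow\mathsf{Sol}(B,\chi)^{\ast}$ gives $\dim_k F|_{\chi}\ge\dim_k\mathsf{Sol}(B,\chi)$ again: a surjection bounds the dimension of its source from \emph{below}, not from above. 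Nothing you prove excludes $\dim_k F|_{\chi}>\dim_k\mathsf{Sol}(B,\chi)$ with the surjection having a nonzero kernel. Indeed all three of your established facts (non-degeneracy on the $\mathsf{Sol}$ side, surjectivity onto the dual, finite dimensionality) hold for the toy pairing $k^2\times k\to k$, $\langle(a,b),c\rangle=ac$, where the induced map $k^2\to k^{\ast}$ is not injective. So the isomorphism --- the whole content of the proposition --- is not yet proved.

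What is missing is the reverse bound $\dim_k F|_{\chi}\le\dim_k\mathsf{Sol}(B,\chi)$, i.e.\ surjectivity of your injection $\mathsf{Sol}(B,\chi)\to(F|_{\chi})^{\ast}$: one must produce enough solutions, and that is where the real work of the proof lies. It can be done with the tools you already set up. Since the monomial classes form a $k$-basis of $F\simeq k[\partial_1,\partial_2]$ (lemma~\ref{L:property}), every functional on $F$ is represented by a power series: given $\psi\in(F|_{\chi})^{\ast}$, lift it to $\tilde\psi\in F^{\ast}$ vanishing on $F\cdot\mathrm{Ker}(\chi)$ and set $f:=\sum_{p_1,p_2\ge0}\tilde\psi\bigl(\overline{\partial_1^{p_1}\partial_2^{p_2}}\bigr)\,x_1^{p_1}x_2^{p_2}/(p_1!\,p_2!)\in k[[x_1,x_2]]$; then $\tilde\psi(\overline{P})=(P\diamond f)(0)$ for all $P$, as both sides are linear and agree on the basis. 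For any $b\in B$ the adjunction gives $\langle\overline{P},(b-\chi(b))\diamond f\rangle=\tilde\psi\bigl(\overline{P}\cdot(b-\chi(b))\bigr)=0$ for all $\overline{P}$, since $b-\chi(b)\in\mathrm{Ker}(\chi)$; your Taylor-coefficient non-degeneracy then forces $b\diamond f=\chi(b)f$, so $f\in\mathsf{Sol}(B,\chi)$ and $\psi$ is its image. This yields $(F|_{\chi})^{\ast}\cong\mathsf{Sol}(B,\chi)$, and double duality together with $\dim_k F|_{\chi}<\infty$ gives the proposition. Finally, your worry about the factor $1/(p_1!\,p_2!)$ is well founded: the map that actually descends to $F|_{\chi}$ is the pairing map $\overline{\partial_1^{p_1}\partial_2^{p_2}}\mapsto\bigl(f\mapsto(\partial_1^{p_1}\partial_2^{p_2}\diamond f)(0)\bigr)$ \emph{without} the factorials --- e.g.\ for $B=k[\partial_1,\partial_2]$, $\chi(\partial_i)=\lambda_i\ne0$, the class of $\partial_1^2$ equals $\lambda_1^2$ times the class of $1$ in $F|_{\chi}$, which is incompatible with the normalised functionals evaluated on $e^{\lambda_1x_1+\lambda_2x_2}$ --- so the factorials must indeed be absorbed into the identification $F\simeq k[\partial_1,\partial_2]$, exactly as you suggest.
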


The proof is verbally the same as in \cite[Th. 4.5 item 2]{BurbanZheglov2017}, cf. \cite[Rem. 2.3]{KOZ2014}.

Proposition \ref{P:spectral_module} permits to define another version of rank of a commutative subalgebra $B\subset \hat{D}$: let's denote by $\Rk (B)$ the rank of the module $F$. Having in mind the properties of the spectral sheaf mentioned above, we can now define what we'll mean as rank one commutative subalgebras in $\hat{D}$.

\begin{defin}
\label{D:rank1}
We will say that a commutative finitely generated subalgebra $B\subset \hat{D}$ is the {\it algebra of rank one} if it is $1$-quasi-elliptic strongly admissible and $\Rk (B)=1$.
\end{defin}

\begin{rm}
As we will see later in corollary \ref{C:ARR}, using the asymptotic Riemann-Roch theorem, the last property is equivalent to the following:
$$
\dim_k B_m\sim m^2/2,
$$
where $B_m=\{P\in B, \Ord(P)\le m\}$, and $\sim$ means that the function
$(\dim_k B_m) - m^2/2$ is a linear  in $m$ polynomial.
\end{rm}

Recall that commutative algebras were classified in \cite{Zheglov2013} up to the following  equivalence.

\begin{defin}
\label{D:rings}
The commutative $1$-quasi elliptic rings $B_1$, $B_2\subset \hat{D}$ are said to be equivalent if there is an invertible operator $S\in \hat{D}_1$ of the form $S=f+S^-$, where $S^-\in \hat{D}_1\partial_1$, $f\in k[[x_1,x_2]]^*$,  such that $B_1=SB_2S^{-1}$.
\end{defin}

In each equivalence class there exists a normalized ring, see \cite[Lemma 2.10]{Zheglov2013}.

\begin{defin}
\label{D:normalized}
We say that a commutative $1$-quasi-elliptic ring $B\subset \hat{D}$ is normalized if it contains a pair of operators $P,Q$ with $\ord_{\Gamma}(P)=(0,k)$, $\ord_{\Gamma}(Q)=(1,l)$ of the form
$$P=\partial_2^k+ \sum_{s=0}^{k-2}p_{s}\partial_2^{s} \mbox{\quad} Q=\partial_1\partial_2^l+ \sum_{s=0}^{l-1}q_{s}\partial_2^{s},$$ where $p_s,q_s\in \hat{D}_1$.
\end{defin}

By \cite[Rem. 2.11]{KurkeZheglov} two equivalent normalized rings differ by a special linear change of variables of the form
\begin{equation}
\label{E:special_change}
\partial_2\mapsto \partial_2+c\partial_1 +b, \mbox{\quad} \partial_1\mapsto \partial_1+d, \mbox{\quad} x_1\mapsto x_1-cx_2, \mbox{\quad} x_2\mapsto x_2
\end{equation}
with $c,b,d\in k$.\footnote{Note that linear changes $\partial_1\mapsto a\partial_1+b\partial_2$ or $x_i\mapsto x_i+b$ with $k\ni b\neq 0$ are not allowed  in the ring $\hat{D}$.}

Now let's explain what should be the corresponding spectral data of rank one.
In the case of  spectral data with {\it coherent} spectral sheaf of {\it rank one} the definition from \cite{Zheglov2013} can be simplified as follows (cf. \cite[\S 2.1]{KurkeZheglov}).
Let's introduce the following notation:
\begin{itemize}
\item
$T=\Spec k[[u,t]]\supset T_1=\Spec k[[u]]$
(defined by the equation $t=0$),
\item
$O=\Spec (k)\in T_1$,
\item
$R=k[[u,t]]$, $\cm =(u,t)\subset R$.
\end{itemize}

\begin{defin}
\label{D:altgeomdata}
A {\it coherent geometric data of rank $1$} is a triple $(X,j,\cf )$, where $X$ is an integral projective surface,
$$j:T\rightarrow X$$
is a dominant $k$-morphism and $\cf\subset j_*\co_T$ is a coherent subsheaf {\it of rank one} subject to the following conditions:
\begin{enumerate}
\item\label{GD1}
$j_*(T_1)=C\subset X$ is a curve\footnote{Notation: for a morphism of noetherian schemes $f:X\rightarrow Y$ and a closed subscheme $Z\subset X$, $f_*Z\subset Y$ is the closed subscheme defined by the ideal $\ker (\co_Y \stackrel{f^*}{\rightarrow}f_*\co_X\rightarrow f_*\co_Z)$}  (automatically integral),
and $p=j(O)$ is a point neither in the singular locus of $C$ nor of $X$.
\item\label{GD2}
$T_1\times_X\{p\}=\{O\}$, $T\times_XC=T_1$ (the fiber product is a subscheme of $T$ and $T_1$ is an effective Cartier divisor on $T$).
\item\label{GD3}
There exists an effective, very ample Cartier divisor $C'\subset X$ with cycle $Z(C')=dC$ and for all $n\ge 0$ the induced map (by the embedding $\cf \subset j_*\co_T$)
\begin{multline}
H^0(X,\cf (nC'))\rightarrow H^0(X,j_*\co_T(nC'))=H^0(T,\co_T(ndT_1))=\\
Rt^{-nd}\rightarrow Rt^{-nd}/\cm^{nd+1}t^{-nd}
\end{multline}
is an isomorphism.
\end{enumerate}
\end{defin}

\begin{rem}
\label{R:general_remark_to_geomdata}
Immediately from definition it follows that the curve $C$ does not belong to the singular locus of $X$. By \cite[Th. 3.2]{KOZ2014} the surface $X$ is Cohen-Macaulay along $C$. By \cite[Corol. 3.1]{KOZ2014} the sheaf $\cf$ is Cohen-Macaulay along $C$, hence it is locally free at all smooth points on $C$ belonging to the smooth locus of $X$.

By \cite[Theorem 3.4]{Zheglov2013} any coherent spectral data of rank one corresponds to a finitely generated commutative $k$-algebra of rank one in $\hat{D}$.

As it follows from \cite[Th.4.1]{Zheglov2013}, each commutative subalgebra in $\hat{D}$ can be extended to a Cohen-Macaulay subalgebra, i.e. we can assume additionally that $X$ is a Cohen-Macaulay (CM) surface. The spectral sheaf $\cf$ is known to be Cohen-Macaulay if $B\subset D$ \cite[Th. 3.1]{KurkeZheglov}. In general case it is not true, see example in  \cite[Rem. 6.2.1]{BurbanZheglov2017} (in terms of Schur pairs).

From the asymptotic Riemann-Roch theorem it follows that $C^2=1$, see \cite[Remark 2]{KurkeZheglov}.
\end{rem}

\begin{defin}
\label{D:geomdata_reduced}
We call $(X,C,p,\cf )$ a {\it reduced geometric data of rank $1$} if it consists of the following data:
\begin{enumerate}
\item\label{dat1}
$X$ is an integral projective surface;
\item\label{dat2}
$C$ is a reduced irreducible ample $\dq$-Cartier  divisor on $X$ and $X$ is Cohen-Macaulay along $C$;
\item\label{dat3}
$p\in C$ is a closed $k$-point, which is
regular on $C$ and on $X$;
\item\label{dat5}
$\cf$ is a torsion free coherent sheaf of rank one on $X$, which is Cohen-Macaulay along $C$ and subject to the following conditions.

Let $\hat{\co}_{X,p}\simeq k[[u,t]]$ be an isomorphism of local algebras chosen in such a way that $t$ corresponds to a formal local equation of $C$ at $p$.  Let
$\phi :\hat{\cf}_p \simeq  k[[u,t]]$ be a $\hat{\co}_{X,p}$-module isomorphism (trivialisation).
By item~\ref{dat2} there is the minimal natural number $d$ such that $C'=dC$ is a very ample Cartier divisor on $X$. Let $\gamma_n : H^0(X, \cf (nC'))\hookrightarrow {\cf}(nC')_p$ be an embedding (which is an embedding, since $\cf (nC')$ is a torsion free quasi-coherent sheaf on $X$).
Let $\epsilon_n : {\cf}(nC')_p \to \cf_p$ be the natural ${\co}_{X,p}$-module isomorphism  given by multiplication to an element $f^{nd} \in {\co}_{X,p}$, where $f \in {\co}_{X,p}$ is a local equation of $C$ at $p$. Let $\tau_n : k[[u,t]] \rightarrow k[[u,t]]/(u,t)^{nd+1}$ be the natural
ring epimorphism. We demand that the maps
$$    \tau_n \circ \phi \circ  \epsilon_n \circ \gamma_n \, : \, H^0(X, \cf (nC'))   \rightarrow   k[[u,t]]/(u,t)^{nd+1}$$
are  isomorphisms for all $n\ge 0$. (These conditions on the map $\phi$ do not depend on the choice of the trivialisation,   on the choice of the local algebras isomorphism with the given property, and on the choice of the appropriate element $f$.)
\end{enumerate}
\end{defin}

\begin{rem}
\label{R:small_rem}
If we fix an isomorphism $\hat{\co}_{X,p}\simeq k[[u,t]]$ and a trivialisation $\phi$ in definition \ref{D:geomdata_reduced}, then we obtain a geometric datum of rank one in the sense of \cite[Def. 3.10]{Zheglov2013}. This definition is equivalent to definition \ref{D:altgeomdata}, see \cite[\S 2.1.1]{KurkeZheglov}.
\end{rem}

\begin{defin}
Two reduced geometric data of rank $1$ $(X_1, C_1, p_1, \cf_1)$, $(X_2, C_2, p_2, \cf_2)$ are {\it isomorphic} if there is an isomorphism of surfaces $\beta :X_1\rightarrow X_2$ of surfaces and an isomorphism $\psi :\cf_2\rightarrow \beta_*\cf_1$ of sheaves on $X_2$ such that $\beta|_{C_1}: C_1\rightarrow C_2$ is an isomorphism of curves and $\beta (p_1)=p_2$.
\end{defin}

To formulate the main theorem of this section we need to introduce the following general form of a linear change of variables:
\begin{equation}
\label{E:generic_special_change}
\partial_2\mapsto a\partial_2+c\partial_1 +b, \mbox{\quad} \partial_1\mapsto e\partial_1+d, \mbox{\quad} x_1\mapsto e^{-1}x_1-cx_2, \mbox{\quad} x_2\mapsto a^{-1}x_2,
\end{equation}
where $a,e\in k^*$, $b,c,d\in k$.

\begin{thm}
\label{T:classification}
There is one-to-one correspondence between the set of commutative normalized finitely generated algebras of rank one up to linear changes of variables \eqref{E:generic_special_change} in $\hat{D}$ and the set of isomorphism classes of reduced geometric data of rank one.
\end{thm}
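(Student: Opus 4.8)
The plan is to reduce the statement to the already-established classification \cite[Th. 3.4]{Zheglov2013} and then carry out the two pieces of genuine refinement: (i) check that the Rees-algebra construction produces data satisfying every clause of Definition \ref{D:geomdata_reduced}, and (ii) match the equivalence ``normalized ring modulo the linear changes \eqref{E:generic_special_change}'' on the operator side with the equivalence ``isomorphism class'' on the geometric side. By Remark \ref{R:small_rem}, a reduced datum of Definition \ref{D:geomdata_reduced}, once we fix an isomorphism $\hat{\co}_{X,p}\simeq k[[u,t]]$ with $t$ a local equation of $C$ and a trivialization $\phi$ of $\hat{\cf}_p$, becomes a geometric datum of rank one in the sense of \cite[Def. 3.10]{Zheglov2013} (equivalently, of Definition \ref{D:altgeomdata}); conversely such a datum determines a reduced datum by forgetting those two choices. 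So \cite[Th. 3.4]{Zheglov2013} supplies a bijection on the level of ``datum-with-choices'' once I understand exactly what ambiguity the two forgotten choices introduce.

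\emph{Forward map.} Given a normalized rank-one algebra $B$, I would form the Rees algebra $\tilde B=\bigoplus_{n\ge0}B_ns^n$ for the $\Ord$-filtration and set $X=\Proj\tilde B$, $C\simeq\Proj \gr(B)$, $\cf=\widetilde{\tilde F}$ from the Rees module of the spectral module $F$, and let $p$ be the center of the rank-two valuation $\nu_B(P)=(k,-\Ord P)$, $(k,l)=\ord_{\Gamma}(\sigma(P))$. That $X$ is an integral projective surface follows from $B$ being a domain (the $1$-quasi-elliptic hypothesis); that $C$ is integral and reduced follows from the embedding $\gr(B)\hookrightarrow k[\partial_1,\partial_2]$ of Lemma \ref{L:property}; ampleness and the $\dq$-Cartier property are built into the $\Proj$ construction, and $X$ is Cohen--Macaulay along $C$ by \cite[Th. 3.2]{KOZ2014}. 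Since $\Rk(B)=1$, Corollary \ref{C:ARR} gives $C^2=1$, whence \cite[Prop. 3.2]{KOZ2014} makes $\cf$ coherent torsion free of rank one, Cohen--Macaulay along $C$ by \cite[Corol. 3.1]{KOZ2014}; regularity of $p$ on $X$ and $C$ is forced by $\nu_B$ being a rank-two valuation centered at a regular point, which is exactly the content of the $\ord_{\Gamma}$-valuation. The isomorphism clauses in item~\ref{dat5} of Definition \ref{D:geomdata_reduced} are then a restatement of the identification $H^0(X,\cf(nC'))\cong B_{nd}$ coming from the Rees construction together with the local triviality of $\cf$ at $p$, i.e. they are precisely Definition \ref{D:altgeomdata}(\ref{GD3}).

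\emph{Backward map and matching of quotients.} Given reduced data, I fix $\hat{\co}_{X,p}\simeq k[[u,t]]$ and $\phi$, invoke \cite[Th. 3.4]{Zheglov2013} to obtain a rank-one algebra, and normalize it via \cite[Lemma 2.10]{Zheglov2013} to land in the form of Definition \ref{D:normalized}. The two constructions are mutually inverse at the level of ``datum-with-choices $\leftrightarrow$ algebra modulo the conjugation equivalence of Definition \ref{D:rings}'' by \cite[Th. 3.4]{Zheglov2013}, so it remains to pass to quotients. By \cite[Rem. 2.11]{KurkeZheglov} two equivalent normalized rings differ by a special change \eqref{E:special_change}, i.e. by the parameters $b,c,d$; these fix $X,C,\cf$ but move the auxiliary choices, hence do not alter the isomorphism class of the reduced datum. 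The remaining scalings $a,e\in k^*$ in \eqref{E:generic_special_change} act by rescaling the formal parameters $u,t$ and the trivialization $\phi$, and I would check directly that this is exactly the ambiguity in the two forgotten choices (the isomorphism $\hat{\co}_{X,p}\simeq k[[u,t]]$ respecting the ideal $(t)$, and $\phi$ up to a unit), so that orbits under \eqref{E:generic_special_change} correspond bijectively to isomorphism classes of reduced data.

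The hard part will be this last matching step: showing that the action of the parameters $(a,e,b,c,d)$ on normalized rings is exactly transitive on the fibers of the ``forget the trivialization and the marking at $p$'' map. Concretely I must verify that no two normalized rings give isomorphic reduced data unless related by \eqref{E:generic_special_change}, and conversely that every isomorphism of reduced data is realized by such a change. This amounts to computing how each of $a,e,b,c,d$ transforms the normal forms $P=\partial_2^k+\sum_{s=0}^{k-2}p_s\partial_2^s$, $Q=\partial_1\partial_2^l+\sum_{s=0}^{l-1}q_s\partial_2^s$ of Definition \ref{D:normalized} and the induced local coordinates at $p$, and confirming that the two five-parameter families of ambiguities coincide. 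The scaling parameters $a,e$, absent from the earlier equivalence \eqref{E:special_change}, are the delicate new ingredient, since they are not conjugations by an operator $S\in\hat{D}_1$ but genuine coordinate rescalings that nonetheless preserve the isomorphism class of $(X,C,p,\cf)$.
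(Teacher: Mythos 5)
Your overall strategy (reduce to \cite[Th. 3.4]{Zheglov2013} plus the results of \cite{KOZ2014} and \cite[Rem. 2.11]{KurkeZheglov}, then match the equivalences) is the same as the paper's, but there are two genuine gaps in the execution.

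First, your route to coherence of the spectral sheaf is circular. You write: ``Since $\Rk(B)=1$, Corollary \ref{C:ARR} gives $C^2=1$, whence \cite[Prop. 3.2]{KOZ2014} makes $\cf$ coherent torsion free of rank one.'' But Corollary \ref{C:ARR} is proved \emph{after} Theorem \ref{T:classification}, and its proof of the direction you need ($\Rk(B)=1 \Rightarrow \dim_k B_m\sim m^2/2$, via $C^2=1$) invokes Theorem \ref{T:classification} itself (and Theorem \ref{T:Cartier}, which in turn rests on the notion of pre-spectral data built on top of the classification). You cannot use it here. Moreover, \cite[Prop. 3.2]{KOZ2014} relates coherence to the equality of $C^2$ with the rank $\rk(B)$ of Definition \ref{D:rank}, not with the module rank $\Rk(B)$ of Definition \ref{D:rank1}; bridging $\Rk(B)=1$ with coherence of $\cf=\Proj\tilde F$ and with $\rk(B)=1$ is precisely the nontrivial point, because a priori the Rees module $\tilde{F}$ need not be finitely generated over $\tilde{B}$ even when $F$ is finitely generated over $B$. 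The paper handles this by a separate argument (Lemma \ref{L:coherence}): take a finitely generated graded submodule $\tilde{F}'\subset\tilde{F}$ containing generators of $F$, pass to the Cohen--Macaulaysation $\cf_1=CM(\Proj\tilde F')$, use \cite[Rem. 2.6, 2.7, Lem. 2.1]{KurkeZheglov} to identify $H^0(X,\cf_1(nC'))\simeq W^1_{nd}$, and conclude that $\oplus_i W_i$ sits inside a finitely generated $\tilde A$-module, hence is finitely generated. Nothing in your proposal replaces this step.

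Second, the step you yourself flag as ``the hard part'' --- that the five parameters $(a,e,b,c,d)$ of \eqref{E:generic_special_change} account exactly for the ambiguity in the forgotten choices --- is deferred rather than proved, and it is not a routine verification on the normal forms of Definition \ref{D:normalized}. The paper settles it by working through the Schur-pair intermediary (which your proposal bypasses): a change of trivialisation preserving the condition on $t$ is an automorphism of $k[[u,t]]$ preserving the rank-two valuation $\nu$, and every such automorphism decomposes as $h$ with $h(u)=u \bmod (u^2)+(t)$, $h(t)=t \bmod (ut)+(t^2)$, composed with a scaling $u\mapsto c_1u$, $t\mapsto c_2t$; automorphisms of the first kind produce equivalent normalized rings differing by \eqref{E:special_change}, while the scalings are traced back through $\psi_1$ to the substitution $z_1\mapsto e^{-1}z_1$, $z_2\mapsto a^{-1}z_2$ on the Schur pair and hence to the extra parameters $a,e$ in \eqref{E:generic_special_change}. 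Without this decomposition (or an equivalent analysis), the bijection between orbits and isomorphism classes is asserted, not established. As a smaller point, your identification for item \ref{dat5} of Definition \ref{D:geomdata_reduced} should involve the filtered pieces $F_{nd}$ of the spectral module (i.e. $W_{nd}$), not $B_{nd}$; the identification $H^0(X,\co_X(nC'))\simeq B_{nd}$ concerns the structure sheaf.
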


\begin{proof} The proof of this theorem essentially follows from \cite[Th. 3.4]{Zheglov2013} combined with \cite[Cor. 3.1, Prop. 3.2, Prop.3.3]{KOZ2014} and \cite[Rem. 2.11]{KurkeZheglov}. For convenience we'll give main steps of the proof here, since we'll need them in the next section.

{\bf 1.} In one direction the correspondence is as follows.
Any linear change \eqref{E:generic_special_change} is a composition of a linear change (of type) \eqref{E:special_change} and of a linear change $\partial_1\mapsto e \partial_1$, $\partial_2\mapsto a \partial_2$.
As it was already  mentioned above (\cite[Rem. 2.11]{KurkeZheglov}), any commutative normalized finitely generated algebra of rank one $B$ up to linear changes of variables \eqref{E:special_change} belongs to the same equivalence class. Then by \cite[Th. 3.2]{Zheglov2013} it uniquely corresponds to an equivalence class of a {\it Schur pair} $(A,W)$, a pair of subspaces in the ring $k[z_1^{-1}]((z_2))$ such that $A$ is an algebra isomorphic to $B$ and $W$ is a $A$ module isomorphic to $F$\footnote{The Schur pairs in dimension one were introduced in \cite{Mu1} (see also \cite{Mu2} for a review), where the classification of commutative algebras of ODO's was rewritten in their terms.  }. The proof of \cite[Th. 3.2]{Zheglov2013}  is constructive; the spaces $A$ and $W$ are obtained as follows: $A=S^{-1}BS$, $W=F\cdot S$, where $S$ is a monic operator of special type satisfying the condition $A_1$. It is defined by a pair of normalized operators from $B$ (see \cite[\S 2.3.4]{Zheglov2013}) using the analogue of Schur's theorem in dimension one (see \cite[Lemma 2.11]{Zheglov2013}). If one chooses another pair of normalized operators from $B$, then the resulting Schur pair will be equivalent to the first one. The linear change $\partial_1\mapsto e \partial_1$, $\partial_2\mapsto a \partial_2$ induces the isomorphism $z_1\mapsto e^{-1}z_1$, $z_2\mapsto a^{-1}z_2$ on the ring $k[z_1^{-1}]((z_2))$ and on the corresponding Schur pair. We note also that the Schur pair $(A,W)$ is strongly admissible in the sense of \cite[Def.3.12]{Zheglov2013}, i.e. it fulfils conditions of \cite[Th.3.3]{Zheglov2013}, since $B$  is strongly admissible.

The Schur pairs from \cite[Th. 3.2]{Zheglov2013} one to one correspond to pairs of subspaces in the space $k[[u]]((t))$ via an isomorphism
\begin{equation}
\label{psi_1}
\psi_1:k[z_1^{-1}]((z_2))\cap \Pi\simeq k[[u]]((t)) \mbox{\quad } z_2\mapsto t, z_1^{-1}\mapsto ut^{-1},
\end{equation}
where $k[z_1^{-1}]((z_2))\cap \Pi$ denotes the $k$-subspace generated by series satisfying the condition $A_1$ (see \cite[Cor.3.3]{Zheglov2013}). We will denote these pairs by the same letters $(A,W)$. Clearly, $A\cdot W\subset W$.

The space $k[[u]]((t))$ is a subspace of the two-dimensional local field $k((u))((t))$, on which the following discrete valuation of rank two $\nu \, : \, k((u))((t))^* \to \dz \oplus \dz $ is defined:
$$
\nu (f) = (m,l)  \quad \mbox{iff} \quad f= t^lu^m f_0 \mbox{, where} \quad f_0 \in k[[u]]^*+t k((u))[[t]] \mbox{.}
$$
(Here $k[[u]]^*$ means the set of invertible elements in the ring $k[[u]]$.) We also define the discrete valuation of rank one
$$
\nu_t(f)=l.
$$
The valuation $\nu_t$ induces a filtration on $A,W$, and we denote by $\tilde{A}$, $\tilde{W}$ the associated Rees algebra and Rees module correspondingly.

By \cite[Th. 3.3]{Zheglov2013} the Schur pair $(A,W)$ uniquely corresponds to algebro-geometric datum consisting of the integral projective surface $X\simeq \Proj \tilde{A}\simeq \tilde{B}$, the reduced irreducible ample $\dq$-Cartier divisor $C\simeq \Proj (gr(A))\simeq \Proj (gr(B))$, the regular point $p\in C$ which is the center of the valuation $\nu$ on $A$ (or, equivalently, the center of the valuation $\nu_B$ on $B$ defined as $\nu_B(P)=(k-\Ord (P))$, where $(k,l)=\ord_{\Gamma}(\sigma (P))$), the local $k$-algebra embedding $\pi: \hat{\co}_{X,p}\rightarrow k[[u,t]]$, the (quasi-coherent) spectral sheaf $\cf\simeq \Proj (\tilde{W})\simeq \Proj \tilde{F}$ together with an $\co_{X,p}$-module embedding $\phi :\cf_p \hookrightarrow k[[u,t]]$ such that the natural maps (defined in the same manner as in definition \ref{D:geomdata_reduced}) $H^0(X,\cf (nC')) \rightarrow k[[u,t]]/(u,t)^{ndr+1}$, where $r=rk (B)$, are isomorphisms for all $n\ge 0$. Clearly, the isomorphism $z_1\mapsto e^{-1}z_1$, $z_2\mapsto a^{-1}z_2$ composed with $\psi_1$ preserves the valuation $\nu$. Hence, the algebro-geometric datum corresponding to a Schur pair obtained after applying this isomorphism will have isomorphic surface, divisor, point and sheaf, but another embeddings $\pi$ and $\phi$ (i.e. the "local coordinates" $u,t$ will be changed). Note that the resulting {\it datum} will not be  isomorphic, in general, to the  original datum, but the isomorphisms from definition  \ref{D:geomdata_reduced}, item 4 will hold for both data. Thus, if we show that the spectral sheaf  is coherent of rank one, then the {\it reduced geometric data} will be isomorphic.

Let's show that $\cf$ is coherent and $r=rk (B)=1$. Recall that to each geometric data $(X,C,p,\cf_1,\pi ,\phi_1 )$, where $X,C,p,\pi$ are as above, and $\cf_1$ is a torsion free sheaf endowed with a $\co_{X,p}$-module embedding $(\cf_1)_p\hookrightarrow k[[u,t]]$ (e.g. for rank one Cohen-Macaulay sheaves, see \cite[Rem. 2.5]{KurkeZheglov}), one can attach a  pair of subspaces (analogue of the Schur pair)
$$W^1,A\subset k[[u]]((t)),$$
where $A$ is a filtered subalgebra of $k[[u]]((t))$ and $W^1$ a filtered module over it,
as follows (cf. \cite[\S 2.2, 2.4]{KurkeZheglov}; this construction has its origin in \cite{Parshin2001}, \cite{Pa} and was later elaborated in \cite{OsipovZheglov}, \cite{Osipov}, \cite{Zheglov2013}):

Let $f^d$ be a local generator of the ideal $\co_X(-C')_p$, where $C'=dC$ is a very ample Cartier divisor. Then $\nu (\pi (f^d))=(0,r^d)$ in the ring $k[[u,t]]$ and therefore  $\pi (f^d)^{-1}\in k[[u]]((t))$. So, we have natural embeddings for any $n >0$
$$
H^0(X, \cf_1 (nC'))\hookrightarrow {\cf_1 (nC')}_p\simeq f^{-nd} ({\cf_1}_p) \hookrightarrow k[[u]]((t)) \mbox{,}
$$
where the last embedding is the embedding $f^{-nd}{\cf_1}_p \stackrel{\phi }{\hookrightarrow } f^{-nd} k[[u,t]] {\hookrightarrow} k[[u]]((t))$. Hence we have the embedding
$$
\chi_1 \; : \; H^0(X\backslash C, \cf_1 )\simeq \limind_{n >0} H^0(X, \cf_1 (nC')) \hookrightarrow k[[u]]((t)) \mbox{.}
$$
We define $W^1 \eqdef \chi_1(H^0(X\backslash C, \cf_1))$. Analogously the embedding $H^0(X\backslash C, \co )\hookrightarrow k[[u]]((t))$ is defined (and we'll denote it also by $\chi_1$). We define $A \eqdef \chi_1(H^0(X\backslash C, \co ))$.

The filtration on $W^1,A$ is the filtration induced by the filtration $\nu_t$, namely
$$
A_n = A \cap {t}^{-nr}k[[u]][[t]], \mbox{\quad} W_n^1 = W^1 \cap {t}^{-nr}k[[u]][[t]]
$$

\begin{lem}
\label{L:coherence}
Let $B\subset \hat{D}$ be a commutative normalized finitely generated algebra of rank one, and let $F$ be its spectral module. Then the sheaf $\cf=\Proj \tilde{F}$, where $\tilde{F}=\oplus_{i=0}^{\infty}F_i\cdot s^i$, is coherent of rank one and $\rk (B)=1$.
\end{lem}

\begin{proof} Denote by $W$ the subspace  corresponding to $F$.
Let $F$ be generated by the elements $f_1,\ldots ,f_m$ as $B$-module. Denote by $f_{1,s_1}, \ldots ,f_{m,s_m}$ the corresponding homogeneous elements in $\tilde{B}$, where $s_i=\Ord (f_i)$. Consider the graded $\tilde{B}$-submodule $\tilde{F}'$ of the module $\tilde{F}$ generated by the elements $s, f_{1,s_1}, \ldots ,f_{m,s_m}$. Note that $H^0(X\backslash C,\cf )\simeq \tilde{F}_{(s)}\simeq F$. Thus, the sheaf $\cf'=\Proj \tilde{F}'$ is a coherent torsion free sheaf of rank one (since $\Rk B=1$). Consider the Cohen-Macaulaysation sheaf $\cf_1=CM(\cf')$ (see e.g. \cite[Rem.5.2]{KOZ2014}). It is also a coherent torsion free sheaf of rank one which contain $\cf'$ as a subsheaf. In particular, there is the extension of the $\co_{X,p}$-module embedding $\phi |_{\cf'}: (\cf')_p \hookrightarrow k[[u,t]]$ (induced by the embedding $\phi : (\cf )_p \hookrightarrow k[[u,t]]$) onto the module $(\cf_1)_p$. Denote by $W^1$ the subspace corresponding to $\cf_1$ with respect to this embedding. Then $W^1\supset W$ and $W^1_{nd}\supset W_{nd}$ for all $n\ge 0$.
By \cite[Rem. 2.6, 2.7]{KurkeZheglov} combined with \cite[Lem. 2.1]{KurkeZheglov} we have $H^0(X,\cf_1 (nC'))\simeq W^1_{nd}$ for all $n\ge 0$. In particular, it follows that the graded $\tilde{A}$-module $\oplus_{i=0}^{\infty} W_i$ is a submodule of the finitely generated $\tilde{A}$-module $\oplus_{i=0}^{\infty} W_i^1$. Thus, it is finitely generated and $\cf$ is a coherent sheaf of rank one.

At last, $rk (B)=1$ by \cite[Rem. 3.3]{KOZ2014}.
\end{proof}

The surface $X$ is Cohen-Macaulay along $C$ by \cite[Th. 3.2]{KOZ2014}. The sheaf $\cf$ is Cohen-Macaulay along $C$ by \cite[Corol. 3.1]{KOZ2014}. Thus, any given normalized finitely generated algebra of rank one up to linear change \eqref{E:generic_special_change} determines a reduced geometric data of rank one up to an isomorphism.

{\bf 2.} In the other direction the correspondence is more simple.  If we choose some trivialization $\phi :\hat{\cf}_p\simeq \hat{\co}_{X,p}\simeq k[[u,t]]$, where $t$ corresponds to a formal local equation of $C$ at $p$, we obtain an algebro-geometric datum  from \cite[Th. 3.3]{Zheglov2013} (we note that not every trivialisation gives a datum in the sense of \cite{Zheglov2013}, the condition on $t$ is important). By this theorem, it corresponds to a Schur pair of rank one, which corresponds to a normalized finitely generated algebra of rank one by a generalized Sato theorem \cite[Th. 3.1]{Zheglov2013}: $B:=SAS^{-1}$, where $S$ is the Sato operator from the theorem uniquely determined by the space $W$ of the Schur pair.

Another trivialisation (satisfying the condition on $t$) differs from the chosen one by an automorphism of $k[[u,t]]$ preserving the valuation $\nu$. Each such a trivialisation is a composition of an automorphism $h$ of the form
$$
h(u)=u \mbox{\quad mod \quad} (u^2)+(t), \mbox{\quad} h(t)=t \mbox{\quad mod \quad} (ut)+(t^2),
$$
and an automorphism of the form $u\mapsto c_1u$, $t\mapsto c_2t$, $c_1,c_2\in k^*$. Applying the automorphism of the first form we obtain isomorphic algebro-geometric datum (cf. \cite[Def. 2.4, Rem. 2.4]{KurkeZheglov}), which leads to the equivalent normalized finitely generated algebra of rank one, which differs from the first one by a linear change \eqref{E:special_change}.

Going back through the equivalences described above in step {\bf 1} it is easy to see that the change of local coordinates at $p$ of the form $u\mapsto c_1u$, $t\mapsto c_2t$ will lead to the Schur pair obtained by applying the same isomorphism $u\mapsto c_1u$, $t\mapsto c_2t$ to the original Schur pair. Then the resulting algebra will be the algebra obtained by applying the isomorphism of the form \eqref{E:generic_special_change}.

\end{proof}

\section{Pre-spectral data and pre-Schur pairs}

In this section we give a refinement of constructions from previous section. In particular, we show that in some cases conditions from definition \ref{D:geomdata_reduced} can be reformulated in simple algebro-geometric terms.

\begin{defin}
\label{D:pre-spectral}
We call $(X,C,\cf )$ {\it pre-spectral data of rank one} if it consists of the following data
\begin{enumerate}
\item
$X$ is a reduced irreducible projective algebraic surface over $k$.
\item
$C$ is a reduced irreducible Weil divisor not contained in the singular locus of $X$, which is also an ample $\dq$-Cartier divisor.
\item
$X$ is Cohen-Macaulay along $C$.
\item
$\cf$ is a coherent torsion free sheaf of rank one on $X$, which is Cohen-Macaulay along $C$, and such that
$$
h^0(X, \cf (nC'))=\frac{(nd+1)(nd+2)}{2}
$$
for $n\ge 0$, and $h^0(X, \cf (nC'))=0$ for $n<0$, where $C'=dC$ is an ample Cartier divisor on $X$.
\end{enumerate}
\end{defin}

\begin{rem}
\label{R:remark}
First let's note that, taking an appropriate trivialisation $\hat{\cf}_p\simeq k[[u,t]]$ of the spectral sheaf from definition \ref{D:geomdata_reduced}, the subspaces of the Schur pair will belong to $K\subset k[[u]]((t))$, where $K$ is the field of rational functions on $X$ embedded via an isomorphism from definition \ref{D:geomdata_reduced}, item 4. Namely, $W,A\subset \mathop{\limind}\limits_{n >0} f^{-n}\co_{X,p} \subset K$, and this embedding does not depend on the choice of the  local generator $f$. Analogous subspaces can be defined for any point $Q\in C$ regular on $C$ and $X$, if we choose a trivialisation $\cf_Q\simeq \co_{X,Q}$. As it was noticed in \cite[\S 2.4]{KurkeZheglov} (see also the proof of theorem \ref{T:classification}), a construction of subspaces $W,A$ with similar properties can be defined  for any coherent torsion free sheaf of rank one which is locally free at a dense open subset of $C$. In particular, such subspaces  are defined also for  pre-spectral data of rank one.

A pair of subspaces associated with a pre-spectral data and a regular on $X$ and $C$ point $Q$, as it was described above,  will be called a {\it pre-Schur pair $(A_Q, W_Q)$ associated with the pre-spectral data of rank one}.
\end{rem}

Let $(X,C,\cf )$ be  pre-spectral data of rank one. For any point $Q$ of the curve $C$, which is regular on $C$ and on $X$,  in \cite[\S 2.4]{KurkeZheglov} there were defined  torsion free sheaves $\cf_i,\cb_i$, $i\in \dz$ as $\cf_i=\Proj (\widetilde{W}_Q(i))$, $\cb_i=\Proj (\widetilde{A}_Q(i))$ 
with the properties: $\cb_i\subset \cb_{i+1}$, $\cf_i\subset \cf_{i+1}$, $\cf_0\simeq \cf$, $\cf_{id}\simeq \cf (iC')$, $\cb_{id}\simeq \co_X (iC')$ for any $i\in \dz$, $\cb_i|_C\simeq \cb_i/\cb_{i-1}$, $(\cf_i)|_C\simeq \cf_i/\cf_{i-1}$\footnote{we mean here the pull-backs of the factor-sheaves on $C$}. By \cite[Lemma 2.1, Remark 2.7]{KurkeZheglov} $(A_Q)_{nd}\simeq H^0(X,\co_X(nC'))$ and $(W_Q)_{nd}\simeq H^0(X,\cf (nC'))$; in particular, $(A_Q)_0\simeq (W_Q)_0\simeq k$. Since the subspaces $(W_Q)_i$ are defined with the help of the discrete valuation $\nu_C$, the sheaves $\cb_i, \cf_i$ defined by different points are canonically isomorphic.


\begin{thm}
\label{T:Cartier}
Let $(X,C,\cf )$ be a pre-spectral data of rank one. Then $C$ is a Cartier divisor. 
\end{thm}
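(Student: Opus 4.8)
The plan is to derive the invertibility of $\co_X(C)$ from the filtration sheaves $\cb_i,\cf_i$ attached to a pre-Schur pair $(A_Q,W_Q)$ of the data, combined with the exact Hilbert function in item~\ref{dat5} of Definition~\ref{D:pre-spectral}. Since a Weil divisor lying in the smooth locus of a surface is automatically Cartier, and $C$ meets $\Sing X$ in at most finitely many points (being reduced and not contained in $\Sing X$, it is generically smooth and generically inside the regular locus of $X$), it suffices to show that the rank-one torsion free sheaf $\co_X(C)=\cb_1$ is locally free at each point $q\in C\cap\Sing X$. At such a $q$ the curve $C$ is regular, $X$ is Cohen--Macaulay (item~3), and $\cb_1$ is a maximal Cohen--Macaulay $\co_{X,q}$-module of rank one; the goal is to prove it is free of rank one there.

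First I would extract the numerics from the Hilbert function. Recall $\cf_{id}\simeq\cf(iC')$, $\cb_{id}\simeq\co_X(iC')$, and that $\cf_i|_C\simeq\cf_i/\cf_{i-1}$, $\cb_i|_C\simeq\cb_i/\cb_{i-1}$ are line bundles on $C$ with $\cb_i|_C\simeq(\cb_1|_C)^{\otimes i}$. Since $\cb_d|_C\simeq\co_X(C')|_C$ has degree $C'\cdot C=dC^2=d$ (using $C^2=1$ from asymptotic Riemann--Roch, cf.\ Remark~\ref{R:general_remark_to_geomdata}), the line bundle $L:=\cb_1|_C$ has $\deg L=1$. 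From the short exact sequences $0\to\cf_{i-1}\to\cf_i\to\cf_i|_C\to0$ and Riemann--Roch on the integral curve $C$ one gets $\chi(\cf_i)-\chi(\cf_{i-1})=\deg(\cf_i|_C)+1-g(C)=\deg(\cf|_C)+i+1-g(C)$. By ampleness of $C'$ and Serre vanishing, for $n\gg0$ one has $h^0(X,\cf(nC'))=\chi(\cf(nC'))$, so the given value $\tfrac{(nd+1)(nd+2)}{2}$ equals $\chi(\cf(nC'))$ for all $n$. Comparing the increment over one block $j\in\{(n-1)d+1,\dots,nd\}$ on both sides forces $\deg(\cf|_C)=g(C)$, hence $\chi(\cf_i)=\tfrac{(i+1)(i+2)}{2}$ and $a_i:=\chi(\cf_i)-\chi(\cf_{i-1})=i+1$; together with $h^0(X,\cf(nC'))=0$ for $n<0$ this pins down $h^0(X,\cf_i)$ and shows the restriction maps $H^0(X,\cf_i)\to H^0(C,\cf_i|_C)$ are surjective, i.e.\ the filtration of $W_Q$ has no superfluous jumps.

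The engine for passing from $C$ to $X$ is a Nakayama-type criterion: if $g\in\co_{X,q}$ is a nonzerodivisor and the MCM rank-one module $\co_X(C)_q$ has $\co_X(C)_q/g\,\co_X(C)_q$ free of rank one over $\co_{X,q}/(g)$, then lifting a generator shows $\co_X(C)_q$ is itself free. Here I would take $g$ to be a local equation of the genuine Cartier divisor $C'=dC$; then $g\,\co_X(C)=\co_X(C-C')=\cb_{1-d}$, so that $\co_X(C)/g\,\co_X(C)=\cb_1/\cb_{1-d}$ is, as an $\co_{C'}$-module, a successive extension of the $d$ line bundles $\cb_i/\cb_{i-1}=(\cb_1|_C)^{\otimes i}$ on $C$. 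The exact dimension count of the previous paragraph, applied on the structure-sheaf side, shows that at $q$ this module is generated by a single element over $\co_{C',q}=\co_{X,q}/(g)$, hence invertible there; the criterion then yields that $\cb_1=\co_X(C)$ is locally free at $q$, which is exactly Cartierness of $C$ at $q$.

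The main obstacle is the last invertibility statement on the thickening $C'=dC$: one must rule out that the extension $\cb_1/\cb_{1-d}$ of line bundles on $C$ requires more than one $\co_{C'}$-generator at $q$, i.e.\ that the reflexive sheaf $\co_X(C)$ acquires an extra generator exactly at the singular point. This is where the precise Hilbert function $h^0(X,\cf(nC'))=\tfrac{(nd+1)(nd+2)}{2}$ (and not merely its leading term) is indispensable: a nontrivial class of $C$ in the local class group at $q$ would force an additional section somewhere in the filtration and deform the count away from the binomial value, as already signalled by the fact that the mildest non-Cartier example, the ruling on a quadric cone, has $C^2=\tfrac12$ and section growth $\sim 2n^2$ rather than the required $\sim n^2$. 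Making this local length/colength comparison at $q$ rigorous, using the Cohen--Macaulay hypothesis and the surjectivity of the restriction maps established above, is the crux of the argument.
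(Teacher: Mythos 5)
Your overall strategy is genuinely different from the paper's: you localize at the finitely many points $q\in C\cap \Sing X$ and try to prove that $\cb_1=\co_X(C)$ is free at $q$ by a Nakayama argument on $\cb_1/g\cb_1=\cb_1/\cb_{1-d}$, whereas the paper argues globally, showing that the Rees algebras $\tilde{A}^{(md)}$ and $\tilde{A}^{(md+1)}$ are generated in degree one, so that both $mdC$ and $(md+1)C$ are effective Cartier divisors and hence $C$ is Cartier. The outline (reduction to $q\in C\cap\Sing X$, the Nakayama criterion for a torsion-free rank-one module over the domain $\co_{X,q}$) is sound, but two of its inputs are not established. First, you ``recall'' that $\cb_i|_C\simeq \cb_i/\cb_{i-1}$ are line bundles on $C$ with $\cb_i|_C\simeq(\cb_1|_C)^{\otimes i}$. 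Neither statement is among the listed properties of the filtration sheaves: they are only known to be torsion free of rank one, and since $C$ may itself be singular and $X$ is singular at $q$ (your assertion that $C$ is regular at every point of $C\cap\Sing X$ has no basis in Definition~\ref{D:pre-spectral}), local freeness of $\cb_i|_C$ is a substantive claim. It is precisely Step~1 of the paper's proof, obtained via the generalized Schur pair: the subspaces $\da(nd+i)\subset k((u))$ are Fredholm, a good choice of the parameter $t$ makes them consist of rational functions regular at all singular points of $C$, and this also yields the multiplicativity $\da(nd)=\da(d)^n$ that your tensor-power relation encodes.

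Second, and decisively, the step you yourself call ``the crux'' --- ruling out that $\cb_1/\cb_{1-d}$ needs more than one generator over $\co_{C',q}$ --- is never carried out. Knowing that this module is a successive extension of $d$ line bundles on $C$ gives no bound on its number of generators: on the quadric cone with $C$ a ruling and $d=2$, the analogous module is an extension of two line bundles and needs two generators at the vertex. Moreover, the heuristic you offer points in the wrong direction: a non-Cartier point makes the section count \emph{deficient}, not excessive (for the cone, $h^0(X,\co_X(nC'))=(n+1)^2\sim n^2$ against the required $(2n+1)(2n+2)/2\sim 2n^2$), so the ``local length/colength comparison'' you defer is not the one sketched and would have to be set up differently. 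This missing step is where the entire content of Theorem~\ref{T:Cartier} lies; it is exactly what the paper's Step~2 (surjectivity of the multiplication maps $H^0(C,\co_C(D_1))\otimes H^0(C,\co_C(D_2))\to H^0(C,\co_C(D_1+D_2))$ for divisors of degree $\ge 3g+1$, proved by a Krichever-map valuation count) and Step~3 (degree-one generation of $\tilde{A}^{(md)}$ and $\tilde{A}^{(md+1)}$, by induction using Step~2) are designed to supply. Without a concrete substitute for that work, your proposal is a plausible plan with its central claim unproven, not a proof.
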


\begin{proof} {\it Step 1.} First let's prove that the sheaves $\cb_i|_C$ are locally free. Choose an isomorphism $\pi :\hat{\co}_{X,Q}\simeq k[[u,t]]$ as in definition \ref{D:geomdata_reduced}, item 4). In \cite[Sec. 3.5]{KOZ2014} a construction of the generalized Schur pair was given. This construction associates to a datum $(X,C,\co_X,Q)$ a subspace $\da$ in the field $k((u))((t))$ with the following properties (see also \cite[Sec. 2.6]{KurkeZheglov}: $\da$ is a $k$-subalgebra isomorphic to $\Gamma (X\backslash C,\co_X)$; for $0\le i<d$, $n\in \dz$ the $k$-subspaces 
$$
\da (nd+i)=\frac{\da\cap t^nk((u))[[t]]}{\da\cap t^{n+1}k((u))[[t]]}
$$
are  the images of the quintets $(C,Q,\cb_i(nC')|_C, u, \phi)$ under the  Krichever map  in the $k$-subspace $\frac{t^nk((u))[[t]]}{t^{n+1}k((u))[[t]]}\simeq k((u))$, where $\phi$ are some trivialisations of the sheaves $\cb_i(nC')|_C$ at $Q$ on $C$.  In particular, as $\cb_{nd}|_C$ are invertible sheaves, the subspaces $\da (nd)$ are Fredholm, i.e. 
$$
\dim_k\da (nd)\cap k[[u]]< \infty, \mbox{\quad} \dim_k\frac{k((u))}{\da (nd)+k[[u]]}<\infty ,
$$
and $\da (0)\simeq \Gamma (C\backslash Q, \co_C)$. Since for any $k,l\in \dz$ $\da (k)\cdot \da (l)\subseteq \da (k+l)$, it follows that for any $0\le i< d$ the subspaces $\da (nd+i)$ are either zero for any $n\in \dz$ or Fredholm, i.e. $\cb_i(nC')|_C$ are torsion free of rank one. By the asymptotic Riemann-Roch theorem $\chi (X, \co_X(nC'))\sim d^2n^2/2$, where from it follows that $\da (nd+i)$ can not be zero simultaneously. Choosing an appropriate $t$, we can change the subspaces $\da (n)$. Thus, we can fix $t$ such that the subspaces $\da (n)$ will belong to $Quot (\da (0))\subset k((u))$, and the subspaces $\da (\pm d)$ will consist of rational functions on $C$ which are regular at all  singular points of $C$ (as $\cb_0(C')|_C\simeq \co_C(D)$, $D\in C_{reg}$, cf. \cite[ex.1.9, Ch. IV]{Ha}). 

Since for any invertible sheaf $\cl$ and torsion free sheaf $\cg$ of rank one on $C$ the natural map $\cg (C\backslash Q)\otimes_{\co_C(C\backslash Q)} \cl (C\backslash Q)\rightarrow (\cg\otimes_{\co_C}\cl )(C\backslash Q)$  is an isomorphism, we get $\da (nd)=\da (d)^n$ for any $n\in\dz$. In particular, $\da (nd)$ consist of rational functions on $C$, which are regular  on all singular points of $C$. Since for any $0<i<d$ $\da (nd+i)^d\subseteq \da ((nd+i)d)$, the subspaces $\da (nd+i)$ consist of rational functions regular on all singular points of $C$. Thus, the sheaves $\cb_i(nC')|_C$ are locally free for all $i,n\in \dz$. 

\smallskip

{\it Step 2.} Let's prove that for any two very ample divisors $D_1, D_2$ of degrees $\ge 3g+1$ on $C$, where $g=p_a(C)$ is the arithmetical genus of $C$, the natural map $H^0(C, \co_C(D_1))\otimes_k H^0(C, \co_C(D_2)) \rightarrow H^0(C, \co_C(D_1+D_2))$ is an  isomorphism. 

This assertion can be easily seen e.g. with the help of the Krichever map. 

First, note that any divisor of degree $n>2g+1$ is equivalent to $D_P^n:=(n-g-1)Q +P_1+\ldots +P_{g+1}$ (here $Q$ is the point from the previous step) with pairwise distinct points $P_i$, and any two such divisors are equivalent to the divisors $D_P^n=(n-g-1)Q +P_1+\ldots +P_{g+1}$, $D_Q^m:=(m-g-1)Q +Q_1+\ldots +Q_{g+1}$ with pairwise distinct points $P_i, Q_j$. 

We can choose a basis in the space $H^0(C, \co_C(D_P^n))$ that contain functions $f_0=1$ and $f_1,\ldots ,f_{g+1}$ having poles at one point $P_i$ and being regular at other $P_j$, $j\neq i$, for any $i=1,\ldots ,g+1$ (and can choose analogous basis in $H^0(C, \co_C(D_Q^m))$ with functions $h_i$). Indeed, $\dim_kH^0(C,\co_C(D_P^n))=n+1-g$ and $\dim_kH^0(C, \co_C((n-g-1)Q))= n-2g >0$, i.e. there is a $(g+1)$-dimensional vector subspace of functions, which are not regular at the points of the set $\{P_1,\ldots ,P_{g+1}\}$. 

Now let $n,m> 3g+1$. Let $W$ be the image of the quintet  $(C,Q,\co_C(D_P^n),u,\phi )$ under the Krichever map, where the trivialisation $\phi$ at $Q$ is determined by the effective Cartier divisor $D_P^n$. Let $W'$ be the image of the analogous quintet  $(C,Q,\co_C(D_Q^m),u,\phi' )$. Then (cf. \cite[Sec. 2.6.1]{KurkeZheglov}) $W_0:=k[[u]]\cap W \simeq  H^0(C, \co_C(D_P^n))$, $W_0':=k[[u]]\cap W'\simeq H^0(C, \co_C(D_Q^m))$. Now choose bases in the spaces $H^0(C, \co_C(D_P^{2g+1})), H^0(C, \co_C(D_Q^{2g+1}))$ as in the previous paragraph and note that the images of the elements $f_0,\ldots ,f_{g+1}$, $h_0,\ldots ,h_{g+1}$ in the spaces $W_0, W_0'$ have valuations $\ge (n-2g-1)$, $\ge (m-2g-1)$ correspondingly (it follows from the construction of the Krichever map). From the Riemann-Roch theorem it follows then that the spaces $W,W'$ contain  elements with any valuations $\le (n-2g-1)$, $\le (m-2g-1)$ correspondingly (and valuations of all elements $\le n$, $\le m$ correspondingly). Let's denote by $\tilde{f}_i$ some elements from $W_0$ with $\nu_Q(\tilde{f}_i)=i$, $i=0,\ldots (n-2g-1)$, and by $\tilde{h}_i$ the analogous elements from $W_0'$. 

Let $W''$ be the image of the quintet  $(C,Q,\co_C(D_P^n+D_Q^m),u,\phi'' )$ in $k((u))$.  
Note that the elements $f_0g_0$, $g_0f_i$, $f_0g_i$, $i=1,\ldots , g+1$ form a basis in the space $H^0(C, \co_C(gQ+P_1+\ldots +P_{g+1}+Q_1+\ldots +Q_{g+1}))$, and their images in $W''$ have valuations $\ge (n+m-3g-2)$. Again by the Riemann-Roch theorem it follows that $W''$ contains elements with any valuations $\le (n+m-3g-2)$. Now note that all these observations imply that 
\begin{multline}
\dim_kH^0(C, \co_C(D_P^n+D_Q^m))-\dim_k\langle H^0(C, \co_C(gQ+P_1+\ldots +P_{g+1}+Q_1+\ldots +Q_{g+1})), \\
\tilde{f}_0\cdot H^0(C, \co_C(D_Q^m)), \ldots , \tilde{f}_i\cdot H^0(C, \co_C(D_Q^m))\rangle \le n-2g-3-i
\end{multline}
for any $i=0, \ldots , (n-2g-1)$. Thus, $H^0(C, \co_C(D_P^n+D_Q^m))= H^0(C, \co_C(D_P^n))\cdot H^0(C, \co_C(D_Q^m))$.

\smallskip

{\it Step 3.} Now we can prove that $C$ is a Cartier divisor. The idea of the proof is the same as in \cite[Th. 4.1]{KurkeZheglov}, and the arguments will be very similar to the arguments from the proofs of \cite[Th. 4.1]{KurkeZheglov}, \cite[Lem. 3.3]{Zheglov2013} or \cite[Th. 2.1]{KOZ2014}. 

Recall that $X\simeq \Proj \tilde{A}$, $\tilde{A}:=\tilde{A}_Q$, and the divisor $C$ is defined by the homogeneous ideal $I=(s)$. Moreover, the graded $k$-algebra $\tilde{A}^{(d)}=\bigoplus\limits_{k=0}^{\infty} \tilde{A}_{kd}$ is finitely generated by elements from 
$\tilde{A}_1^{(d)}$ as a $k$-algebra ($C'=dC$ is a Cartier divisor). It is easy to see that for any integer $n$ there exists $m>n$ such that the graded $k$-algebra $\tilde{A}^{(md)}$ is finitely generated by elements from $\tilde{A}_1^{(md)}$. So, we can fix $n$ such that all sheaves $\cb_i|_C$, $i\ge nd$ are very ample and have degrees greater than $3g+1$. 

Let's show (recall) that for any  graded algebra $\tilde{A}^{(m)}$, finitely generated by $\tilde{A}_1^{(m)}$, the divisor $mC$ is an  effective Cartier divisor. We consider the subscheme $C'$ in $X$ which is defined by the homogeneous ideal $I^m=(s^m)$ of the ring $\tilde{A}$. The topological space of the subscheme $C'$ coincides with the topological space of the subscheme $C$ (as it can be seen on an affine covering of $X$). The local ring $\co_{X,C}$ coincides with the valuation ring of the discrete valuation corresponding to $C$ on $Quot(A)$, 
$$
\co_{X,C} = \tilde{A}_{(I)}= \{  a s^n / b s^n  \, \mid \,  n \ge 0,  a \in A_n, \,  b \in A_n \setminus A_{n-1} \} \mbox{.}
$$
The ideal $I$ induces the maximal ideal in the ring $\co_{X,C}$, and the ideal $I^m$ induces the $m$-th power of the maximal ideal. Therefore, if we will prove that the ideal $I^m$ defines an effective Cartier divisor on $X$, then the cycle map on this divisor is equal to $mC$ (see \cite[Appendix~A]{KOZ2014}). 
By~\cite[prop. 2.4.7]{EGAII} we have $X=\Proj \, \tilde{A}\simeq \Proj \, \tilde{A}^{(m)}$. Under this isomorphism the subscheme $C'$ is defined by the homogeneous ideal $I^m \cap \tilde{A}^{(m)}$ in the ring $\tilde{A}^{(m)}$. This ideal is generated by the element $s^m\in \tilde{A}_1^{(m)}$.
The open affine subsets $D_+(x_i)=\Spec \, \tilde{A}^{(m)}_{(x_i)}$ with $x_i\in \tilde{A}_1^{(m)}$ define a covering of $\Proj \, \tilde{A}^{(m)}$. In every ring $\tilde{A}^{(m)}_{(x_i)}$ the ideal $(I^{m} \cap \tilde{A}^{(m)})_{(x_i)}$ is generated by the element $s^m/x_i$. Therefore the homogeneous ideal $I^{m} \cap \tilde{A}^{(m)}$ defines an effective Cartier divisor.

Let $m>n$ be any integer such that $\tilde{A}^{(md)}$ is finitely generated by elements from $\tilde{A}_1^{(md)}$. It suffices to show that $\tilde{A}^{(md+1)}$ is also finitely generated by elements from $\tilde{A}_1^{(md+1)}$. For, in this case $mdC$ and $(md+1)C$ are Cartier divisors, whence $C$ is a Cartier divisor. The proof is by induction, using Step 2. By the properties of sheaves $\cb_i$ mentioned before this theorem, we have $H^0(C, \cb_{md+1}|_C)\simeq H^0(X, \cb_{md+1})/H^(X, \cb_{md})$, since $m$ is big enough. By the local flatness criterium (cf. \cite[Lem. 10.3A]{Ha}) the sheaf $\cb_{md+1}$ is locally free at $Q$, hence by \cite[Lem. 2.1, Cor. 2.7]{KurkeZheglov} $H^0(C, \cb_{md+1}|_C)\simeq A_{md+1}$ and thus 
$$
H^0(C, \cb_{md+1}|_C)\simeq A_{md+1}/A_{md}\simeq \da (md+1)\cap k[[u]]. 
$$
Now $\tilde{A}_2^{(md+1)}$ is generated by $\tilde{A}_1^{(md+1)}$, since $\tilde{A}_2^{(md)}\subset \tilde{A}_2^{(md+1)}$ is generated by $\tilde{A}_1^{(md)}\subset \tilde{A}_1^{(md+1)}$ and $(\da (md+1)\cap k[[u]])\cdot (\da (md+j)\cap k[[u]])=(\da (2md+j)\cap k[[u]])$ for $j=0,1$ by Step 2. By the same reason and induction $\tilde{A}_k^{(md+1)}$ is generated by $\tilde{A}_{k-1}^{(md+1)}$ and $(\da (md+1)\cap k[[u]])$. 
\end{proof}

\begin{cor}
\label{C:ARR}
Let $B\subset \hat{D}$ be a $1$-quasi-elliptic strongly admissible commutative finitely generated subalgebra.

Then the following properties are equivalent:
\begin{enumerate}
\item
$\Rk (B)=1$;
\item
$\dim_k B_m\sim m^2/2$.
\end{enumerate}
\end{cor}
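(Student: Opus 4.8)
The plan is to read off $\Rk(B)$ by comparing the two Hilbert functions $m\mapsto\dim_k B_m$ and $m\mapsto \dim_k F_m$, both filtered by $\Ord$, and matching their leading terms. Before doing so I would note that both conditions in the corollary are invariant under the equivalence of Definition~\ref{D:rings}: conjugation by an invertible $S\in\hat D_1$ preserves $\Ord$ (hence the spaces $B_m$ and the numbers $\dim_k B_m$, as in the proof of Lemma~\ref{L:property}) and carries $F$ to an isomorphic $B$-module, hence preserves $\Rk(B)$. So I would first replace $B$ by a normalized representative of its class, which makes Lemma~\ref{L:coherence} available.

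The two inputs I would set up are the following. First, a direct computation: writing $F\cong k[\partial_1,\partial_2]$ and using that $\Ord(\partial_1^i\partial_2^j)=i+j$ (such a monomial has constant coefficient, so $\ord_M=0$), one checks from \eqref{E:LastOrder} that the class $\overline{\partial_1^i\partial_2^j}$ admits no representative of order $<i+j$; hence $F_m=\langle \overline{\partial_1^i\partial_2^j}:i+j\le m\rangle$ and
$$\dim_k F_m=\frac{(m+1)(m+2)}{2}=\frac{m^2}{2}+O(m).$$
Second, since $B$ is finitely generated, $gr(B)$ is a finitely generated graded $k$-algebra with $\Proj gr(B)=C$ a curve, so by asymptotic Riemann--Roch on $C$ its Hilbert function is eventually a degree-one polynomial; therefore $\dim_k B_m=\sum_{j\le m}\dim_k gr(B)_j$ is eventually a degree-two polynomial in $m$, and "$\dim_k B_m\sim m^2/2$'' is exactly the statement that its leading coefficient equals $1/2$.

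Now the two directions. For $\dim_k B_m\sim m^2/2\Rightarrow\Rk(B)=1$ I would use only a lower bound, which needs no coherence: by Lemma~\ref{L:property} the domain $B$ acts on the torsion-free module $F$ of rank $\rho:=\Rk(B)\ge 1$; choosing $B$-independent $f_1,\dots,f_\rho\in F$ with $e_i:=\Ord(f_i)$, and using Lemma~\ref{L:ord-propeties} together with the fact that reduction modulo $(x_1,x_2)\hat D$ does not raise order, the injection $(b_1,\dots,b_\rho)\mapsto\sum_i b_if_i$ sends $\bigoplus_i B_{m-e_i}$ into $F_m$, so
$$\frac{m^2}{2}+O(m)=\dim_k F_m\ \ge\ \sum_{i=1}^{\rho}\dim_k B_{m-e_i}\ =\ \rho\cdot\frac{m^2}{2}+O(m),$$
forcing $\rho\le 1$, hence $\rho=1$. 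For the converse $\Rk(B)=1\Rightarrow\dim_k B_m\sim m^2/2$ I would sandwich $\dim_k B_m$ between two twists of $\dim_k F_m$: a single $0\ne f_0\in F$ gives $Bf_0\hookrightarrow F$ and, as above, $\dim_k B_{m-e_0}\le\dim_k F_m$, i.e. an upper bound $\dim_k B_m\le\frac{m^2}{2}+O(m)$; for the matching lower bound I would invoke Lemma~\ref{L:coherence} (for a rank-one algebra the spectral sheaf is coherent, so $F$ is finitely generated of rank one), embed $F\hookrightarrow\Quot(B)$, clear one denominator to get $F\hookrightarrow g^{-1}B$, and use that $-\Ord$ is a rank-one valuation on $\Quot(B)$ (Lemma~\ref{L:property}) to obtain $\dim_k F_m\le\dim_k B_{m+\Ord g}$, i.e. $\dim_k B_m\ge\frac{m^2}{2}+O(m)$. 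Combined with the eventual polynomiality of $\dim_k B_m$, the two bounds pin its leading coefficient to $1/2$, so $\dim_k B_m-m^2/2$ is eventually linear.

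The step I expect to be the main obstacle is the lower bound in the converse direction: it is the only place where coherence of the spectral sheaf is genuinely used (a non-finitely-generated rank-one torsion-free $F$ could have $\dim_k F_m$ growing strictly faster than $\dim_k B_m$, since $\Quot(B)$ has infinitely many elements of bounded order), and making $F\hookrightarrow g^{-1}B$ respect the filtration up to a bounded shift requires matching the $\Ord$-filtration induced on $F$ with the valuation filtration on $\Quot(B)$. The forward inequality, by contrast, is elementary and rests only on the existence of $\Rk(B)$ many $B$-independent elements in $F$.
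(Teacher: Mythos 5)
Your route is genuinely different from the paper's. The paper proves both implications geometrically: it passes through Theorem~\ref{T:classification} to the reduced geometric data $(X,C,p,\cf)$, uses Theorem~\ref{T:Cartier} to make $C$ a Cartier divisor, identifies $B_n\simeq H^0(X,\co_X(nC))$, and then applies the asymptotic Riemann--Roch theorem on the spectral surface together with \cite[Prop.~3.2]{KOZ2014} (coherence of the spectral sheaf exactly when $C^2$ equals the rank of the ring) to pass between $\Rk(B)=1$ and the Hilbert asymptotics. You instead compare the Hilbert functions of $B$ and of the spectral module $F$ directly, using $\dim_k F_m=(m+1)(m+2)/2$. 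Your implication $(2)\Rightarrow(1)$ is correct and is in fact more elementary than the paper's: it needs only the computation of $\dim_k F_m$, the compatibility $F_i\cdot B_j\subseteq F_{i+j}$, and the existence of $\Rk(B)$ many $B$-independent elements of $F$ --- no coherence and no geometry. The same holds for your upper bound $\dim_k B_m\le m^2/2+O(m)$ in $(1)\Rightarrow(2)$, which uses only torsion-freeness of $F$ (Lemma~\ref{L:property}); and your reduction to the normalized case is consistent with the paper's own use of $\Ord(SQS^{-1})=\Ord(Q)$ in the proof of Lemma~\ref{L:property}.

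The lower bound in $(1)\Rightarrow(2)$ is, however, a genuine gap, and the property you extract from Lemma~\ref{L:coherence} --- ``$F$ is finitely generated of rank one'' --- cannot close it. Finite generation, torsion-freeness and compatibility of a filtration do not control the Hilbert function at all: for instance $F=B$ with the compatible, exhaustive filtration $F_m:=B_{2m}$ is finitely generated, torsion free of rank one, yet $\dim_k F_m\sim 2m^2$, so no inequality of the form $\dim_k F_m\le\dim_k B_{m+c}$ can follow from these abstract hypotheses. What you actually need is that the $\Ord$-filtration on $F$ is a \emph{good} filtration, i.e.\ that the Rees module $\tilde F=\bigoplus_m F_m s^m$ is finitely generated over $\tilde B$; this is precisely the content of the proof of Lemma~\ref{L:coherence} (it shows that $\bigoplus_i W_i$ is a finitely generated $\tilde A$-module), even though the lemma's statement only mentions coherence of $\cf$. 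Once you have homogeneous Rees generators, i.e.\ $F_m=\sum_j h_j\cdot B_{m-c_j}$ for all $m$, the ``filtration matching'' you flag as the obstacle becomes automatic: embedding $F\subseteq g^{-1}B\subset\Quot(B)$ and setting $a_j:=gh_j\in B$, you get $gF_m=\sum_j a_j B_{m-c_j}\subseteq B_{m+c'}$ with $c'=\max_j(\Ord(a_j)-c_j)$, hence $\dim_k F_m\le\dim_k B_{m+c'}$, with no separate comparison between $\Ord$ on $F$ and the valuation on $\Quot(B)$. A second, smaller, unjustified step: ``since $B$ is finitely generated, $gr(B)$ is a finitely generated graded $k$-algebra'' is false for general filtered algebras (associated graded rings of finitely generated filtered algebras need not be finitely generated); in the present situation it is true, but only through the same classification machinery, namely the finite generation of $\tilde B\simeq\tilde A$ coming from \cite[Lemma~3.3]{Zheglov2013}, so it cannot be taken for free if the point of your argument is to avoid that machinery.
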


\begin{proof}
Assume that $\Rk (B)=1$. Let $(A,W)$ and $(X,C,p,\cf )$ be the Schur pair and the reduced geometric data corresponding to $B$. By theorem \ref{T:classification} the rank of $(A,W)$ is one (cf. \cite[Th.3.2, 3.3]{Zheglov2013}). By theorem \ref{T:Cartier} $C$ is a Cartier divisor, and by \cite[Lem. 3.6]{Zheglov2013} we have $H^0(X,\co_X(nC))\simeq A_{n}\simeq B_{n}$. Since $C^2=1$, we have by the asymptotic Riemann-Roch theorem that  $\dim_k B_m\sim m^2/2$.

Conversely, assume $\dim_k B_m\sim m^2/2$. Then, obviously, $\rk (B)=1$. Let $(X,C,p,\cf ,\pi, \phi )$ be the geometric datum of rank $1$  constructed by \cite[Th.3.3]{Zheglov2013} and $(A,W)$ be the corresponding Schur pair. Then by definition of geometric datum and again by \cite[Lem. 3.6]{Zheglov2013} we have $h^0(X,\cf (nC'))=(nd+1)(nd+2)/2$, $h^0(X,\co_X(nC'))=\dim_k  B_{nd}\sim (nd)^2/2$. Then by the asymptotic Riemann-Roch theorem we have $C^2=1$. By \cite[Prop.3.2]{KOZ2014} the sheaf $\cf$ is coherent and $\rk (\cf )=1$.  Thus, $\Rk (B)=1$.
\end{proof}

\begin{defin}
\label{D:pre-Schur}
Let $K$ be a finitely generated field of transcendence degree two over $k$, and let $\nu_C$ be a discrete valuation on $K$.
A pair $(A,W)$ of subspaces in $K$, endowed with a decreasing valuation filtration, and satisfying the following conditions:
\begin{enumerate}
\item
$A$ is a finitely generated $k$-algebra with $\Quot (A)=K$;
\item
$W$ is a finitely generated $A$-module, where the module structure is induced by multiplication in $K$;
\item
$A_0=k$, $W_n=0$ for $n<0$ and for $n\ge 0$
$$\dim_k W_n=\frac{(n+1)(n+2)}{2}$$
\end{enumerate}
will be called a {\it pre-Schur pair of rank one}.
\end{defin}

Later (see corollary \ref{C:associated_pre-Schur-pair}) we will show that a pre-Schur pair $(A_Q, W_Q)$ associated with the pre-spectral data is a pre-Schur pair.

A pre-spectral data of rank one can be completed to a coherent geometric data of rank one. To prove this statement, we need to recall several facts and constructions from \cite{KurkeZheglov}.

The following property was discussed in \cite[\S 2.6.1]{KurkeZheglov}.
\begin{property}
\label{property}
If $\cg$ is a torsion free sheaf on the curve $C$ with $h^0(C,\cg )=l$, then there exists a dense open subset $U\subset C$ such that for every point $Q\in U$ the function
$$
f_Q(m)=h^0(C,\cg (-mQ))
$$
is strictly monotonic for $0\le m\le l$. In particular, $H^0(C, \cg (-lQ))=0$.
\end{property}
This property easily follows from the following observation: for any fixed section $a\in H^0(C, \cg)$ there is a dense open subset $U\subset C$ such that for any $Q\in U$ the image of $a$ in $\co_{C,Q}$ with respect to any trivialisation is invertible.

\begin{prop}
\label{P:extension_pre-spectral}
Let $(X,C,\cf )$ be a  pre-spectral data of rank one.

Then this data can be extended to a coherent geometric data of rank one, in particular, to a reduced geometric data of rank one.
\end{prop}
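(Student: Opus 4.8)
The plan is to complete the pre-spectral data $(X,C,\cf)$ to a coherent geometric data of rank one by constructing, at a suitably chosen point $Q\in C$, the morphism $j:T\rightarrow X$ together with the embedding $\cf\subset j_*\co_T$, and then verifying the three axioms of Definition~\ref{D:altgeomdata}. By Theorem~\ref{T:Cartier} the divisor $C$ is already Cartier, so the distinction between $C$ and its $\dq$-Cartier structure disappears and we may take $d=1$ (or work with the honest Cartier divisor $C'=dC$ as needed). I would first use Property~\ref{property}, applied to the structure sheaf and to $\cf$ restricted to $C$, to select a dense open $U\subset C$ of points that are regular on $C$ and on $X$ and at which the monotonicity of $h^0(C,-mQ)$ holds; fix such a point $Q$ (this is the point $p$ of the completed data). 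At $Q$ choose an isomorphism $\pi:\hat{\co}_{X,Q}\simeq k[[u,t]]$ with $t$ a local equation of $C$, and a trivialisation $\phi:\hat{\cf}_Q\simeq k[[u,t]]$, exactly as in Definition~\ref{D:geomdata_reduced}.

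Next I would build the morphism $j$ from the pre-Schur pair. Using the construction recalled in Remark~\ref{R:remark} and in the proof of Theorem~\ref{T:classification}, the pre-spectral data produces subspaces $A_Q,W_Q\subset k[[u]]((t))$ with $A_Q$ a $k$-algebra and $W_Q$ an $A_Q$-module, together with the sheaves $\cb_i,\cf_i$ enjoying $\cb_{id}\simeq\co_X(iC')$, $\cf_{id}\simeq\cf(iC')$, $\cf_0\simeq\cf$. The algebra inclusion $A_Q\hookrightarrow k[[u]]((t))\subset k[[u,t]][t^{-1}]$ together with the valuation $\nu_C$ determines a local $k$-algebra map on completions, hence a dominant morphism $j:T=\Spec k[[u,t]]\rightarrow X$ sending the closed point $O$ to $Q$; the inclusion $\cf\subset j_*\co_T$ comes from the same embedding $\phi$. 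The key computation is to identify $j_*(T_1)$ with $C$ and to check the fibre-product conditions of axiom~\ref{GD2}, which follow because $t$ was chosen to be a local equation of $C$ and $C$ is now Cartier, so $T\times_X C=T_1$ as Cartier divisors and $T_1\times_X\{Q\}=\{O\}$ reduces to the transversality already encoded in $\pi$.

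The heart of the proof is axiom~\ref{GD3}: the induced maps
$$
H^0(X,\cf(nC'))\rightarrow Rt^{-nd}/\cm^{nd+1}t^{-nd}
$$
must be isomorphisms for all $n\ge 0$. Both sides have dimension $(nd+1)(nd+2)/2$ --- the left by the defining cohomological condition in Definition~\ref{D:pre-spectral}, the right by a direct count of monomials in $k[[u,t]]/(u,t)^{nd+1}$ --- so it suffices to prove injectivity, or equivalently surjectivity, of each map. I would argue this by the filtered/graded method used throughout the paper: under the identification $H^0(X,\cf(nC'))\simeq (W_Q)_{nd}$ and the Krichever-type description of the graded pieces $(W_Q)_i/(W_Q)_{i-1}$ via the sheaves $\cf_i|_C$, the map in question is the associated-graded of the evaluation into $k[[u,t]]/(u,t)^{nd+1}$, and injectivity on each graded quotient follows from Property~\ref{property} at the chosen point $Q$ (it guarantees that a nonzero section of minimal $t$-order has nonzero image, i.e. does not vanish to excessive order in $u$ at $Q$). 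Summing over the filtration degrees, the dimension count forces the map to be an isomorphism.

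The main obstacle I anticipate is precisely this axiom~\ref{GD3}: one must ensure that the single point $Q$ is generic enough that the evaluation map is injective in every degree $n$ simultaneously, rather than merely for each fixed $n$. The remedy is to observe that the relevant non-degeneracy is controlled degree-by-degree by the finitely many sheaves $\cf_i|_C$ on the curve $C$, and that the multiplicativity $(A_Q)_k\cdot(A_Q)_l\subseteq(A_Q)_{k+l}$ together with Step~2 of Theorem~\ref{T:Cartier} (the surjectivity of multiplication maps $H^0\otimes H^0\to H^0$ in high degree) propagates the generic non-degeneracy from low degrees to all degrees. Once this is established, the triple $(X,j,\cf)$ satisfies Definition~\ref{D:altgeomdata}, and by Remark~\ref{R:general_remark_to_geomdata} the coherent data is automatically a reduced geometric data of rank one, with $Q=p$, which completes the proof.
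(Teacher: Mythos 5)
Your setup --- reduce to finding one point $Q$ at which the maps $H^0(X,\cf(nC))\to k[[u,t]]/(u,t)^{n+1}$ are isomorphisms, note that by the dimension count from Definition~\ref{D:pre-spectral} injectivity suffices, and observe that for each fixed $n$ this holds for $Q$ in a dense open subset of $C$ --- is essentially the paper's (which phrases it, via Remark~\ref{R:small_rem} and the exact sequence $0\to H^0(X,\cf((i-1)C))\to H^0(X,\cf(iC))\to H^0(C,\cf(iC)|_C)$, as strict monotonicity of the function $\tilde f_p$). You also correctly identify the crux: one needs a single $Q$ that works for \emph{all} $n$ simultaneously. But your resolution of that crux is where the proof breaks. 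The paper's resolution is soft and short: each degree $i$ gives a dense open $U_i\subset C$ of good points, and since the ground field is uncountable, $\bigcap_{i\ge 0}U_i\neq\emptyset$; any point in this intersection works. Your proposal never invokes this and instead tries to propagate non-degeneracy from finitely many low degrees to all degrees by multiplication, and that mechanism fails.

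Concretely: non-degeneracy in degree $n$ requires the $(n+1)$-dimensional space $W_n/W_{n-1}\subset H^0(C,\cf(nC)|_C)$ to contain sections vanishing at $Q$ to each order $0,1,\dots,n$, in particular to the top order $n$. A product $a\cdot w$ with $a\in A_k/A_{k-1}$, $w\in W_l/W_{l-1}$, $k+l=n$, vanishes at $Q$ to order $\nu_Q(a)+\nu_Q(w)$. Now $A_k/A_{k-1}$ embeds into $H^0(C,\co_X(kC)|_C)$, sections of a line bundle of degree $kC^2=k$ (recall $C^2=1$ for pre-spectral data), so $\nu_Q(a)\le k$, with equality only if $\co_X(kC)|_C\cong\co_C(kQ)$ --- a condition satisfied by at most finitely many points $Q$ once $g(C)\ge 1$ (and in the examples of Theorem~\ref{T:example} the low algebra degrees are useless anyway: $A_1=A_0=k$ because $h^0(X,\co_X(C))=1$). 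Since inductively $\nu_Q(w)\le l$, for a general $Q$ no product, for any splitting $n=k+l$, can reach order $n$. Surjectivity of multiplication maps does not repair this: Step~2 of Theorem~\ref{T:Cartier} concerns only line bundles $\co_C(D_i)$ (you would need an unproved analogue for the module $\cf|_C$), and even granting it, an element written as a sum of products can vanish to arbitrarily high order through cancellation, so being spanned by products gives no upper bound on vanishing orders and hence no injectivity of $W_n/W_{n-1}\to k[[u]]/(u^{n+1})$ in large degrees. The gap is therefore real: the countable-intersection argument over an uncountable field (or some genuine replacement for it) is unavoidable, and your proof does not contain it.
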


\begin{proof}
We need to show that there exists a smooth point $p\in C$ such that the conditions of definition \ref{D:geomdata_reduced}, item \ref{dat5} are satisfied (see remark \ref{R:small_rem}).

Using the exact sequence
$$
0 \rightarrow H^0(X, \cf ((i-1)C)) \rightarrow H^0(X, \cf (iC)) \rightarrow H^0(C, \cf (iC)|_C),
$$
the conditions of definition \ref{D:geomdata_reduced}, item \ref{dat5} can be reformulated as follows: the function
$$
\tilde{f}_p(m)=\dim_k\left(\frac{H^0(X, \cf (iC))}{H^0(X, \cf ((i-1)C))}\cap H^0(C, (\cf (iC)|_C)(-mp))\right)
$$
is strictly monotonic for $0\le m\le i$ (the intersection is taken in $H^0(C, \cf (iC)|_C)$).

By the same observation above that explains property \ref{property}, for any fixed $i\ge 0$ there is a dense open subset $U_{i}\subset C$ such that for any $p\in U_{i}$ the function $\tilde{f}_p(m)$ is strictly monotonic for $0\le m\le i$. Therefore, since the ground field $k$ is uncountable, there exists a point $p\in \cap_{i\ge 0}U_{i}$ regular in $C$ and $X$ such that these properties hold simultaneously. Taking some formal local parameters $u,t$ at such $p$ (as in \ref{D:geomdata_reduced}, item \ref{dat5}), and choosing any trivialisation of $\cf$ at $P$ we obtain a coherent geometric  data of rank one.
\end{proof}

\begin{cor}
\label{C:associated_pre-Schur-pair}
Let $(A_Q, W_Q)$ be the pre-Schur pair associated with the pre-spectral data $(X,C,\cf )$ of rank one. Then $(A_Q, W_Q)$ is a pre-Schur pair of rank one.
\end{cor}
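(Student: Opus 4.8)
The plan is to verify the three defining conditions of Definition \ref{D:pre-Schur} for $(A_Q,W_Q)$, using the concrete description from Remark \ref{R:remark}: inside the function field $K=k(X)$, filtered by the divisorial valuation $\nu_C$ attached to the prime divisor $C$, one identifies $A_Q$ with $\Gamma(X\setminus C,\co_X)$ and $W_Q$ with $\Gamma(X\setminus C,\cf)$, where $(A_Q)_n$ and $(W_Q)_n$ are the subspaces of $\nu_C$-pole order at most $n$. Since $X$ is an integral projective surface over $k$, the field $K$ is finitely generated of transcendence degree two, so the ambient data required by Definition \ref{D:pre-Schur} is in place.

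For conditions (1) and (2) I would argue by affineness. By Theorem \ref{T:Cartier} the divisor $C$ is an ample effective Cartier divisor, so $U:=X\setminus C$ is affine. Consequently $A_Q=\Gamma(U,\co_X)$ is the coordinate ring of the finite-type affine variety $U$, hence a finitely generated $k$-algebra, and $\Quot(A_Q)=k(U)=k(X)=K$ because $U$ is dense in the integral surface $X$; this gives condition (1). Likewise, as $\cf$ is coherent and $U$ is affine, $W_Q=\Gamma(U,\cf)$ is a finitely generated module over $\Gamma(U,\co_X)=A_Q$; this gives condition (2).

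Condition (3) is the only substantive point. The identities $(A_Q)_0\simeq(W_Q)_0\simeq k$ recorded before Theorem \ref{T:Cartier} give $(A_Q)_0=k$, and since $(W_Q)_0\simeq k$ is spanned by a $\nu_C$-unit the increasing filtration satisfies $(W_Q)_n=0$ for $n<0$. It remains to prove $\dim_k(W_Q)_n=\frac{(n+1)(n+2)}{2}$ for all $n\ge 0$. The pre-spectral definition yields this only along the subsequence $n=md$, through $(W_Q)_{md}\simeq H^0(X,\cf(mC'))$; the task is to control every intermediate level. My plan is to reduce to one convenient point and read the dimensions off the graded quotients. As recalled before Theorem \ref{T:Cartier}, the sheaves $\cf_n$ and the line bundles $\cf_n|_C$ are canonically independent of the chosen regular point, so the graded dimensions $\dim_k\bigl((W_Q)_n/(W_Q)_{n-1}\bigr)$ do not depend on $Q$. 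By Proposition \ref{P:extension_pre-spectral} the data extends, at a suitable regular point $p$, to a coherent geometric data of rank one; its defining isomorphisms $H^0(X,\cf(mC'))\xrightarrow{\ \sim\ }Rt^{-md}/\cm^{md+1}t^{-md}$ of Definition \ref{D:altgeomdata}, item \ref{GD3}, respect the $\nu_t$-filtration, and (taking $m$ with $md\ge n$) the $\nu_t$-graded piece of the target in pole order $n$ is spanned by the classes of $u^at^{-n}$, $0\le a\le n$, of dimension $n+1$. Hence $\dim_k(W_p)_n-\dim_k(W_p)_{n-1}=n+1$ for every $n\ge 0$, and summation yields $\dim_k(W_Q)_n=\dim_k(W_p)_n=\sum_{j=0}^{n}(j+1)=\frac{(n+1)(n+2)}{2}$.

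The main obstacle is precisely this passage from the subsequence $md$ to all levels $n$: the pre-spectral hypotheses constrain the cohomology only after twisting by the Cartier divisor $C'=dC$, while Definition \ref{D:pre-Schur} demands the exact dimension at each $n$. It is overcome by two ingredients already prepared in the text, namely the point-independence of the canonical sheaves $\cf_n$, which licenses the move to a point $p$ carrying the stronger coherent-data isomorphisms, and the compatibility of those isomorphisms with the $t$-adic filtration, which turns a single-twist statement into a level-by-level count.
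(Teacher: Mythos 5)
Your proof is correct and follows essentially the same route as the paper's: both reduce everything to the count $\dim_k(W_Q)_n=\tfrac{(n+1)(n+2)}{2}$, pass (using the point-independence of the $\nu_C$-filtration) to the special point $p$ supplied by Proposition \ref{P:extension_pre-spectral}, and extract the intermediate levels $n$ between the multiples of $d$ from the compatibility of the defining isomorphisms of the geometric data with the $\nu_t$-filtration --- which is precisely the paper's argument, stated there in terms of filtered pieces $(W_p)_i\simeq k[[u,t]]/(u,t)^{i+1}$ rather than your graded pieces, and with the same brief justification of the strictness step. Your explicit verification of conditions (1)--(2) of Definition \ref{D:pre-Schur} via affineness of $X\setminus C$, and your slightly different derivation of $(W_Q)_n=0$ for $n<0$ from $(W_Q)_0\simeq k$, are minor elaborations of points the paper's proof leaves implicit.
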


\begin{proof} Since $C$ is a Cartier divisor, then $(W_Q)_n=(A_Q)_n=0$ for $n< 0$. For, for $i<0$  $(A_Q)_0\supseteq (A_Q)_i$,  $k\nsubseteq (A_Q)_i$ as the valuation of any constant is zero, and $(W_P)_i\simeq H^0(X,\cf (iC))=0$ by definition.  So, 
we need to check only that for $n\ge 0$ $\dim_k (W_Q)_n=(n+1)(n+2)/2$. As we have already noticed, $(W_Q)_{i}\simeq H^0(X, \cf (iC))$. If $(X,C,p,\cf )$ is a reduced geometric data of rank one, an extension of $(X,C,\cf )$, then there are isomorphisms $H^0(X,\cf (nC))\simeq k[[u,t]]/(u,t)^{n+1}$. As $(W_p)_i$ are defined with the help of the discrete valuation $\nu_C =\nu_t|_K$, we get $(W_p)_{i}\simeq k[[u,t]]/(u,t)^{i+1}$. Thus, $\dim_k H^0(X, \cf (nC))= (n+1)(n+2)/2$ and we are done.
\end{proof}

\begin{prop}
\label{P:pre-spectral_reconstruction}
Let $(A,W)$ be a pre-Schur pair of rank one.

Then this pair defines a  pre-spectral data of rank one, where
$X=\Proj \tilde{A}$, $\tilde{A}=\bigoplus\limits_{n=0}^{\infty}A_n s^n$, $C=\Proj (gr(\tilde{A}))$, $\cf =\Proj \tilde{W}$, $\tilde{W}=\bigoplus\limits_{n=0}^{\infty}W_n s^n$.
\end{prop}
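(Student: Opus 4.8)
The plan is to check, one by one, the four conditions of Definition~\ref{D:pre-spectral} for the triple $(X,C,\cf)$ produced by the Rees construction. This construction is precisely the inverse of the passage from geometric data to (pre-)Schur pairs used in the proof of Theorem~\ref{T:classification} and in Remark~\ref{R:remark}, so the bulk of the verification amounts to unwinding that dictionary; the only substantial new points are finiteness and the Cohen--Macaulay conditions along $C$.

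I would first lay the algebraic backbone. Since $A\subset K$ is a domain and the filtration is a valuation filtration, $\tilde A=\bigoplus_{n\ge 0}A_ns^n$ is a graded subring of $K[s]$, hence a domain, and multiplicativity of $\nu_C$ shows that $gr(A)=\bigoplus_{n\ge 0}A_n/A_{n-1}=\tilde A/s\tilde A$ is again a domain. As $\Quot(A)=K$ has transcendence degree two over $k$ one gets $\dim\tilde A=3$ and $\dim gr(A)=2$, so $X=\Proj\tilde A$ is an integral surface, while $C=\Proj(gr(A))=V(s)$ is an integral curve on it, i.e.~a prime Weil divisor. The first delicate point is to show that $\tilde A$ is a finitely generated $k$-algebra, so that $X$ is a Noetherian projective surface: I would derive this from the finite generation of $A$ together with properties of the divisorial valuation $\nu_C$ (the two-dimensional setting being favourable here), reducing to the finite generation of a Veronese $\tilde A^{(d)}$ generated in degree one. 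This simultaneously shows that $\co_X(C)\simeq\co_X(1)$, that $C'=dC$ is a very ample Cartier divisor, and hence that $C$ is an ample $\dq$-Cartier divisor, giving most of condition~2.

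Next I would treat the local geometry. The generic point $\eta_C$ of $C$ has local ring $\co_{X,\eta_C}$ equal to the valuation ring of $\nu_C$ (as in Step~3 of the proof of Theorem~\ref{T:Cartier}), which is a DVR and in particular regular; thus $C\not\subset\Sing X$, completing condition~2. For $\cf=\Proj\tilde W$, coherence follows from $\tilde W$ being a finitely generated $\tilde A$-module (as $W$ is finitely generated over $A$ and $\tilde A$ is Noetherian), while torsion-freeness of rank one follows from $W$ being a nonzero $A$-submodule of $K$. The main obstacle I expect is condition~3 (that $X$ be Cohen--Macaulay along $C$) together with the Cohen--Macaulayness of $\cf$ along $C$: at $\eta_C$ both are clear (a DVR, resp.~a rank-one module over it), but at the closed points of $C$ one must control depth in the two-dimensional local rings. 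Here I would invoke the arguments of \cite[Th.~3.2, Cor.~3.1]{KOZ2014}, which establish exactly these statements for the $\Proj$ of the Rees algebra and module attached to such a pair and apply verbatim in the present setting.

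Finally I would compute the cohomology demanded by condition~4. The identification $H^0(X,\cf(nC))\simeq W_n$ of \cite[Lem.~2.1, Rem.~2.7]{KurkeZheglov}, valid because the $W_n$ are exactly the global sections on $X\setminus C$ of bounded pole order along $C$, together with item~3 of Definition~\ref{D:pre-Schur}, yields
$$
h^0(X,\cf(nC'))=\dim_k W_{nd}=\frac{(nd+1)(nd+2)}{2}\qquad (n\ge 0),
$$
while $h^0(X,\cf(nC'))=0$ for $n<0$ because $W_m=0$ for $m<0$. Combined with the properties established above, this shows that $(X,C,\cf)$ is a pre-spectral data of rank one. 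The steps on which I would spend the most care are the finite generation of $\tilde A$ (equivalently, projectivity of $X$ and ampleness of $C$) and the Cohen--Macaulay conditions along $C$; everything else reduces to the valuation-filtration dictionary already developed in this section.
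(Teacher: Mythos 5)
There is a genuine gap, and it sits exactly at the point your proposal treats as routine. You claim that coherence of $\cf$ ``follows from $\tilde W$ being a finitely generated $\tilde A$-module (as $W$ is finitely generated over $A$ and $\tilde A$ is Noetherian).'' This inference is false: the Rees module of a filtered module is finitely generated only when the filtration is a \emph{good} filtration (i.e.\ $A_m\cdot W_n=W_{m+n}$ for $n\gg 0$), and the valuation filtration $W_n=W\cap\nu_C^{-1}[-n,\infty)$ is not a priori good, so $\tilde W$ need not be generated by lifts of $A$-module generators of $W$; nor is $\tilde W$ a submodule of any obvious finitely generated $\tilde A$-module (for instance $W[s]=\bigoplus_n Ws^n$ is \emph{not} finitely generated over $\tilde A$, since the degree-$n$ piece of any finitely generated graded submodule is finite-dimensional over $k$). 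This is not a pedantic point: the paper itself stresses that spectral sheaves arising from this very construction can be quasi-coherent but non-coherent (\cite[Ex.~3.4]{KOZ2014}, quoted after Lemma~\ref{L:property}), which is why Lemma~\ref{L:coherence} in Section~2 needs a nontrivial argument. The paper's proof of Proposition~\ref{P:pre-spectral_reconstruction} circumvents the problem in the opposite order from yours: it first takes a finitely generated graded submodule $\tilde W_1\subset\tilde W$ containing generators of $W$, so that $\cf'=\Proj\tilde W_1$ is coherent torsion-free of rank one \emph{by construction}, proves $\cf'$ is Cohen--Macaulay along $C$ via \cite[Cor.~3.1]{KOZ2014}, then uses \cite[Lem.~2.1, Rem.~2.7]{KurkeZheglov} to compute $H^0(X,\cf'(nC'))\simeq W_{nd}$, and only then concludes $\cf'\simeq\Proj(\oplus_n W_{nd})\simeq\cf$ by \cite[Prop.~2.4.7]{EGAII}. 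Coherence and Cohen--Macaulayness of $\cf$ are \emph{outputs} of this identification, whereas in your plan they are inputs (you need coherence before you can invoke \cite[Cor.~3.1]{KOZ2014} at all).

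The same circularity infects your treatment of $\tilde A$, where you only gesture at ``properties of the divisorial valuation $\nu_C$'' and a Veronese reduction. Finite generation of $A$ over $k$ does not give finite generation of the Rees algebra $\tilde A$ (this is again the goodness-of-filtration problem, and in general Rees algebras of divisorial filtrations can fail to be Noetherian). The paper's mechanism is the same identification trick: set $X=\Proj\tilde A_1$ for the visibly finitely generated subalgebra $\tilde A_1=k[s,t_{1,s_1},\dots,t_{m,s_m}]$, check via \cite[Lemma~3.3]{Zheglov2013} that $C$ is an ample $\dq$-Cartier divisor on this $X$, identify $H^0(X,\co_X(nC'))\simeq A_{nd}$ by \cite[Lem.~2.1, Rem.~2.7]{KurkeZheglov}, and deduce finite generation of $\bigoplus_n A_{nd}$ (hence of $\tilde A$, and $X\simeq\Proj\tilde A$) from the finite generation of the section ring of an ample line bundle on a projective surface. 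Without this detour through the auxiliary finitely generated objects and the section-ring computation, both finiteness claims in your proposal are unsupported, and the subsequent appeals to \cite{KOZ2014} and the cohomology computation in condition~4 have nothing to stand on.
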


\begin{proof}
First let's show that $\tilde{A}$ is finitely generated.
Let $A$ be generated by  elements $t_1,\ldots ,t_m$ as  $k$-algebra. Denote by $t_{1,s_1},\ldots ,t_{m,s_m}$ the corresponding homogeneous elements in $\tilde{A}$, where for each $i$ we denote by $s_i$ the minimal number such that $t_i\in A_{s_i}$. Consider the finitely generated $k$-subalgebra $\tilde{A}_1=k[s,t_{1,s_1},\ldots , t_{m,s_m}]\subset \tilde{A}$. By
\cite[Lemma 3.3]{Zheglov2013}, item 1) (see also the proof of \cite[Theorem 2.1]{KOZ2014}), it defines an irreducible projective surface $X=\Proj (\tilde{A})$, with an irreducible ample $\dq$-Cartier divisor $C=\Proj (gr(\tilde{A}))$ corresponding to the valuation $\nu_C$, which does not belong to the singular locus of $X$. Besides, $H^0(X\backslash C, \co_X)\simeq (\tilde{A}_1)_{(s)}\simeq A$.
If $d\ge 1$ is a minimal integer such that the $k$-algebra $\tilde{A}_1^{(d)}=\bigoplus\limits_{l=0}^{\infty}(\tilde{A}_1)_{ld}$ is finitely generated by elements from $(\tilde{A}_1)^{(d)}_1$ as a $k$-algebra (such $d$ exists by \cite[Ch.III, \S 1.3, prop.3]{Bu}), then $dC$ is a very ample Cartier divisor, and by \cite[Lemma 2.1, Remark 2.7]{KurkeZheglov} (applied to an arbitrary smooth on $X$ and $C$ point $Q$) $H^0(X, \co_X(nC'))\simeq A_{nd}$ (cf. \cite[Lem.3.6]{Zheglov2013}). Thus, the ring $\bigoplus\limits_{n=0}^{\infty}A_{nd}$ is finitely generated, hence the ring $\tilde{A}$ is finitely generated and $X\simeq \Proj \tilde{A}$.

Now just the same arguments as in the proof of \cite[Th. 3.2]{KOZ2014} (namely, the arguments starting from the 4-th sentence of the proof) can be applied to show that $X$ is Cohen-Macaulay along $C$.

The arguments analogous to the proof of \cite[Lemma 3.8]{Zheglov2013} or to the proof of lemma \ref{L:coherence} can be applied to show that the sheaf $\cf =\Proj \tilde{W}$ is coherent. Namely, we can consider a finitely generated graded $\tilde{A}$-module $\tilde{W}_1$, which contains all generators of $W$ over $A$. Then $\cf'=\Proj \tilde{W}_1$ is a coherent torsion free sheaf of rank one, since $(\tilde{W}_1)_{(s)}=W\subset K$. By \cite[Corol. 3.1]{KOZ2014} this sheaf is Cohen-Macaulay along $C$ (we refer to Corol. 3.1 since its proof is valid also in our situation). Then by \cite[Lemma 2.1, Remark 2.7]{KurkeZheglov} $H^0(X, \cf'(nC'))\simeq W_{nd}$ for all $n\ge 0$, thus $\cf'\simeq \Proj (\oplus_{n=0}^{\infty}W_{nd})\simeq \cf$ by \cite[Prop. 2.4.7]{EGAII}.
\end{proof}

\section{Examples}

The results of previous section show that in order to find examples of rank one commutative subalgebras in $\hat{D}$, it is enough to find examples of pre-spectral data. Conjecturally, the operators of such algebras could be expressed explicitly, in analogy with one-dimensional situation of ordinary differential operators or difference operators, see \cite{Krichever77}, \cite{Krichever78}, \cite{KricheverNovikov2003}.

It is easy to see from the results of previous section that "trivial" algebras (see Introduction) correspond to spectral data with the property $h^0(X,\co_X(C))\ge 2$ (cf. \cite[Th.4.1]{KurkeZheglov}). Indeed, as we have recall in the course of proof of theorem \ref{T:classification}, $H^0(X,\co_X(C))\simeq A_2\simeq B_2$. Thus the spectral surfaces of such algebras endow a pencil of curves (some examples,  with singular spectral surfaces, see in \cite[\S 4]{KurkeZheglov}).

In this section we investigate the question whether there are pre-spectral data with a smooth spectral surface and $h^0(X,\co_X(C))=1$. We construct first examples of such data with minimal possible genus of the divisor $C$.

\begin{prop}
\label{P:genus_of_C}
Let $(X,C,\cf )$ be a pre-spectral data of rank one with a smooth surface $X$ and such that the arithmetical genus $p_a(C)\le 1$. Then $h^0(X,\co_X(C))\ge 2$.
\end{prop}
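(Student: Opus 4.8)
The plan is to show that a smooth spectral surface $X$ carrying a pre-spectral datum of rank one with $p_a(C)\le 1$ forces $h^0(X,\co_X(C))\ge 2$. Since $X$ is smooth and $C$ is a Cartier divisor (by Theorem \ref{T:Cartier}), all the cohomological machinery of smooth surfaces is available. The key numerical fact I would rely on is $C^2=1$ (Remark \ref{R:general_remark_to_geomdata}), together with the adjunction formula $p_a(C)=1+\tfrac12(C^2+C\cdot K_X)=1+\tfrac12(1+C\cdot K_X)$, where $K_X$ is the canonical divisor. The assumption $p_a(C)\le 1$ thus translates into the bound $C\cdot K_X\le -1$, and since $C\cdot K_X$ must have the correct parity (the adjunction genus is an integer), the two admissible cases are $p_a(C)=0$ (giving $C\cdot K_X=-3$) and $p_a(C)=1$ (giving $C\cdot K_X=-1$).

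\textbf{Main estimate via Riemann--Roch.} The heart of the argument is the Riemann--Roch theorem on the smooth surface $X$:
\begin{equation}
\chi(X,\co_X(C))=\chi(\co_X)+\tfrac12\bigl(C^2-C\cdot K_X\bigr)=\chi(\co_X)+\tfrac12\bigl(1-C\cdot K_X\bigr).
\end{equation}
In the case $p_a(C)=0$ this gives $\chi(X,\co_X(C))=\chi(\co_X)+2$, and in the case $p_a(C)=1$ it gives $\chi(X,\co_X(C))=\chi(\co_X)+1$. Now I would combine this with $h^0(X,\co_X(nC'))=\tfrac{(nd+1)(nd+2)}{2}$ from item 4 of Definition \ref{D:pre-spectral}, which for the relevant normalization encodes $h^0(X,\co_X)=1$ (take $n=0$) and pins down the leading growth so that $\chi(\co_X)=1$, i.e.\ $q=p_g=0$ would be the expected geometric situation. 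With $\chi(\co_X)=1$ one obtains $\chi(X,\co_X(C))\ge 2$ in the genus-one case and $\ge 3$ in the genus-zero case.

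\textbf{Passing from Euler characteristic to $h^0$.} The remaining step is to convert the lower bound on $\chi(X,\co_X(C))=h^0-h^1+h^2$ into the desired bound on $h^0$. I would control $h^2(X,\co_X(C))=h^0(X,\co_X(K_X-C))$ by Serre duality: since $C$ is ample and $(K_X-C)\cdot C=C\cdot K_X-C^2\le -1-1<0$, the divisor $K_X-C$ cannot be effective, so $h^2(X,\co_X(C))=0$. Then $h^0(X,\co_X(C))=\chi(X,\co_X(C))+h^1(X,\co_X(C))\ge \chi(X,\co_X(C))\ge 2$, which is exactly the claim.

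\textbf{Anticipated obstacle.} The delicate point is justifying $\chi(\co_X)=1$ (equivalently $h^0(X,\co_X)=1$ together with the vanishing of the irregularity and geometric genus contributions to $\chi$), because the datum directly supplies only the twisted groups $h^0(X,\co_X(nC'))$ and the Cohen--Macaulay hypotheses, not $\chi(\co_X)$ outright. The hard part will therefore be extracting $\chi(\co_X)\ge 1$ from the Hilbert-polynomial data of item 4 of Definition \ref{D:pre-spectral}: I would match the asymptotic Riemann--Roch expansion of $h^0(X,\co_X(nC'))$ (whose leading term $(nd)^2/2$ already encodes $C^2=1$) against the prescribed quadratic $\tfrac{(nd+1)(nd+2)}{2}$, reading off the constant term as $\chi(\co_X)$. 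Once this bookkeeping is done, the adjunction-plus-Riemann--Roch computation above closes the argument cleanly; the parity constraint on $C\cdot K_X$ is what rules out any intermediate value of $p_a(C)$ that might have given a weaker bound.
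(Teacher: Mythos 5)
Your overall framework (adjunction plus Riemann--Roch plus Serre duality to kill $h^2$) is sound as far as it goes: $C^2=1$ and $p_a(C)\le 1$ do give $K_X\cdot C\le -1$, the vanishing $h^2(X,\co_X(C))=h^0(X,\co_X(K_X-C))=0$ is correct since $(K_X-C)\cdot C<0$ with $C$ ample, and $h^0\ge\chi$ then reduces everything to the inequality $\chi(\co_X)\ge 1$. But that last inequality is exactly where the proposal breaks down, and it is not a bookkeeping issue. Your proposed derivation rests on a misreading of Definition \ref{D:pre-spectral}: item 4 prescribes the Hilbert function of the spectral \emph{sheaf}, $h^0(X,\cf(nC'))=\frac{(nd+1)(nd+2)}{2}$, not of $\co_X(nC')$. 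Reading off the constant term of that polynomial therefore yields $\chi(\cf)=1$, not $\chi(\co_X)=1$. For a rank-one torsion-free sheaf on a smooth surface one has $\cf\cong\ci_Z\otimes L$ with $L$ a line bundle and $Z$ a $0$-dimensional subscheme, so $\chi(\cf)=\chi(\co_X)+\tfrac12 L\cdot(L-K_X)-\mathrm{length}(Z)$; since nothing in the definition pins down $L$ or $Z$, the value $\chi(\cf)=1$ places no a priori constraint on $\chi(\co_X)$. Note also that the pre-spectral data gives no lower bound on $h^0(X,\co_X(nC))$ at all (only an upper bound, via the inclusion $A\cdot W\subset W$), so there is no way to read $\chi(\co_X)$ off asymptotics of sections of $\co_X$ either.

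What is really needed is $q=h^1(X,\co_X)=0$ (the vanishing $p_g=0$ is the easy half: $K_X\cdot C<0$ and $C$ ample force $K_X$ non-effective), and establishing $q=0$ is precisely what the paper's entire proof is about; the authors even state in the introduction that $q=p_g=0$ for smooth spectral surfaces is in general only a conjecture, so it cannot be waved through. The paper argues case by case: if $p_a(C)=0$ then $C\cong\dpp^1$ forces $X\cong\dpp^2$ and $h^0=3$; if $p_a(C)=1$ and $C$ is singular (hence rational), the Albanese image of $X$ must be a point because an abelian variety contains no rational curves, giving $H^1(X,\co_X)=0$ and then $h^0(X,\co_X(C))=2$ from the restriction sequence $0\to\co_X\to\co_X(C)\to\co_C(C)\to 0$; if $p_a(C)=1$ and $C$ is smooth elliptic, a Picard-scheme argument following Badescu's Theorem 2.5.19 shows that either $\Pic(X)\to\Pic(C)$ is surjective (which leads to $C\cong\dpp^1$, a contradiction) or the induced map $\underline{\Pic}^0(X)\to\underline{\Pic}^0(C)$ is trivial, which again forces $H^1(X,\co_X)=0$. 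None of this geometry (Albanese morphism, Picard schemes, the dichotomy on the restriction map) appears in your proposal, and your reduction does not supply a substitute for it; the "anticipated obstacle" you flag is the actual theorem, and the resolution you sketch for it does not work.
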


\begin{proof}
If $p_a(C)=0$, then $C\simeq \dpp^1$, $X\simeq \dpp^2$ and $h^0(X,\co_X(C))= 3$, see e.g. the end of the proof of \cite[Th.4.1]{KurkeZheglov}.

If $p_a(C)=1$ and $C$ is singular, then $C$ is rational. Then the Albanese morphism $f: X \rightarrow Alb(X)$ must have trivial image, since the ample and rational curve C can not be mapped to a rational curve of $Alb(X)$. This implies the vanishing of $H^1(X, \co_X)$ and the exact sequence
\begin{equation}
\label{E:sequence}
0\rightarrow H^0(X,\co_X)\rightarrow H^0(X,\co_X(C))\rightarrow H^0(C,\co_C(C))\rightarrow H^1(X,\co_X)=0
\end{equation}
yields $h^0(X,\co_X(C))= 2$.

If $p_a(C)=1$ and $C$ is smooth, then we can apply the arguments from \cite[Th.2.5.19]{Badescu}. Namely, since $p_a(C)=1$ and $C^2=1$, we must have $h^1(C,\co_C(C))=0$, hence the truncated exponential sequence
$$
0\rightarrow \co_C(-C)\rightarrow \co_{C(1)}^*\rightarrow \co_C^*\rightarrow 0,
$$
where $C(1)$ means the first infinitesimal neighbourhood of $C$ in $X$, 
yields the isomorphism of algebraic groups $\underline{\Pic}^0(C(1))\simeq \underline{\Pic}^0(C)$, because the linear algebraic group $\underline{H^1(C,\co_C(-C))}$ (regarded as a product of $h^1(C,\co_C(C))$ copies of the additive group $\dg_a$) is trivial. 

Now we have two possibilities. If the canonical restriction map $\alpha :\Pic (X)\rightarrow \Pic (C)$ is surjective, then 
the map $\Pic(X)\rightarrow \Pic (C(1))$ is surjective and then 
we are in the situation of theorem 2.5.19 from \cite{Badescu}, and $C$ must be isomorphic to $\dpp^1$, a contradiction. If $\alpha$ is not surjective, it induces a trivial map of algebraic groups $\underline{\Pic}^0(X)\rightarrow \underline{\Pic}^0(C)$. Then $H^1(X,\co_X)=0$ and again by \eqref{E:sequence} $h^0(X,\co_X(C))= 2$.
\end{proof}

The following definition is motivated by the fact that Cohen-Macaulay sheaves form an open subset in the moduli space of rank one torsion free sheaves with fixed Hilbert polynomial (see \cite[Th. 12.2.1]{EGAIV}). 

\begin{defin}
\label{D:excellent}
Let $X,C$ be a surface and divisor satisfying conditions from definition \ref{D:pre-spectral}.  We will say that a rank one torsion free sheaf $\cf$ is {\it excellent} if $H^1(C,\cf|_C)=0$ and
$$
\chi (\cf (nC'))=\frac{(nd+1)(nd+2)}{2}.
$$
\end{defin}

If $X$ is smooth, then $C$ is a Cartier divisor, hence $d=1$. Then by property \ref{property} there is a point $Q$ such that the conditions from \cite[Prop. 2.3]{KurkeZheglov} hold. Then $H^1(X,\cf )=H^2(X,\cf )=0$ and $\cf$ is Cohen-Macaulay, hence it is locally free.

\begin{prop}
\label{P:excellent}
Let $X,C$ be a smooth surface and divisor satisfying conditions from definition \ref{D:pre-spectral}, let $\cf$ be a torsion free sheaf of rank one. Then the following conditions are equivalent:
\begin{enumerate}
\item
$\cf$ is an excellent sheaf;
\item
$H^i(X,\cf (-C))=0$, $i=0,1,2$; $h^0(X,\cf )=1$ and
$$
\chi (\cf ((n-1)C))=\frac{n(n+1)}{2}.
$$
\end{enumerate}
\end{prop}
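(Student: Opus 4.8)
The plan is to exploit the short exact sequence
$$
0 \to \cf(-C) \to \cf \to \cf|_C \to 0, \qquad (\ast)
$$
obtained by tensoring $0\to\co_X(-C)\to\co_X\to\co_C\to 0$ with $\cf$ (the left map stays injective, since multiplication by a local equation of $C$ is injective on the torsion free sheaf $\cf$ over the integral scheme $X$, and $C$ is Cartier because $X$ is smooth, so $d=1$). First I would record that the two numerical hypotheses coincide: substituting $n\mapsto n-1$ turns $\chi(\cf((n-1)C))=n(n+1)/2$ into $\chi(\cf(nC))=(n+1)(n+2)/2$, which is exactly the Euler characteristic required in Definition \ref{D:excellent} for $d=1$. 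Evaluating this polynomial at $n=0$ and $n=-1$ gives $\chi(\cf)=1$ and $\chi(\cf(-C))=0$, and additivity of $\chi$ along $(\ast)$ yields $\chi(C,\cf|_C)=\chi(\cf)-\chi(\cf(-C))=1$. Since $C$ is a curve, $H^2(C,\cf|_C)=0$, so the only free parameter in $\cf|_C$ is $h^1(C,\cf|_C)$, tied to $h^0$ by $h^0(\cf|_C)-h^1(\cf|_C)=1$. Thus, granting the common numerical condition, it remains to prove that $H^1(C,\cf|_C)=0$ is equivalent to the package in item (2).

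For the implication (2)$\Rightarrow$(1) I would just chase the long exact cohomology sequence of $(\ast)$. Assuming $H^i(X,\cf(-C))=0$ for all $i$ and $h^0(X,\cf)=1$, the vanishing of $H^0$ and $H^1$ of $\cf(-C)$ forces the restriction map $H^0(X,\cf)\to H^0(C,\cf|_C)$ to be an isomorphism, whence $h^0(C,\cf|_C)=1$; combined with $\chi(C,\cf|_C)=1$ this gives $h^1(C,\cf|_C)=0$, i.e. item (1). No further input is needed here.

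The substantive direction is (1)$\Rightarrow$(2). Assuming $H^1(C,\cf|_C)=0$ we get $h^0(C,\cf|_C)=\chi(C,\cf|_C)=1$. By the observation following Definition \ref{D:excellent} (Property \ref{property} together with \cite[Prop.\ 2.3]{KurkeZheglov}), excellence forces $\cf$ to be locally free with $H^1(X,\cf)=H^2(X,\cf)=0$; in particular $h^0(X,\cf)=\chi(\cf)=1$, which is the second assertion of (2). Feeding $H^1(C,\cf|_C)=0$ and $H^2(X,\cf)=0$ into the tail of the long exact sequence of $(\ast)$ gives $H^2(X,\cf(-C))\cong H^2(X,\cf)=0$. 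Since $\chi(\cf(-C))=0$, it then suffices to prove $H^0(X,\cf(-C))=0$, as this automatically forces $H^1(X,\cf(-C))=0$ as well, completing item (2).

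The main obstacle is exactly this last vanishing $H^0(X,\cf(-C))=0$: equivalently, the restriction map $\rho\colon H^0(X,\cf)\to H^0(C,\cf|_C)$ of one dimensional spaces must be nonzero, i.e. the generator of $H^0(X,\cf)$ must not vanish identically along $C$. Numerical invariants alone do not force this (one could a priori have $\cf\cong\co_X(C+D)$ with $D\ge 0$ effective, for which $H^0(X,\cf(-C))\neq 0$), so here the full strength of excellence must enter. I would argue it through the general point supplied by Property \ref{property}: at the point $Q\in C$ selected there the unique section of $\cf|_C$ is nonvanishing, and the strict monotonicity of $m\mapsto h^0(C,(\cf|_C)(-mQ))$ encoded in the hypotheses of \cite[Prop.\ 2.3]{KurkeZheglov} propagates downward to give $h^0(X,\cf(nC))=0$ for every $n<0$, in particular $H^0(X,\cf(-C))=0$. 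Equivalently, one shows that under excellence the section realizing $h^0(\cf)=1$ restricts to the nonzero section of $\cf|_C$, so that $\rho$ is an isomorphism. This is the step where the precise cohomological normalization of Definition \ref{D:excellent} is indispensable, and verifying it carefully is the crux of the argument.
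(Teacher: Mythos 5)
Your proposal is correct and follows essentially the same route as the paper's proof: the restriction sequence $0\to\cf(-C)\to\cf\to\cf|_C\to 0$, the observation that the two numerical hypotheses coincide under reindexing, the Euler--characteristic bookkeeping giving $\chi(\cf)=1$, $\chi(\cf(-C))=0$, $\chi(C,\cf|_C)=1$, and the appeal to Property \ref{property} together with \cite[Prop. 2.3]{KurkeZheglov} (the paragraph preceding the proposition) for the consequences of excellence, namely that $\cf$ is locally free with $H^1(X,\cf)=H^2(X,\cf)=0$ and hence $h^0(X,\cf)=\chi(\cf)=1$. Your direction (2)$\Rightarrow$(1) is exactly the paper's argument. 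Concerning the step you flag as the unverified crux, $H^0(X,\cf(-C))=0$: your diagnosis is accurate and in fact sharper than the paper's wording, since the phrase ``from the exact sequence and the Riemann--Roch theorem'' cannot by itself produce this vanishing (your example $\cf\cong\co_X(C+D)$, $D\ge 0$, shows the numerics alone do not exclude it); the actual engine is again \cite[Prop. 2.3]{KurkeZheglov}. Its hypotheses hold at the generic point $Q\in C$ supplied by Property \ref{property}, because excellence gives $H^1(C,\cf|_C)=0$ and $h^0(C,\cf|_C)=\chi(C,\cf|_C)=1$; its conclusion is that $(X,C,Q,\cf)$ satisfies the cohomological conditions of a geometric datum, i.e.\ the evaluation maps $H^0(X,\cf(nC))\to k[[u,t]]/(u,t)^{n+1}$ are isomorphisms for $n\ge 0$. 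The case $n=0$ says precisely that the generator of $H^0(X,\cf)$ is nonzero at $Q\in C$, hence cannot vanish along $C$, which is the desired $H^0(X,\cf(-C))=0$; then $\chi(\cf(-C))=0$ together with $H^0=H^2=0$ forces $H^1(X,\cf(-C))=0$. So the mechanism you propose (nonvanishing of the unique section at $Q$, equivalently injectivity of $H^0(X,\cf)\to H^0(C,\cf|_C)$) is exactly how the paper closes the argument; you identified the right tool and only stopped short of drawing this one-line consequence from its conclusion.
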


\begin{proof}
If $\cf$ is an excellent sheaf, then, obviously,
$$
\chi (\cf ((n-1)C))=\frac{n(n+1)}{2}.
$$
Now from the exact sequence
\begin{equation}
\label{E:trivial}
0\rightarrow \cf (-C)\rightarrow \cf \rightarrow \cf|_C\rightarrow 0
\end{equation}
and from the Riemann-Roch theorem it follows $H^i(X,\cf (-C))=0$, $i=0,1,2$. At last, as we have already seen above,  by \cite[Prop. 2.3]{KurkeZheglov} we have $H^1(X,\cf )=H^2(X,\cf )=0$ and $h^0(X,\cf )=1$.

If $\cf$ satisfies the conditions of item 2), then, clearly,
$$
\chi (\cf (nC))=\frac{(n+1)(n+2)}{2},
$$
and again from the sequence  \eqref{E:trivial}  it follows immediately that $H^1(C,\cf|_C)=0$, i.e. $\cf$ is excellent.
\end{proof}

\begin{thm}\label{T:example} There is an eight-dimensional
family 
of pairwise non-isomorphic Godeaux surfaces $X$ such that on each $X$
from this family there  are 1200 different divisors $D_j$ and four curves $C_i$ satisfying the following conditions:
\begin{itemize}
\item[$(i)$] $C_i$  is  smooth, the sheaf $\mathcal O_X(C_i)$ is ample, and 
$\dim H^0(X,\mathcal O_X(C_i))=1$;
\item[$(ii)$]
$C_i^2=1$, $g(C_i)=2$, where $g(C_i)$ is the genus of $C_i$;;
\item[$(iii)$] $(D_j,C_i)_X=g(C_i)-1$;
\item[$(iv)$] 
$\chi(D_j)=\sum_{i=0}^2(-1)^i\dim H^i(X,\mathcal O_X(D_j))=0$.
\end{itemize}
Moreover, among the divisors $D_j$ there are at least 1080 different divisors such that \\
$\dim H^i(X,\mathcal O_X(D_j))=0$ for $i=0,1,2$, and among them at least 840 divisors such that $\dim H^0(X,\co_X(D_j+C_i))=1$.
The triples $(X,C_i,D_j+C_i)$, where $X$ is any surface from this family, and $D_j$ is any representative of the last class of divisors, are pre-spectral data of rank one.
\end{thm}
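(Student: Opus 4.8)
The plan is to realise each $X$ as a classical (numerical) Godeaux surface, namely the quotient $X=Y/\langle\sigma\rangle$ of a smooth $\sigma$-invariant quintic $Y\subset\dpp^3$ by the free action $\sigma\colon(x_0:x_1:x_2:x_3)\mapsto(x_0:\zeta x_1:\zeta^2x_2:\zeta^3x_3)$ with $\zeta$ a primitive $5$th root of unity. For a general invariant $Y$ these surfaces satisfy $K_X^2=1$, $p_g=q=0$, $\Pic(X)_{\tors}=\dz/5=\langle\tau\rangle$, and $K$ is ample; they form the classical $8$-dimensional family, whose general members are pairwise non-isomorphic. Since $p_g=0$ forces $\rk\mathrm{Num}(X)=b_2=9$ and $K$ is a characteristic vector of the odd unimodular intersection form of signature $(1,8)$, the orthogonal complement $K^{\perp}$ is the even negative-definite lattice $E_8(-1)$; in particular it contains exactly $240$ classes $v$ with $v^2=-2$. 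These three ingredients---the torsion $\dz/5$, the effective canonical twists, and the $240$ roots of $E_8$---are what generate the numbers $4$ and $1200=240\cdot5$.

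First I would produce the curves. For $i=1,2,3,4$ let $C_i$ be the unique member of $|K+i\tau|$: the $\sigma$-eigenspace decomposition of the $4$-dimensional space $H^0(Y,\omega_Y)$ distributes one dimension to each nontrivial twist, so $h^0(X,\co_X(K+i\tau))=1$ for $i=1,2,3,4$ while $h^0(X,\co_X(K))=p_g=0$. Thus each $C_i$ is a single effective divisor, smooth for general $Y$; and since $C_i\equiv K$ numerically, $C_i^2=K^2=1$, $\co_X(C_i)$ is ample, and adjunction gives $g(C_i)=2$. This establishes $(i)$ and $(ii)$; note that by Proposition \ref{P:genus_of_C} genus $2$ is the least possible once $h^0=1$ is imposed.

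Next I would take the $1200$ divisor classes $D_j\equiv K+v+k\tau$ with $v$ a root of $E_8$ and $k\in\dz/5$ (these are $1200$ distinct elements of $\Pic(X)$). Conditions $(iii)$, $(iv)$ are then pure Riemann--Roch: as $v\cdot K=0$ and torsion is numerically trivial, $D_j\cdot C_i=D_j\cdot K=1=g(C_i)-1$ and $\chi(\co_X(D_j))=1+\tfrac12 D_j(D_j-K)=1+\tfrac12(-1-1)=0$. The identical computation yields, for $\cf=\co_X(D_j+C_i)$, the Hilbert-polynomial identity $\chi(\cf((n-1)C_i))=n(n+1)/2$ of Proposition \ref{P:excellent}. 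Moreover $h^2(\co_X(D_j))=h^0(\co_X(K-D_j))=0$, since $K-D_j\equiv-v$ has $K$-degree $0$ and is numerically nonzero, hence non-effective against the ample $K$; with $\chi=0$ this forces $h^0(D_j)=h^1(D_j)$, and likewise $h^2(\co_X(D_j+C_i))=0$. By Proposition \ref{P:excellent} it therefore suffices to single out the $D_j$ with $h^0(\co_X(D_j))=0$ (so that $H^i(\co_X(D_j))=0$ for all $i$) and, among these, those with $h^0(\co_X(D_j+C_i))=1$: for such $D_j$ the sheaf $\cf$ is excellent, and excellence on the smooth $X$ gives the cohomology vanishings required by Definition \ref{D:pre-spectral}, so that $(X,C_i,D_j+C_i)$ is a pre-spectral datum of rank one.

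The main obstacle is the exact effectivity count: that precisely $1080$ of the $1200$ classes $K+v+k\tau$ are non-effective, and that precisely $840$ of these further satisfy $h^0(\co_X(D_j+C_i))=1$, i.e.\ $h^1(\co_X(D_j+C_i))=0$. The natural tool is again the cover, where the $\sigma$-eigenspace decomposition gives $\bigoplus_{k}H^0(X,\co_X(K+v+k\tau))=H^0(Y,\pi^{*}\co_X(K+v))$ and an analogous identity for $2K+v$; this reduces the counts to sections of explicit bundles on the invariant quintic $Y$, organised by the $\dz/5$-action and by the Weyl-group combinatorics of the $E_8$ roots. Deciding which roots $v$ (and which torsion shifts) make $K+v+k\tau$ effective, and then controlling $h^1$ of the twists $2K+v+(k+i)\tau$, is the delicate step: it is precisely here that the geometry of the general Godeaux surface, beyond its purely numerical data, must be invoked to pin down $1200-1080=120$ and $1080-840=240$.
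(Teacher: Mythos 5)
Your setup coincides with the paper's: the same $\dz_5$-quotients of invariant quintics, the same four curves $C_i\in|K_X+i\tau|$ (your eigenspace argument for $h^0=1$ is a legitimate variant of the paper's "only the four coordinate planes are $G$-invariant" argument), the same $1200$ classes $D_j=K_X+E_j$ built from the $240$ roots of $K_X^{\perp}\simeq -E_8$ and the five torsion twists, and the same Riemann--Roch computations for $(iii)$, $(iv)$ and for $h^2$-vanishing. But everything up to that point is the routine part. The actual content of the theorem is the two lower bounds ($1080$ and $840$), and this is exactly what you defer as "the delicate step" without supplying an argument; so the proposal has a genuine gap, not a different proof. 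Moreover, you misread the target: the theorem asserts \emph{at least} $1080$ and \emph{at least} $840$, whereas your plan (decide exactly which classes $K_X+v+k\tau$ are effective via eigenspace decompositions on the quintic and "Weyl-group combinatorics") aims at an exact classification that is both harder and unnecessary.

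What the paper actually does for these bounds is quite different from your sketch. For $1080$: partition $\mathcal D$ into $120$ orbits $\mathcal D_l=\{K_X\pm E+\alpha\,:\,\alpha\in \mathrm{Tor}(X)\}$ of size $10$ and show each orbit contains at most one effective ("bad") class. Any effective member is an irreducible reduced curve $M$ because $(M,K_X)_X=1$ with $K_X$ ample. Two bad members on the same side, $K_X+E+\alpha$ and $K_X+E+\alpha'$, give distinct irreducible curves with $(M_1,M_2)_X=(M_1^2)_X=-1$, impossible. Two bad members on opposite sides give $(M_1,M_2)_X=3$; then $\varphi^{-1}(M_1\cup M_2)\in|2K_Q|$ is cut on the quintic $Q$ by a $G$-invariant quadric $S$, and the proof rules this out by computing $(\widetilde D_1,\widetilde D_2)_S=15$ on $S$ in both cases ($S\simeq \dpp^1\times\dpp^1$: $5(m+n)-2mn=15$; $S$ a cone, resolved by $\mathbb F_2$: $m(m-2n)+5n=15$), and showing the few integer solutions contradict $G$-invariance and irreducibility of $M_1$. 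For $840$: one first proves that $\mathrm{Tor}(X)\hookrightarrow \Pic(C_j)$ (using the pairwise intersection points of the $C_i$ and $g(C_j)=2$); then, if $D=K_X+E$ is good and $h^0(X,\co_X(K_X+D+\alpha))=2$, restriction to $C_j\in|K_X+\alpha|$ forces $\co_{C_j}(E)\simeq\co_{C_j}$, because $K_{C_j}$ is the unique degree-two class with two sections on a genus-two curve; combined with the torsion injectivity this shows that for fixed $C_i$ at most one torsion twist on each side of each orbit can have $h^0(\co_X(D_j+C_i))=2$, yielding $1080-2\cdot 120=840$. None of these ideas (the orbit decomposition, the invariant-quadric intersection analysis, the torsion injectivity into $\Pic(C_j)$, the genus-two uniqueness of $K_{C_j}$) appears in your proposal, and without them the counts $120$ and $240$ that you correctly guess remain unjustified.
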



\proof
Denote by $$\begin{array} {llll} N=\{ &  \overline n_1=(5,0,0,0),\, \, \overline n_2=(3,0,1,1), & \overline n_3= (2,1,2,0), & \overline n_4=(2,2,0,1), \\ & \overline n_5=(1,3,1,0),\ \, \overline n_6=(1,1,0,3), & \overline n_7= (1,0,2,2), & \overline n_8= (0,5,0,0), \\
 & \overline n_9=(0,0,5,0),\, \, \overline n_{10}=(0,0,0,5), & \overline n_{11}=(0,2,1,2), &
 \overline n_{12}=(0,1,3,1)\} \end{array}$$
the set of non-negative integer solutions of the equations
\begin{equation} \label{equ} \sum_{i=1}^4n_i=5; \qquad \qquad \sum_{i=1}^4in_i\equiv 0\, \text{mod}\, 5
\end{equation}
and consider the family $\overline{\mathcal Q} \subset \mathbb P^3\times \mathbb P^{11}$  of quintics $Q_{\overline a}\subset \mathbb P^3$ given by equation
$$ \sum_{\overline n_i\in N} a_{i}\overline z^{\overline n_i}=0,$$
where $\overline z=(z_1:z_2:z_3:z_4)\in \mathbb P^3$, $\overline z^{\overline n_i}=z_1^{n_1}z_2^{n_2}z_3^{n_3}z_4^{n_4}$ for $\overline n_i=(n_1,n_2,n_3,n_4)$ and $\overline a=(a_1:\dots :a_{12})\in \mathbb P^{11}$.

Denote by $\overline G$  the subgroup of $PGL(4,\mathbb C)$ generated by elements $g_j\in PGL(4,\mathbb C)$, $j=1,2,3$, acting on $\mathbb P^3$ as follows: $g_j(z_j)=\varepsilon z_j$ and $g_j(z_k)=z_k$ for $k\neq j$, where $\varepsilon= e^{2\pi i/5}$.
It is easy to see that the cyclic subgroup
$G\simeq \mathbb Z_5$ of the group $\overline G$ acting on the projective space $\mathbb P^3$ as follows:
$$(z_1:z_2:z_3:z_4)\mapsto (\varepsilon z_1:\varepsilon^2 z_2:\varepsilon^3z_3:\varepsilon^4z_4)$$
 leaves invariant each $Q_{\overline a}\subset \overline{\mathcal Q}$.

Let $\mathcal Q$ be the subfamily of the family $\overline{\mathcal Q}$ consisting of smooth quintics such that $G$ acts on each $Q_{\overline a}\subset \mathcal Q$ freely.
Then  the factor spaces $X_{\overline a}=Q_{\overline a}/G$, $Q_{\overline a}\subset \mathcal Q$, are smooth surfaces of general type (the Godeaux surfaces; see, for example, \cite{B-H-P-V}) which have the following invariants:
$p_g=q=0$, $K_{X_{\overline a}}^2=1$, and $Tor(X_{\overline a}):=Tor(Pic(X_{\overline a}))\simeq\mathbb Z_5$.

Denote by $\mathcal Q_0$ the subfamily of $\mathcal Q$ consisting of the quintics $Q_{\overline a}$ such that the coordinate planes $\{ z_i=0\}$, $i=1,\dots, 4$, meet transversally $Q_{\overline a}$, and let $A\subset \mathbb P^{11}$ (resp., $A_0\subset \mathbb P^{11}$) be the image of $\mathcal Q$ (resp., $\mathcal Q_0$) under the projection of $\mathbb P^3\times \mathbb P^{11}$ to the second factor $\mathbb P^{11}$. Note that $A$ and $A_0$ are non-empty  Zariski open subsets of $\mathbb P^{11}$, since Fermat quintic (denote it by $Q_F$) is a member of the family $\mathcal Q_0$.
Fermat quintic $Q_F$ is given by $\sum_{j=1}^4 z_j^5=0$. The group $\overline G$ leaves invariant the surface $Q_F$. In \cite{K-O}, it was shown that the factor-space $Q_F/\overline G =\mathbb P^2$ and the factorization map $\zeta: Q_F\to \mathbb P^2$ is branched in four lines $L_1,\dots, L_4$ in general position with multiplicity five. Let  $X_F=Q_F/G$ be the corresponding Godaux surface and $\varphi :Q_F\rightarrow X_F$ be the factor map. The map $\zeta$ factorizes through $\varphi$, i.e., there is a map $\psi: X_F\to \mathbb P^2$ such that $\zeta=\psi\circ\varphi$. The map $\psi$ is a Galois covering with Galois group $H=\overline G/G\simeq \mathbb Z_5^2$. It  is branched in lines $L_1,\dots, L_4$ with multiplicity five and $\psi^*(L_j)=5C_j$ for $j=1,\dots, 4$ and it is given by the epimorphism $\psi^*(\pi_1(\mathbb P^2\setminus (\cup L_j))\to H$
 Note that it was shown in \cite{K-O} that the Godaux surface $X_F$ is the normalization of the surface in $\mathbb P^4$ given by
$$\begin{array}{ccl}
z_4^5 & = & z_1z_2^2(z_1+z_2+z_3)^2, \\
z_5^5 & = & z_2z_3(z_1+z_2+z_3)^3.
\end{array}
$$

\begin{claim} \label{cl1}  For any $\overline a_1$, $\overline a_2\in A$, the surfaces $X_{\overline a_1}$ and $X_{\overline a_2}$ are isomorphic if and only if there is
$$ \Lambda=\left( \begin{array}{cccc} \lambda_1, & 0, & 0, & 0 \\
0,& \lambda_2, & 0,& 0 \\
0, & 0, & \lambda_3, & 0 \\
0, & 0, & 0, & \lambda_4 \end{array} \right)
\in PGL(4,\mathbb C) $$
such that $\Lambda(Q_{\overline a_1})=Q_{\overline a_2}$.
 \end{claim}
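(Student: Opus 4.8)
The plan is to prove both implications by passing to the étale quintic covers $Q_{\overline a_i}\to X_{\overline a_i}$ and converting an abstract isomorphism of Godeaux surfaces into a linear map of $\mathbb P^3$. The \emph{if} direction is immediate: if $\Lambda=\mathrm{diag}(\lambda_1,\lambda_2,\lambda_3,\lambda_4)$ satisfies $\Lambda(Q_{\overline a_1})=Q_{\overline a_2}$, then $\Lambda$ commutes with the generator $g=\mathrm{diag}(\varepsilon,\varepsilon^2,\varepsilon^3,\varepsilon^4)$ of $G$, hence is $G$-equivariant and descends to an isomorphism $X_{\overline a_1}=Q_{\overline a_1}/G\to Q_{\overline a_2}/G=X_{\overline a_2}$.

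For the \emph{only if} direction, the first step is to recover the quintic canonically from the surface. Each $X=X_{\overline a}$ is a minimal surface of general type with $p_g=q=0$, $K_X^2=1$ and $\mathrm{Tor}(\mathrm{Pic}\,X)\simeq\mathbb Z_5$, and $\varphi:Q\to X$ is its universal (étale $\mathbb Z_5$-)cover, so $Q$ is attached to $X$ intrinsically. Since $\varphi$ is étale we have $\varphi^*K_X=K_Q$, and by adjunction for a quintic surface $K_Q=\mathcal O_Q(1)$ with $h^0(Q,K_Q)=4$; thus the canonical map reconstructs $Q\hookrightarrow\mathbb P(H^0(Q,K_Q)^*)=\mathbb P^3$ as the original quintic. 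An isomorphism $f:X_{\overline a_1}\to X_{\overline a_2}$ lifts, uniquely up to a deck transformation, to an isomorphism $\tilde f:Q_{\overline a_1}\to Q_{\overline a_2}$ with $\tilde f^*K_{Q_2}=K_{Q_1}$; the induced linear isomorphism of the four-dimensional canonical spaces is an element $\Lambda\in PGL(4,\mathbb C)$ with $\Lambda(Q_{\overline a_1})=Q_{\overline a_2}$. So every isomorphism of the surfaces is induced by a projective-linear map of the covers.

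The second step pins down the shape of $\Lambda$. The lift intertwines the two deck actions, $\tilde f\circ g=g^{c}\circ\tilde f$ for some $c\in(\mathbb Z/5)^*$ (the action of $f$ on the $5$-torsion), so $\Lambda g\Lambda^{-1}=g^{c}\in G$ and $\Lambda$ lies in the normalizer $N_{PGL(4,\mathbb C)}(G)$. Since $g$ has four distinct eigenvalues, its centralizer is the diagonal torus $T$, and one checks that $N_{PGL(4,\mathbb C)}(G)=T\rtimes\langle\sigma\rangle$ consists of monomial matrices whose coordinate permutation is multiplication by a unit, $\pi(j)\equiv jc^{-1}\pmod 5$ on the index set $\{1,2,3,4\}$. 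Hence $\Lambda$ is a monomial matrix, and it is diagonal exactly when $c=1$.

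The main obstacle is the final step: ruling out the permutation part, equivalently showing that $f$ acts trivially on $\mathrm{Tor}(\mathrm{Pic}\,X)$. I would attack this through the $G$-eigenspace decomposition $H^0(Q,K_Q)=\bigoplus_{j=1}^{4}H^0(X,K_X\otimes\eta^{j})$, each summand one-dimensional and spanned by $z_j$, together with the explicit combinatorics of the monomial support $N$: the unique $G$-invariant quintic relation must be carried to the corresponding relation, and one must track how the coefficients indexed by $N$ are permuted by $\pi$ and constrained by the smoothness and transversality hypotheses defining $A$. This is delicate precisely because the weighting condition $\sum i n_i\equiv 0\pmod 5$ is itself preserved by each unit-multiplication $\pi$, so the permutation symmetry is not excluded on the level of the ambient family; establishing that it cannot relate two distinct members beyond the action of the diagonal torus $T$ — so that $\Lambda$ must in fact be diagonal — is where the real work lies, while the reduction to a monomial matrix is formal.
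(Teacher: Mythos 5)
Your \emph{if} direction and your reduction of the \emph{only if} direction are correct, and up to that point you follow the same route as the paper: pass to the universal covers, use $K_{Q}=\mathcal O_{Q}(1)$ to see that any isomorphism of the quintics is induced by some $\Lambda\in PGL(4,\mathbb C)$, and use the deck-group structure to constrain $\Lambda$. The genuine gap is the step you yourself flag as ``where the real work lies'': you never show that the permutation part of the monomial matrix is trivial, and your sketched attack (tracking the eigenspace decomposition and the combinatorics of $N$) is not carried out. As written, your argument establishes only the weaker statement that $X_{\overline a_1}\cong X_{\overline a_2}$ if and only if $Q_{\overline a_1}$ and $Q_{\overline a_2}$ are related by a \emph{monomial} matrix lying in the normalizer $N_{PGL(4,\mathbb C)}(G)$, so the claim as stated is not proven.

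That said, your caution is exactly right, and the comparison with the paper is instructive. The paper's proof simply asserts that a lift of an isomorphism ``must commute with $\Lambda_5$,'' i.e.\ it treats the lift as strictly $G$-equivariant; as you observe, a lift of an isomorphism of the quotients is only equivariant up to an automorphism $g\mapsto g^{c}$ of $G$, which yields normalization, not centralization. In fact the missing step cannot be filled, because the \emph{only if} direction is false as literally stated: the coordinate permutation $\sigma$ with $\sigma(j)\equiv 3j \pmod 5$ satisfies $\sigma g\sigma^{-1}=g^{2}$, preserves the exponent set $N$, hence carries the family $\overline{\mathcal Q}$ into itself and descends to an isomorphism $X_{\overline a}\to X_{\sigma(\overline a)}$; but for $Q_{\overline a}\colon z_1^5+z_2^5+z_3^5+z_4^5+t\,z_1^3z_3z_4=0$ with generic $t$ (smooth, with free $G$-action, so $\overline a\in A$), the quintics $Q_{\overline a}$ and $\sigma(Q_{\overline a})$ have different monomial supports ($\overline n_2$ versus $\overline n_{12}$), and a diagonal $\Lambda$ can never change the support of the coefficient vector. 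So these two surfaces are isomorphic although no diagonal $\Lambda$ exists. The statement you actually proved --- $\Lambda$ monomial in the normalizer of $G$ --- is the correct form of Claim \ref{cl1}, and it is all that Theorem \ref{T:example} needs: the normalizer is the three-dimensional diagonal torus extended by the finite group $(\mathbb Z/5\mathbb Z)^{*}$, so the count $11-3=8$ of moduli and the existence of an eight-dimensional family of pairwise non-isomorphic Godeaux surfaces are unaffected.
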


\proof The surfaces  $X_{\overline a_1}$ and $X_{\overline a_2}$ are isomorphic if and only if their universal coverings $Q_{\overline a_1}$ and $Q_{\overline a_2}$ are isomorphic as $G$-manifolds. An isomorphism between quintics $Q_{\overline a_1}$ and $Q_{\overline a_2}$ can be given by linear transformation
$\Lambda$ of $\mathbb P^3$, since the canonical class of a smooth quintic is its hyperplane section. Therefore, if $\Lambda$ preserves the structure of $G$-manifolds on $Q_{\overline a_1}$ and $Q_{\overline a_2}$, it must commute with projective transformation
$$\Lambda_5 = \left( \begin{array}{cccc} \varepsilon, & 0, & 0, & 0 \\
0,& \varepsilon^2, & 0,& 0 \\
0, & 0, & \varepsilon^3, & 0 \\
0, & 0, & 0, & \varepsilon^4 \end{array} \right) .$$ \qed

\begin{claim} \label{cl2} Let $X=X_{\overline a_0}$, $\overline a_0\in A_0$, be one of the Godeaux surfaces. There are four curves $C_j\subset X$, $j=1,\dots, 4$, numerically equivalent to $K_X$ and satisfying condition $(i)$ from Theorem \ref{T:example}.

For each point $p\in X$ there are not three curves from the set $\{ C_1,\dots, C_4\}$ passing through $p$.
\end{claim}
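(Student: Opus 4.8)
The plan is to realize the four curves as quotients of the coordinate hyperplane sections of the invariant quintic $Q=Q_{\overline a_0}$. For $j=1,\dots,4$ set $\tilde C_j=Q\cap\{z_j=0\}$. Since the generator of $G$ multiplies $z_j$ by $\varepsilon^j$, each coordinate plane $\{z_j=0\}$ is $G$-invariant, so $\tilde C_j$ is a $G$-invariant divisor on $Q$. Because $\overline a_0\in A_0$ the plane $\{z_j=0\}$ meets $Q$ transversally, so $\tilde C_j$ is a smooth plane quintic; and because $G$ acts freely on $Q$ it acts freely on $\tilde C_j$, so the image $C_j:=\varphi(\tilde C_j)=\tilde C_j/G$ is a smooth curve on $X$ with $\varphi^*C_j=\tilde C_j$ (here $\varphi:Q\to X$ is the degree-$5$ \'etale quotient). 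The $\tilde C_j$ lie in distinct planes, hence the $C_j$ are pairwise distinct.

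Next I would read off the numerical invariants from $\varphi$. By adjunction $K_Q=\mathcal O_Q(1)=\mathcal O_Q(\tilde C_j)$, and since $\varphi$ is \'etale $K_Q=\varphi^*K_X$; as $\varphi^*C_j=\tilde C_j\equiv K_Q=\varphi^*K_X$ and intersection numbers on $X$ are $\tfrac15$ of their pullbacks to $Q$, this forces $C_j\equiv K_X$. In particular $C_j^2=\tilde C_j^2/5=5/5=1$, and adjunction (or Riemann--Hurwitz applied to the unramified degree-$5$ cover $\tilde C_j\to C_j$ with $g(\tilde C_j)=6$) gives $g(C_j)=2$. For condition $(i)$, smoothness is established above; ampleness of $\mathcal O_X(C_j)$ follows from ampleness of $\varphi^*\mathcal O_X(C_j)=\mathcal O_Q(1)$ together with the fact that a line bundle on $X$ is ample iff its pullback under the finite surjection $\varphi$ is ample. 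For $h^0(X,\mathcal O_X(C_j))=1$ I would use $H^0(X,\mathcal O_X(C_j))=H^0(Q,\mathcal O_Q(1))^G$ for the $G$-linearization of $\mathcal O_Q(1)$ under which the pulled-back section cutting out $\tilde C_j$, namely $z_j$, is invariant. From the restriction sequence $0\to\mathcal O_{\mathbb P^3}(-4)\to\mathcal O_{\mathbb P^3}(1)\to\mathcal O_Q(1)\to 0$ one gets $H^0(Q,\mathcal O_Q(1))=\langle z_1,z_2,z_3,z_4\rangle$; in this linearization the generator of $G$ acts on $z_i$ through the character $\varepsilon^{\,i-j}$, which is trivial exactly when $i=j$ because $\varepsilon^1,\dots,\varepsilon^4$ are distinct, so the invariant subspace is $\langle z_j\rangle$, one-dimensional.

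The second assertion is the clean part. Suppose a point $p\in X$ lay on three of the curves, say $C_i,C_j,C_k$. Choosing a preimage $\tilde p\in\varphi^{-1}(p)\subset Q$ and using $\varphi^{-1}(C_l)=\tilde C_l$, we get $\tilde p\in\tilde C_i\cap\tilde C_j\cap\tilde C_k$, i.e. $z_i(\tilde p)=z_j(\tilde p)=z_k(\tilde p)=0$. Three vanishing coordinates in $\mathbb P^3$ force $\tilde p$ to be the coordinate vertex with only its remaining ($l$-th) coordinate nonzero. But the four coordinate vertices are exactly the fixed points of the $G$-action on $\mathbb P^3$ (a point fixed by the generator must have all its nonzero coordinates sharing one $G$-weight, and $\varepsilon^1,\dots,\varepsilon^4$ are distinct), and $G$ acts freely on $Q$; hence this vertex does not lie on $Q$, contradicting $\tilde p\in Q$. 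Therefore no three of $C_1,\dots,C_4$ share a common point.

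I expect the delicate bookkeeping to be the identification $h^0(X,\mathcal O_X(C_j))=1$, where one must track the correct $G$-linearization on $\mathcal O_Q(1)$ so that precisely one coordinate survives as an invariant; the numerical computations and the descent of ampleness along $\varphi$ are routine. The geometrically essential observation, conversely, is that a triple intersection of the $C_j$ can only occur over a $G$-fixed point of $\mathbb P^3$, which the freeness hypothesis built into the definition of $A_0$ excludes.
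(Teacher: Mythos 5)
Your proof is correct, and it inverts the logical direction of the paper's argument while sharing its geometric core (the coordinate-plane sections $Q\cap\{z_i=0\}$, and the fact that coordinate vertices, being $G$-fixed, cannot lie on $Q$). The paper works top-down on $X$: it fixes a generator $\alpha_1$ of $Tor(X)\simeq\mathbb Z_5$, proves $h^0(X,\mathcal O_X(K_X+j\alpha_1))>0$ by Serre duality plus Riemann--Roch, and then identifies any member of these four linear systems, after pullback along $\varphi$, with one of the only four $G$-invariant hyperplane sections of $Q$; this single identification yields smoothness, $g=2$ and $h^0=1$ simultaneously, while ampleness is deduced from the absence of $(-2)$-curves on $X$ (they are simply connected, hence would lift to the canonically embedded quintic $Q$, which has none), so that any divisor numerically equivalent to $K_X$ is ample. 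You instead build the curves bottom-up as $\varphi(Q\cap\{z_j=0\})$ and verify each property by equivariant descent: ampleness by descent along the finite surjection $\varphi$, and $h^0=1$ by computing $G$-invariant sections of $\mathcal O_Q(1)$ in the linearization making $z_j$ invariant, where the weights $\varepsilon^{i-j}$ of the $z_i$ are pairwise distinct. What your route buys: it is self-contained (no Riemann--Roch existence step, no $(-2)$-curve lemma), and the linearization computation is arguably the cleanest way to see $h^0=1$. What the paper's route buys: it shows along the way that the four curves represent precisely the four nontrivial torsion twists $K_X+j\alpha_1$ of the canonical class, which is exactly what the later Claims \ref{cl5}, \ref{cl6} and \ref{cl7} consume (e.g.\ the identity $i_{j*}(C_{j+2})-i_{j*}(C_{j+1})=\alpha_1$), and its $(-2)$-curve argument also establishes ampleness of $K_X$ itself, which is invoked again in the proof of Claim \ref{cl4}. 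Your construction recovers the torsion statement with one extra remark --- the four bundles $\mathcal O_X(C_j)$ are pairwise distinct because each has $h^0=1$, and none equals $K_X$ because $p_g=0$, so they exhaust the nontrivial torsion twists --- but as written you do not record it; this does not affect the present claim, whose statement your argument proves in full. The triple-point argument is essentially identical in the two proofs.
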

\proof We have $X=Q/G$, where $Q=Q_{\overline a_0}$. Denote by $\varphi: Q\to X$ the factor map defined by the action of $G$.

Let $\alpha_1$ be a generator of the group $Tor(X)\simeq \mathbb Z_5$. By Serre's duality,  for $j=1,\dots, 4$ we have $H^2(X,\mathcal O_X(K_X+j\alpha_1))=0$. Therefore, it follows from Riemann-Roch Theorem  that $\dim H^0(X,\mathcal O_X(K_X+j\alpha_1))>0$.

Let
$C_j\in |K_X+j\alpha_1 |$, $j=1,\dots,4$. The curves $\varphi^{-1}(C_j)$ belong to the canonical class $K_Q$ of $Q$. Therefore $\varphi^{-1}(C_j)$ are hyperplane sections of $Q$ invariant under the action of $G\simeq \mathbb Z_5$. But, it is easy to see that there are only four planes, namely, $\{ z_i=0\}$, invariant under the action of $\mathbb Z_5$ on $\mathbb P^3$.
Therefore the curves $C_j$, $j=1,\dots, 4$, are the images of smooth curves (the intersections of planes $\{ z_i=0\}$ with $Q$), since $\overline a_0\in A_0$.
Hence, $\dim H^0(X,\mathcal O_X(K_X+j\alpha_1))=1$ and the curves $C_j$ are smooth of genus  $g(C_j)=2$.

The quintic $Q$ is  smooth and its imbedding in $\mathbb P^3$ is given by its canonical class. Therefore there are not $(-2)$-curves lying in $Q$. It follows from this that $X$ can not have $(-2)$-curves, since $\varphi$ is unramified covering and $(-2)$-curves are simply connected. Therefore  the divisors $C_j$ are ample, since they are numerically equivalent to the canonical class of $X$.

For each point $p\in X$ there are not three curves from the set $\{ C_1,\dots, C_4\}$ passing through $p$, since the curves $C_j$ are the images of coordinate plane sections of $Q$ under the map $\varphi$.\qed \\

\begin{claim}\label{cl5} The morphism $i_{j*}:Tor(X)\to Pic(C_j)$, induced by imbedding $i_j: C_j\hookrightarrow X$, is a monomorphism.
\end{claim}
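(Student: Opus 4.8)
The plan is to exploit the fact that $Tor(X)\simeq \mathbb{Z}_5$ has no proper nontrivial subgroups, so a homomorphism out of it is injective exactly when its kernel is trivial, i.e.\ exactly when a generator $\alpha_1$ has nonzero image. It therefore suffices to show that $i_{j*}(\alpha_1)=\co_X(\alpha_1)|_{C_j}\not\simeq \co_{C_j}$ in $Pic(C_j)$; in fact I will show it is a torsion class of order exactly $5$.

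First I would invoke the dictionary between torsion line bundles and connected étale cyclic covers. Since $q=p_g=0$ for these Godeaux surfaces one has $H^1(X,\co_X)=H^2(X,\co_X)=0$, so the exponential (equivalently Kummer) sequence identifies $Tor(X)=Pic(X)[5]$ with $\Hom(\pi_1(X),\mathbb{Z}_5)$, under which the connected étale $\mathbb{Z}_5$-cover attached to the generator $\alpha_1$ is precisely the universal cover $\varphi:Q\to X$. Under the same dictionary, restriction of line bundles to $C_j$ corresponds to precomposition with the map $\pi_1(C_j)\to\pi_1(X)$ induced by $i_j$; equivalently, the cover classified by $\alpha_1|_{C_j}$ is the pullback $Q\times_X C_j=\varphi^{-1}(C_j)\to C_j$.

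The crucial geometric input is already furnished by Claim \ref{cl2}: $\varphi^{-1}(C_j)$ is the intersection of a coordinate plane $\{z_i=0\}$ with the smooth quintic $Q$, and since $\overline a_0\in A_0$ this is a smooth plane quintic, hence irreducible and in particular \emph{connected} (of genus $6$). Because $G\simeq\mathbb{Z}_5$ acts freely, $\varphi^{-1}(C_j)\to C_j$ is a connected étale $\mathbb{Z}_5$-cover, so the classifying homomorphism $\pi_1(C_j)\to\mathbb{Z}_5$ is surjective; equivalently $\alpha_1|_{C_j}$ has order exactly $5$. In particular $\alpha_1|_{C_j}\not\simeq\co_{C_j}$, the kernel of $i_{j*}$ is trivial, and $i_{j*}$ is a monomorphism.

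I expect the only delicate point to be the precise matching of the two restriction operations—restriction of torsion bundles on the Picard side versus pullback of covers (precomposition) on the $\pi_1$ side—together with the standard identification $Tor(X)\simeq\Hom(\pi_1(X),\mathbb{Z}_5)$ that relies on $H^1(X,\co_X)=0$. Once this bookkeeping is in place everything reduces to the single fact that $\varphi^{-1}(C_j)$ is connected, which is immediate from Claim \ref{cl2}. A purely cohomological variant avoiding $\pi_1$ would instead use the eigensheaf splitting $\varphi_*\co_Q\simeq\bigoplus_{i=0}^{4}\co_X(-i\alpha_1)$ and its restriction to $C_j$, reading connectedness of $\varphi^{-1}(C_j)$ off $h^0\bigl(\bigoplus_{i=0}^{4}(\alpha_1|_{C_j})^{-i}\bigr)=1$, which forces $\alpha_1|_{C_j}$ to have order $5$; either route isolates the same key fact.
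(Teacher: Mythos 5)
Your proof is correct, but it takes a genuinely different route from the paper's. Both arguments exploit that $Tor(X)\simeq \mathbb{Z}_5$ is simple, so it suffices to show $i_{j*}(\alpha_1)\neq 0$; the divergence is in how this non-vanishing is obtained. The paper stays entirely on the surface: writing $p_{j,l}=C_j\cap C_l$, it computes $i_{j*}(\alpha_1)=\co_{C_j}(p_{j,j+2}-p_{j,j+1})$ as a difference of two intersection points, uses the last assertion of Claim \ref{cl2} (no three of the curves $C_1,\dots,C_4$ are concurrent) to see that these two points are distinct, and concludes since two distinct points on a curve of genus $2$ are never linearly equivalent. You instead pass through the dictionary between torsion line bundles and cyclic \'etale covers: the generator $\alpha_1$ classifies the universal cover $\varphi:Q\to X$, restriction to $C_j$ classifies the pullback cover $\varphi^{-1}(C_j)\to C_j$, and connectedness of the smooth plane quintic $\varphi^{-1}(C_j)$ forces $\alpha_1|_{C_j}$ to have order exactly $5$. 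What each buys: the paper's argument is elementary (pure divisor arithmetic plus $g(C_j)=2$) and is precisely what the concurrency statement in Claim \ref{cl2} was set up for; yours bypasses that concurrency statement entirely, needing only the identification of $\varphi^{-1}(C_j)$ with a coordinate-plane section of $Q$ (established inside the proof of Claim \ref{cl2}), treats all four nonzero torsion classes uniformly, and generalizes to any curve whose preimage in the universal cover is connected --- indeed connectedness here is automatic even without smoothness, since any plane curve is connected by B\'ezout. Two cosmetic remarks: the identification $Pic(X)[5]\simeq \Hom(\pi_1(X),\mathbb{Z}_5)$ does not actually require $H^1(X,\co_X)=0$ (it follows from the Kummer sequence on any smooth complex projective variety), so your hypothesis there is an over-qualification, though harmless since it holds for these Godeaux surfaces; and your eigensheaf variant via $h^0\bigl(\bigoplus_{i=0}^{4}(\alpha_1|_{C_j})^{-i}\bigr)=1$ is a clean way to package the same connectedness input without invoking $\pi_1$ at all.
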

\proof Denote by $p_{j,l}=C_j\cap C_l$. Then
$$p_{j,j+2}-p_{j,j+1}=i_{j*}(C_{j+2})-i_{j*}(C_{j+1})= i_{j*}(K_X+(j+2)\alpha_1)-i{_*}(K_X+(j+1)\alpha_1)=\alpha_1$$
(here $j+l$ in the indexes  of $p_{j,j+l}$ and $C_{j+l}$ are considered modulo 5). By Claim \ref{cl2}, the divisor $p_{j,j+2}-p_{j,j+1}\neq 0$ in $Pic(C_j)$, since $g(C_j)=2$. \qed

\begin{claim} \label{cl3} There are 1200 different elements $E_j\in Pic(X)$ such that $(E_j^2)_X=-2$ and $(E_j,K_X)_X=0$.
\end{claim}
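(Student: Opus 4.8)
The plan is to reduce the count to a purely lattice-theoretic computation on $\Pic(X)$, exploiting that $X$ is a Godeaux surface with $p_g=q=0$.

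First I would record the relevant invariants. Since $X=Q/G$ with $G\simeq\mathbb Z_5$ acting freely on the smooth quintic $Q\subset\mathbb P^3$ (for which $\chi_{top}(Q)=c_2=5^3-4\cdot5^2+6\cdot5=55$), the topological Euler number is $\chi_{top}(X)=55/5=11$. Because $q=p_g=0$ one has $b_1(X)=0$, hence $b_2(X)=9$, and the exponential sequence gives $\Pic(X)\cong H^2(X,\mathbb Z)$ with $Tor(\Pic(X))\simeq\mathbb Z_5$ (in particular $\Pic^0(X)=0$, so $\Pic(X)$ is its own Néron–Severi group). Applying the index theorem, $\tau(X)=\tfrac{1}{3}(K_X^2-2\chi_{top}(X))=\tfrac{1}{3}(1-22)=-7$, so the intersection form has signature $(b_2^+,b_2^-)=(1,8)$.

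Next I would identify the intersection lattice. Set $L:=\Pic(X)/Tor$. By Poincaré duality $L$ is unimodular; it is odd since $K_X^2=1$; and it has signature $(1,8)$, hence $L\cong\langle1\rangle\oplus\langle-1\rangle^{\oplus8}=I_{1,8}$. The canonical class $K_X$ is characteristic (Wu formula / adjunction) with $K_X^2=1$, so $\langle K_X\rangle$ is a primitive unimodular summand and one gets the orthogonal splitting $L=\langle K_X\rangle\perp K_X^{\perp}$. The complement $K_X^{\perp}$ is then unimodular of rank $8$; it is negative definite (as $K_X$ spans the positive part $b_2^+=1$), and it is even, because $v\cdot K_X=0$ forces $v^2\equiv v\cdot K_X\equiv0\pmod 2$ by the Wu formula. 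Since the unique even negative-definite unimodular lattice of rank $8$ is $E_8(-1)$, we conclude $K_X^{\perp}\cong E_8(-1)$.

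Then the classes $v\in L$ with $v^2=-2$ and $(v,K_X)_X=0$ are exactly the vectors of square $-2$ in $K_X^{\perp}\cong E_8(-1)$, i.e.\ the roots of $E_8$, of which there are $240$. Finally I would account for torsion: because intersection numbers vanish identically on $Tor(X)$, both conditions depend only on the image in $L$, and each root lifts to exactly $5$ distinct classes in $\Pic(X)$ differing by an element of $Tor(X)\simeq\mathbb Z_5$, all of which again satisfy $E^2=-2$ and $(E,K_X)_X=0$. Hence the number of such $E_j$ equals $5\cdot240=1200$. The main obstacle is the lattice identification, namely pinning down $L\cong I_{1,8}$ and $K_X^{\perp}\cong E_8(-1)$, which combines unimodularity from Poincaré duality, the characteristic property and square of $K_X$, and the classification of definite even unimodular lattices in rank $8$; once this is in place the count is just the product of the $240$ roots of $E_8$ with the order $5$ of $Tor(X)$.
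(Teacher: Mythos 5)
Your proof is correct and follows essentially the same route as the paper: identify $\Pic(X)/Tor(X)$ as an odd unimodular lattice of signature $(1,8)$, split off $\mathbb{Z}K_X$ using $K_X^2=1$ and the characteristic property of $K_X$, recognize $K_X^{\perp}$ as the even negative definite unimodular rank-$8$ lattice $-E_8$ with its $240$ roots, and multiply by the $5$ torsion classes to get $1200$. The only cosmetic differences are that you compute $e(X)=11$ from the Euler characteristic of the quintic covering rather than from Noether's formula, and the signature via the index theorem rather than by invoking the Hodge index theorem directly.
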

\proof Since $\chi(X)=1-q+p_g=1$ and $K_X^2=1$, then, by Noether's formula, the topological Euler characteristic $e(X)=11$ and, consequently,
$\dim H^2(X,\mathbb C)=9$. We have  $H^2(X,\mathbb C)=H^{1,1}(X,\mathbb C)$, since $p_g=0$ and therefore
$Pic(X)\simeq H^2(X,\mathbb Z)$. Pick
$P=Pic(X)/Tor(X)$.
By Hodge index theorem, $P$ is an unimodular odd lattice of signature $\sigma=(1,8)$. It follows from equalities $K_X^2=1$ and  $(K_X,D)_X\equiv (D^2)\, mod\, 2$ for each $D\in Pic(X)$ that
$P=\mathbb ZK_X\bigoplus K_X^{\bot }$, where $K_X^{\bot}$ is an even negatively defined unimodular lattice of rank 8. Therefore $K_X^{\bot}$ is isomorphic to the lattice $-E_8$ (see \cite{Serre} or \cite{Bur}) and hence there exist 240 elements $e_l\in P$, $1\leq l\leq 240$, such that $(e_l^2)=-2$ and $(K_X,e_l)=0$. The preimages $E_j$, $1\leq j\leq 1200$, in $Pic(X)$ of the elements $e_l$ are the desired elements. \qed\\

Denote by $\mathcal D$ the set of divisors $D_j=K_X+E_j$, where $E_j$ are divisors from Claim \ref{cl3}. The set $\mathcal D$ can be divided into disjoint union $\mathcal D=\bigsqcup_{l=1}^{120}\mathcal D_l$ of 120 subsets, where if $D_{j}=K_X+E_j\in \mathcal D_l$, then $K_X+E_j+\alpha\in \mathcal D_l$ and $K_X-E_j+\alpha\in\mathcal D_l$ for all $\alpha\in Tor(X)$. We say that $D_j\in \mathcal D_l$ is a {\it bad} element of $\mathcal D_l$ if $\dim H^0(X,\mathcal O_X(D_j))\geq 1$.

\begin{claim} \label{cl4}
 The curves $C_i$ from Claim \ref{cl2} and the divisors $D_j\in \mathcal D$ satisfy conditions $(ii)$ and $(iv)$ from Theorem \ref{T:example}. Moreover, each set $\mathcal D_l$ can contain at most one bad element, that is, among the divisors $D_j\in\mathcal D$ there are at least 1080 different divisors such that $\dim H^i(X,\mathcal O_X(D_j))=0$ for $i=0,1,2$.
\end{claim}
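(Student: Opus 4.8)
The plan is to dispatch the numerical and cohomological conditions first and then isolate the combinatorial heart of the statement. Conditions $(ii)$ are already contained in Claim \ref{cl2}. For $(iv)$ I would apply Riemann--Roch on the surface: since $\chi(\co_X)=1-q+p_g=1$ and, using $K_X^2=1$, $E_j\cdot K_X=0$, $E_j^2=-2$, one computes $D_j^2=(K_X+E_j)^2=-1$ and $D_j\cdot K_X=1$, whence
$$\chi(D_j)=\chi(\co_X)+\tfrac12\,D_j\cdot(D_j-K_X)=1+\tfrac12(-1-1)=0.$$
Before counting I would reduce the vanishing ``$H^i=0$ for all $i$'' to the single condition $h^0(D_j)=0$. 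First, $K_X$ is ample: $X$ is a minimal surface of general type and by Claim \ref{cl2} carries no $(-2)$-curve, so no irreducible curve meets $K_X$ in degree $0$. By Serre duality $h^2(D_j)=h^0(K_X-D_j)=h^0(-E_j)$, and since $-E_j$ is numerically nonzero with $(-E_j)\cdot K_X=0$ it cannot be effective, so $h^2(D_j)=0$ always. Together with $\chi(D_j)=0$ this gives $h^0(D_j)=h^1(D_j)$, so a divisor is ``good'' ($H^i=0$ for all $i$) precisely when it is not ``bad''. Hence the whole statement reduces to: each $\mathcal D_l$ contains at most one bad element, which then produces at least $1200-120=1080$ good divisors.

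For the key bound I would first show that a bad element is carried by a single irreducible curve. If $h^0(D)\ge1$, pick an effective $\Gamma\sim D$; since $D\equiv K_X\pm E$ numerically and $C_i\equiv K_X$, one has $\Gamma\cdot C_i=1$, while every irreducible component meets the ample $C_i$ with multiplicity $\ge1$; hence $\Gamma$ is irreducible and reduced, with $p_a(\Gamma)=1+\tfrac12(\Gamma^2+\Gamma\cdot K_X)=1$ and $\Gamma^2=-1$, and it is the unique effective divisor in its class. Now take two distinct bad elements of $\mathcal D_l$, with curves $\Gamma_1,\Gamma_2$. If their classes are numerically equal (both lifting $e$, or both lifting $-e$, and differing only by torsion), then $\Gamma_1\cdot\Gamma_2=(K_X\pm E)^2=-1<0$, impossible for two distinct irreducible curves. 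This already forces at most one bad element of each sign.

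The remaining, genuinely hard, case is two bad elements of \emph{opposite} sign, $\Gamma_1\equiv K_X+E$ and $\Gamma_2\equiv K_X-E$: here $\Gamma_1\cdot\Gamma_2=(K_X+E)(K_X-E)=K_X^2-E^2=3>0$, so the intersection argument fails. My plan is to exploit the pencil $|2K_X+\beta|$, where $\beta\in Tor(X)$ is the torsion of $\Gamma_1+\Gamma_2$. By Kodaira vanishing $h^0(2K_X+\beta)=\chi=2$, so this is a genuine pencil; it contains $\Gamma_1+\Gamma_2$ and also $C_a+C_b$ for a pair with $a+b\equiv r\pmod 5$ (where $\beta=r\alpha_1$), since $C_a+C_b\equiv 2K_X$. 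The base locus of the pencil is cut out by two such decompositions and, by Claim \ref{cl2} (no three $C_i$ concurrent), consists of points among the intersections $p_{j,l}=C_j\cap C_l$; because $\Gamma_i\cdot C_j=1$, each $\Gamma_i$ passes through its base points along a perfect matching of the indices $\{1,2,3,4\}$. Reading the normal data $\co_X(\Gamma_i)|_{C_j}=\co_{C_j}(p_{j,l})=\co_X(C_l)|_{C_j}$ at each base point and invoking the injectivity of $i_{j*}$ on $Tor(X)$ (Claim \ref{cl5}) forces relations of the form $i_{j*}(E)\in\langle i_{j*}(\alpha_1)\rangle$, i.e.\ $i_{j*}(E)$ becomes $5$-torsion on several $C_j$ simultaneously.

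The main obstacle is to turn these forced torsion restrictions into a contradiction, and the case analysis by the residue $r\bmod 5$ (which governs whether the base locus is four distinct points or two doubled points) is where the real work lies. I expect to close it by showing that ``$i_{j*}(E)$ is $5$-torsion'' cannot hold compatibly across the matchings carried by $\Gamma_1$ and $\Gamma_2$ — equivalently, that a root $E$ of the $-E_8$ lattice cannot restrict to a torsion class on enough of the $C_j$ — using the rigidity of $E$ ($E^2=-2$) together with the \'etale $\mathbb Z_5$-cover $\varphi\colon Q\to X$, under which the $C_j$ pull back to coordinate hyperplane sections of the quintic (so that a would-be low-degree elliptic component on $Q$ is excluded by adjunction). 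This opposite-sign exclusion is the crux; everything preceding it is essentially formal.
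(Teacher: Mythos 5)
Your formal reductions are correct and coincide with the paper's: Riemann--Roch gives $(ii)$ and $(iv)$; Serre duality plus ampleness of $K_X$ gives $h^2(D_j)=h^0(-E_j)=0$, so good $=$ not bad; any effective $\Gamma\in|K_X\pm E_j+\alpha|$ is irreducible and reduced because $(\Gamma,K_X)_X=1$; and two bad elements of the same sign in $\mathcal D_l$ are excluded by $\Gamma_1\cdot\Gamma_2=(K_X+E)^2=-1<0$. The count $1200-120=1080$ is also right. All of this matches the paper's proof of Claim \ref{cl4} step for step.

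The genuine gap is exactly the step you yourself flag as unresolved: excluding two bad elements of \emph{opposite} sign, $\Gamma_1\in|K_X+E|$ and $\Gamma_2\in|K_X-E+\alpha|$, where $\Gamma_1\cdot\Gamma_2=3\geq 0$ and intersection theory on $X$ alone gives nothing. Your proposed substitute --- the pencil $|2K_X+\beta|$, base-locus matchings, and torsion restrictions via Claim \ref{cl5} --- is a sketch, not an argument, and it is doubtful it closes as stated: the base locus of that pencil involves only the two curves $C_a,C_b$ (not a ``perfect matching of the indices $\{1,2,3,4\}$''), and Claim \ref{cl5} controls only the restriction of \emph{torsion} classes to $C_j$, so it cannot by itself convert statements about $i_{j*}(E)$ for the non-torsion root $E$ into a contradiction. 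The paper's actual mechanism is different and is the heart of the whole claim: pull $M_1\cup M_2\in|2K_X+\alpha|$ back along the free $\mathbb Z_5$-cover $\varphi:Q\to X$; the torsion dies, so $\widetilde D=\varphi^{-1}(M_1\cup M_2)\in|2K_Q|$ is cut on the quintic $Q\subset\mathbb P^3$ by a $G$-invariant quadric $S$. Setting $\widetilde D_i=\varphi^{-1}(M_i)$, one has $(\widetilde D_1,\widetilde D_2)_Q=5(M_1,M_2)_X=15$, and a local tangent-plane argument shows this intersection number can be recomputed inside $S$. The class arithmetic on $S\simeq\mathbb P^1\times\mathbb P^1$ (or on $\mathbb F_2$ when $S$ is a cone; the two-plane case is excluded separately) then forces one of the $\widetilde D_i$ to be a disjoint union of three fibers, which cannot be a $\mathbb Z_5$-orbit whose quotient is the irreducible curve $M_i$ --- contradiction. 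Without this lifting-to-the-quintic argument, or a completed replacement for it, the assertion that each $\mathcal D_l$ contains at most one bad element remains unproven, and with it the bound $1080$.
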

\proof We have $(D_{j}^2)_X=-1$ for $1\leq j\leq 1200$ and $(D_{j},K_X)_X=(D_{j},C_i)_X=1=g(C_i)-1$ for $1\leq i\leq 4$, and by Riemann-Roch Theorem, $\chi(\mathcal O_X(D_j))=0$.

Note that $\dim H^0(X,\mathcal O_X(E_j))=0$ for  $1\leq j\leq 1200$, since the canonical class of $X$ is ample. Therefore, by Serre's duality, $H^2(X,\mathcal O_X(D_j))=0$. Also, note that if $\dim H^0(X,\mathcal O_X(K_X+E_j))\geq 1$, then $M\in |K_X+E_j|$ is an irreducible reduced curve, since, first, $(M,K_X)_X=1$ and, second, $K_X$ is an ample divisor and therefore $(M,K_X)_X>0$ for each curve $M\subset X$.
Therefore to complete the proof of Claim \ref{cl4}, 
it is sufficient to show that if 
$\dim H^0(X,\mathcal O_X(K_X+E_j))\geq 1$, then  $\dim H^0(X,\mathcal O_X(K_X-E_j+\alpha))=0$ for $\alpha\in Tor(X)$ and
$\dim H^0(X,\mathcal O_X(K_X+E_j+\alpha))=0$ for $\alpha\in Tor(X)$,   $\alpha\neq 0$.

Assume that $\dim H^0(X,\mathcal O_X(K_X+E_j))\geq 1$ and  $\dim H^0(X,\mathcal O_X(K_X+E_j+\alpha))\geq 1$ for some
$E_j$ and $\alpha\in Tor(X)$, $\alpha\neq 0$, and let $M_1\in |K_X+E_j|$ and $M_2\in |K_X+E_j+\alpha|$ be two curves in $X$.
Then $M_1$ and $M_2$ are two different irreducible curves such that $(M_1,M_2)_X=(M_1^2)_X=-1$. Contradiction.

Assume that $\dim H^0(X,\mathcal O_X(K_X+E_j))\geq 1$ and  $\dim H^0(X,\mathcal O_X(K_X-E_j+\alpha))\geq 1$ for some
$E_j$, and let $M_1\in |K_X+E_j|$ and $M_2\in |K_X-E_j+\alpha|$ be two curves in $X$. Then $M_1$ and $M_2$ are two different irreducible curves such that $(M_1,M_2)_X=3$.  Consider the curve $\widetilde D=\varphi^{-1}(M_1\cup M_2)\subset Q$, where $Q$ is a smooth quintic in $\mathbb P^3$ and $\varphi:Q\to X$ is the universal covering. Then $\widetilde D\in |2K_Q|$, since $M_1\cup M_2\in |2K_X+\alpha|$ and therefore $\widetilde D=Q\cap S$, where $S$ is  a quadric in $\mathbb P^3$ invariant under the action of $G$. Note that if $S$ is a cone then $Sing S\not\in Q\cap S$, since $Sing S$ is a single point and the action of $G$ on $Q$ is free from fixed points. Note also that $S$ is not the union of two planes $P_1$ and $P_2$, since overwise this planes must be invariant under the action of $G$ and hence $P_i\cap Q=\varphi^{-1}(C_{j(i)})$, where $C_{j(i)}$ is a curve numerically equivalent to $K_X$.

Denote $\widetilde D_i=\varphi^{-1}(M_i)$, $i=1,2$. Then
\begin{equation}\label{e1}(\widetilde D_1,\widetilde D_2)_Q=5(M_1,M_2)_X=15. \end{equation}
Let us show that $(\widetilde D_1,\widetilde D_2)_Q= (\widetilde D_1,\widetilde D_2)_S$. For this, consider any point $p\in \widetilde D_1\cap\widetilde D_2$. The surfaces $Q$ and $S$ have a common tangent plane $P$ at $p$, since $Q\cap S$ is singular at $p$. Let us choose non-homogeneous coordinates $(x,y,z)$ in $\mathbb A^3\subset \mathbb P^3$ such that $p=(0,0,0)$ and $z=0$ is an equation of $P$, and consider an irreducible component $J$ of the germ $(\widetilde D_1,p)$ (resp., $\widetilde D_2$) of $\widetilde D_1$ (resp., $\widetilde D_2$) at $p$. The germ $J$ can be given parametrically by $x=h_1(t)$, $y=h_2(t)$, and $z=h_3(t)$, where $h_i(t)\in \mathbb C[[t]]$. The functions $x$ and $y$ are local coordinates at $p$ in both $Q$ and $S$, since $\{ z=0\}$ is the tangent plane to $Q$ and $S$ at $p$. Therefore $J$ is given in $Q$ and in $S$ by the same parametrisation $x=h_1(t)$, $y_1=h_2(t)$ and hence the intersection number $(\widetilde D_1,\widetilde D_2)_p$ of the curves $\widetilde D_1$ and $\widetilde D_2$ at $p$ is the same in the cases if we consider the curves $\widetilde D_1$ and $\widetilde D_2$ as the curves lying in $Q$ or lying in $S$.

Let $S\simeq \mathbb P^1\times \mathbb P^1$ be a smooth quadric. The group $Pic(S)=\mathbb Z^2$ is generated by $L_1$ and $L_2$, $(L_1^2)_S=(L_2^2)_S=0$, and $(L_1,L_2)_S=1$, where $L_1$ and $L_2$ are fibres of two projections of $S$ to $\mathbb P^1$. We have $\widetilde D\in |5L_1+5L_2|$. Let $\widetilde D_1\in |mL_1+nL_2|$, $m,n\geq 0$. Then $\widetilde D_2\in |(5-m)L_1+(5-n)L_2|$, $m,n\leq 5$. Therefore
$(\widetilde D_1,\widetilde D_2)_S= 5(m+n)-2mn$ and, by (\ref{e1}), we have
$$5(m+n)-2mn=15.$$
Therefore $mn$ is divisible by $5$ and hence $(m,n)\in \{ (0,3),(3,0), (5,2), (2,5)\}$, and, without loss of generality, we can assume that $(m,n)=(0,3)$, that is, the curve $\widetilde D_1$ is equal to the union $L_{2,1}\cup L_{2,2}\cup L_{2,3}$, where $L_{2,j}\in |L_{2}|$, $j=1,2,3$, are three pairwise different curves, since $\widetilde D_1$ is a reduced curve. But, the curve $\widetilde D_1$ is invariant under the action of $G$. Therefore this case is impossible, since the disjoint union of these three curves can not be the orbit of curves under the action of the group $G=\mathbb Z_5$, since the factor-space of this orbit is the irreducible curve $M_1$.

Let $S$ be a cone and $\tau:\mathbb F_2\to S$ the resolution of the singular point of $S$, $\tau^{-1}(Sing(S))=R$. The group $Pic(\mathbb F_2)$  is generated by $R$ and $L$, $(R^2)_{\mathbb F_2}=-2$, $(R,L)_{\mathbb F_2}=1$, $(L^2)_{\mathbb F_2}=0$, where $L$ is a fibre of the projection of $\mathbb F_2$ to $R$. We have $\widetilde D\in |5R+10L|$. Let $\widetilde D_1\in |mR+nL|$, $m,n\geq 0$. Then $\widetilde D_2\in |(5-m)R+(10-n)L|$, $m\leq 5$ and $n\leq 10$ and, by (\ref{e1}), we have
$$(\widetilde D_1,\widetilde D_2)_{\mathbb F_2}= -2m(5-m)+n(5-m)+m(10-n)=m^2+5n-2mn=m(m-2n)+5n$$
and hence
\begin{equation}\label{e2}
m(m-2n)+5n=15,\qquad 0\leq m\leq 5,\quad 0\leq n\leq 10.\end{equation}
It is easy to check (if to note that $m(m-2n)$ is divisible by $5$) that if $(m,n)$ is an integer solution of (\ref{e2}), then $(m,n)\in \{ (0,3),(5,2)\}$, that is, either $\widetilde D_1\in |3L|$ if $(m,n)=(0,3)$, or $\widetilde D_2\in |8L|$ if $(m,n)=(5,2)$. As in the case when $S$ is smooth, it is easy to see that the case, when $S$ is a cone, is also impossible. \qed

\begin{claim} \label{cl6} Let $D=K_X+E \in\mathcal D_l$ be such that $D$ is not a bad element of $\mathcal D_l$ and let  $\dim H^0(X,\mathcal O_X(K_X+D+\alpha))=2$ for some  $\alpha\in Tor(X)$, $\alpha\neq 0$.  Then $\mathcal O_{C_j}(E)=\mathcal O_{C_j}$, where $C_j\in |K_X+\alpha|$.
\end{claim}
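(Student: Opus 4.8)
The plan is to reduce everything to the restriction of a single line bundle to the genus two curve $C_j$ and then to exploit the rigidity of line bundles on a genus two curve. Since $C_j\in |K_X+\alpha|$, we have $\mathcal O_X(C_j)\simeq \mathcal O_X(K_X+\alpha)$, so as divisor classes
$$K_X+D+\alpha\sim D+C_j\sim 2K_X+E+\alpha ,$$
and the hypothesis $\dim H^0(X,\mathcal O_X(K_X+D+\alpha))=2$ becomes $\dim H^0(X,\mathcal O_X(D+C_j))=2$. First I would write the restriction sequence
$$0\rightarrow \mathcal O_X(D)\rightarrow \mathcal O_X(D+C_j)\rightarrow \mathcal O_{C_j}\bigl((D+C_j)|_{C_j}\bigr)\rightarrow 0 .$$
Because $D$ is not a bad element of $\mathcal D_l$, by the very definition of ``bad'' we have $H^0(X,\mathcal O_X(D))=0$; hence the map on global sections is injective and $\dim H^0\bigl(C_j,(D+C_j)|_{C_j}\bigr)\ge 2$.

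Next I would compute the degree of this restricted bundle. As $C_j$ is numerically equivalent to $K_X$ and $(E,K_X)_X=0$, one gets $(D,C_j)_X=(K_X+E,K_X)_X=1$, and together with $(C_j^2)_X=1$ this gives $\deg\bigl((D+C_j)|_{C_j}\bigr)=(D+C_j,C_j)_X=2$. The key observation is then that on a curve of genus two a degree two line bundle $L$ with $h^0(L)\ge 2$ is forced to be the canonical bundle: Riemann--Roch gives $h^0(L)-h^1(L)=2-2+1=1$, so $h^1(L)\ge 1$, and Serre duality yields $h^0(K_{C_j}-L)=h^1(L)\ge 1$; since $\deg(K_{C_j}-L)=0$ a degree zero bundle with a nonzero section is trivial, whence $L\simeq K_{C_j}$. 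Applying this to $L=(D+C_j)|_{C_j}$ gives $(D+C_j)|_{C_j}\simeq K_{C_j}$.

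Finally I would invoke adjunction. Using $C_j\sim K_X+\alpha$, the adjunction formula gives
$$K_{C_j}\simeq (K_X+C_j)|_{C_j}\simeq (2K_X+\alpha)|_{C_j},$$
whereas by construction $(D+C_j)|_{C_j}\simeq (2K_X+E+\alpha)|_{C_j}$. Comparing the two descriptions of the same bundle $K_{C_j}$ cancels $(2K_X+\alpha)|_{C_j}$ and leaves $\mathcal O_{C_j}(E)\simeq \mathcal O_{C_j}$, which is the claim. The argument is short once the genus two input is available, so I do not expect a serious obstacle; the only points needing care are that $H^0(X,\mathcal O_X(D))=0$ is exactly the meaning of ``$D$ not bad'', and that one must keep track of where only numerical equivalence $C_j\equiv K_X$ is used (the intersection numbers) versus where the precise linear equivalence $C_j\sim K_X+\alpha$ is essential (the passage to $D+C_j$ and the adjunction step).
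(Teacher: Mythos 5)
Your proof is correct and follows essentially the same route as the paper's: restrict $\mathcal O_X(D+C_j)$ to $C_j$ via the standard exact sequence, use non-badness of $D$ to push the hypothesis $h^0(X,\mathcal O_X(K_X+D+\alpha))=2$ down to the genus-two curve, conclude via the uniqueness of $K_{C_j}$ among degree-two classes with two sections, and finish by adjunction. The only (harmless) difference is that you use just injectivity of the restriction map on sections, coming from $H^0(X,\mathcal O_X(D))=0$, whereas the paper asserts the map $i_*$ is an isomorphism (which also relies on $H^1(X,\mathcal O_X(D))=0$ from Claim \ref{cl4}); your variant is marginally more economical and equally valid.
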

\proof  In the exact sequence
$$\begin{array}{c} 0\to H^0(X,\mathcal O_X(K_X+E))\to H^0(X,\mathcal O_X(K_X+C_j+E))\stackrel{i_{*}}{\rightarrow} \\ H^0(C_j,\mathcal O_{C_j}(K_{C_j}+E))\to   H^1(X,\mathcal O_X(K_X+E))\to \dots \end{array}$$
$i_{*}$ is an isomorphism, since $K_X+E$ is not a bad element. Therefore $\mathcal O_{C_j}(K_{C_j}+E))=\mathcal O_{C_j}(K_{C_j}))$ (and hence $\mathcal O_{C_j}(E)=\mathcal O_{C_j}$), since
$\dim H^0(X,\mathcal O_X(K_X+D+\alpha))=2$ and on the curve $C_j$ of genus $g(C_j)=2$ there is only one divisor $M$ of degree two, namely $K_{C_j}$, for which $\dim H^0(C_j,\mathcal O_{C_j}(M)=2$. \qed

\begin{claim} \label{cl7} Let $D=K_X+E \in\mathcal D_l$  and $D_1=K_X+E +\beta \in\mathcal D_l$, $\beta\in Tor(X)$, $\beta\neq 0$, be such that neither $D$, nor $D_1$ is a bad element of $\mathcal D_l$. Asume that  $$\dim H^0(X,\mathcal O_X(K_X+D+\alpha))=2$$ for some  $\alpha\in Tor(X)$, $\alpha\neq 0$.  Then $\dim H^0(X, \mathcal O_{X}(K_X+D_1+\alpha)\leq 1$.
\end{claim}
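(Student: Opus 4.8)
The plan is to restrict the linear system of $L=K_X+D_1+\alpha$ to the unique curve $C_j\in|K_X+\alpha|$ and then exploit the rigidity of degree-two line bundles on a genus-two curve. First I would invoke Claim \ref{cl6}: since $D=K_X+E$ is not a bad element of $\mathcal D_l$ and $\dim H^0(X,\mathcal O_X(K_X+D+\alpha))=2$ by hypothesis, Claim \ref{cl6} yields $\mathcal O_{C_j}(E)=\mathcal O_{C_j}$, where $C_j$ is the unique member of $|K_X+\alpha|$ (unique because $\alpha\neq 0$ and $\dim H^0(X,\mathcal O_X(K_X+\alpha))=1$ by Claim \ref{cl2}, so $C_j$ is one of the smooth genus-two curves of that claim).

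Next, writing $L=K_X+D_1+\alpha=2K_X+E+\beta+\alpha$, I would use the restriction sequence
$$0\to \mathcal O_X(L-C_j)\to \mathcal O_X(L)\to \mathcal O_{C_j}(L)\to 0.$$
Here $L-C_j=D_1$, and since $D_1$ is not a bad element we have $H^0(X,\mathcal O_X(D_1))=0$; the sequence then produces an injection $H^0(X,\mathcal O_X(L))\hookrightarrow H^0(C_j,\mathcal O_{C_j}(L))$. It therefore suffices to bound the dimension of the restricted space.

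I would then identify $\mathcal O_{C_j}(L)$ via adjunction. Because $C_j\in|K_X+\alpha|$, the adjunction formula gives $K_{C_j}=\mathcal O_{C_j}(K_X+C_j)=\mathcal O_{C_j}(2K_X+\alpha)$, whence $\mathcal O_{C_j}(L)=K_{C_j}\otimes\mathcal O_{C_j}(E)\otimes\mathcal O_{C_j}(\beta)=K_{C_j}\otimes\mathcal O_{C_j}(\beta)$, using $\mathcal O_{C_j}(E)=\mathcal O_{C_j}$ from the first step. This is a line bundle of degree two on the genus-two curve $C_j$ (the twist by the torsion class $\beta$ being numerically trivial). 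As in the argument already used in Claim \ref{cl6}, a degree-two line bundle $M$ on a genus-two curve satisfies $\dim H^0(C_j,M)=2$ exactly when $M\simeq K_{C_j}$, and $\dim H^0\le 1$ otherwise; so $\dim H^0(C_j,\mathcal O_{C_j}(L))=2$ would force $\mathcal O_{C_j}(\beta)=\mathcal O_{C_j}$.

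Finally I would rule this out using Claim \ref{cl5}: the restriction map $i_{j*}:Tor(X)\to Pic(C_j)$ is injective, so $\beta\neq 0$ gives $\mathcal O_{C_j}(\beta)\neq\mathcal O_{C_j}$, hence $\dim H^0(C_j,\mathcal O_{C_j}(L))\le 1$. Combined with the injection from the restriction sequence, this gives $\dim H^0(X,\mathcal O_X(K_X+D_1+\alpha))\le 1$, as claimed. The only point requiring genuine care, rather than routine computation, is the clean identification $\mathcal O_{C_j}(L)=K_{C_j}\otimes\mathcal O_{C_j}(\beta)$ and the observation that the two previously established facts—the triviality $\mathcal O_{C_j}(E)=\mathcal O_{C_j}$ from Claim \ref{cl6} and the injectivity of the torsion restriction from Claim \ref{cl5}—together pin down the restriction to be $K_{C_j}$ twisted by a \emph{nontrivial} degree-zero bundle; once this is in place, the genus-two dichotomy closes the argument with no real obstacle.
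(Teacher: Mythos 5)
Your proof is correct and follows essentially the same route as the paper: the paper simply applies Claim \ref{cl6} a second time to $D_1=K_X+(E+\beta)$ to get $\mathcal O_{C_j}(E+\beta)=\mathcal O_{C_j}$, hence $\mathcal O_{C_j}(\beta)=\mathcal O_{C_j}$, contradicting Claim \ref{cl5}. Your restriction-sequence and adjunction computation just re-derives that second application of Claim \ref{cl6} explicitly, so the two arguments are the same in substance.
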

\proof Let $C_j\in |K_X+\alpha|$. By Claim \ref{cl6}, $\mathcal O_{C_j}(E)=\mathcal O_{C_j}(E+\beta)=\mathcal O_{C_j}$ if $$\dim H^0(X, \mathcal O_{X}(K_X+D_1+\alpha)\geq 2.$$ But, then $\mathcal O_{C_j}(\beta)=\mathcal O_{C_j}$ and we obtain the contradiction with Claim \ref{cl5}. \qed

Now, to complete the proof of Theorem \ref{T:example}, it suffices to apply Claims \ref{cl4} and \ref{cl7}. \qed\\

Let $(X_1,C_1,\mathcal F_1)$ and $(X_2,C_2.\mathcal F_2)$ be two pre-spectral data with  smooth surfaces $X_1$ and $X_2$.
We say that two pre-spectral data $(X_1,C_1,\mathcal F_1)$ and $(X_2,C_2,\mathcal F_2)$ are {\it strongly deformation equivalent} if there are a smooth complex threefold
$\mathcal X$, an effective divisor $\mathcal C$ and a sheaf $\mathcal F$ on $\mathcal X$, and a proper holomorphic surjection $p:\mathcal X\to \Delta=\{ z\in \mathbb C \, \, |\,\, |z|<1\}$ such that $X_1$ and $X_2$ are fibres of $p$ and $(X_t=p^{-1}(t), C_t=(X_t,\mathcal C)_{\mathcal X}, \mathcal F_t=\mathcal F_{|X_t})$ are pre-spectral data for  each $t\in \Delta$. Extend the strong deformation equivalence to an equivalence relation on the set of pre-spectral data with  smooth surfaces. Similar to, so called,  $"DIF=DEf"$-problems
in algebraic geometry (see, for example, \cite{Man}, \cite{K-H1}, \cite{K-H2}), we have a  $"DIF=DEf"$-problem for pre-spectral data if we fix a diffeomorphic type of a surface $X$. In particular, there is the following
\begin{que} 
How many deformation equivalence classes of pre-spectral data with smooth Godaux surfaces from Theorem {\rm \ref{T:example}} do exist{\rm ?}
\end{que}

\ifx\undefined\bysame
\newcommand{\bysame}{\leavevmode\hbox to3em{\hrulefill}\,}
\fi

\vspace{0.5cm}

\noindent Vik. S. Kulikov,  Steklov Mathematical Institute of Russian Academy of Sciences, Gubkina 8, Moscow, Russia
\newline e-mail
	$kulikov@mi.ras.ru$

\vspace{0.3cm}

\noindent A. Zheglov,  Lomonosov Moscow State  University, faculty
of mechanics and mathematics, department of differential geometry
and applications, Leninskie gory, GSP, Moscow, \nopagebreak 119899,
Russia
\\ \noindent e-mail
 $azheglov@mech.math.msu.su$

\end{document}